\theoremstyle{plain} %text of this environment is typesetted in italics
\newtheorem{theorem}{Theorem}[section]
\newtheorem{lemma}[theorem]{Lemma}
\newtheorem{proposition}[theorem]{Proposition}
\theoremstyle{definition} %text of this environment is typesetted in roman letters
\newtheorem{definition}[theorem]{Definition}
\newtheorem{remark}[theorem]{Remark}
\newcommand{\Wcal}{\mathcal{T}}
\newcommand{\R}{\mathbb{R}}
\newcommand{\fF}{\mathfrak{F}}
\begin{document}
\begin{titlepage}

\begin{title}
{Biplanar Foldings}
\end{title}

\author{
Matthias Weber
}
\address{Matthias Weber\\Department of Mathematics\\Indiana University\\
Bloomington, IN 47405
\\USA}
\author
{Jiangmei Wu}
\address{Jiangmei Wu\\School of Art, Architecture, and Design\\Indiana University\\
Bloomington, IN 47405
\\USA}

\date{\today}

\begin{abstract}
We  introduce a general geometric framework for the construction  of  polyhedra and polyhedral complexes that   are {\em bifoldable}, i.e. foldable into two different planes.
This vastly generalizes origami folds known as the Miura pattern, the origami tube and the Eggbox pattern. Our polyhedra are generalized zonohedra  based on 1-parameter families stars of vectors in $\R^3$ that deform in specific ways while the polyhedra are folded. After describing the framework, its basic features and the general design process, we give several new examples of infinite doubly periodic, triply periodic and fractal bifoldable polyhedra.  
\end{abstract}

\maketitle

\end{titlepage}
\section{Introduction}

	In recent years, origami has found a wide range of applications in material sciences, engineering, cell biology, art and other areas. At a micro scale, origami design has been used in medical devices such as heart stents \cite{Kuribayashi2006}. At a large scale, origami design has been applied in art installation \cite{Wu2018}, aerospace \cite{Barbarino2011}, and architectural facade \cite{Grosso2010}. In more recent years, origami has also been used to create programmable systems that can change shape, functionality and material properties \cite{Hawkes2010}. While many of these applications are concerned with origami that is folded from a flat sheet material to create three-dimensional depth, a few origami researchers have been focusing on creating origami tubes or honeycomb-like origami structures, such as foldable   polyhedral origami that can be repeated periodically in space. Such origami structures can be used as inflatable and deployable structures \cite{Schenk2014} or as metamaterials \cite{filipov2015}. 

\medskip

Several of these structures,  have the remarkable feature that they can be folded together in {\em two different} ways into perpendicular planes.
However, most of the examples given in the literature are general polyhedral complexes, i.e. have edges where more than two facets meet. For construction purposes, it is often desirable to have purely polyhedral structures. Moreover, the fundamental geometric construction methods that are used to generate these origami structures are somewhat limited and appear to be coincidental. Many of the resulting honeycomb origami structures are closely related to Miura folds \cite{Schenk2013} or space-filling polyhedra \cite{Tachi2012}.

We address these two issues by first describing a simple mathematical framework (Section \ref{sec:star}) that explains the bifoldability and allows for a very flexible construction method of general  bifoldable complexes ($\Sigma$-comoplexes) in which no more than two facets meet at any given edge. 
In section \ref{sec:vertices} we list all fourteen types of polyhedral vertices  that can occur in our construction.
After reviewing the basic examples (Section\ref{sec:simple}),  we  illustrate our method to construct a bifoldable fractal (Section \ref{sec:fractal}).
We briefly discuss the design process to build more complicated examples in Section \ref{sec:design}. We then demonstrate this method in Section  \ref{sec:weave} to construct a doubly periodic Miura Weave.   In Section \ref{sec:links} we discuss simple examples of triply periodic bifoldable surfaces with dodecahedral cavities, and in Section \ref{sec:dosequis} we construct a triply periodic Miura pattern. 

We believe our construction method of bifoldable complexes will have implications in the designing and building of smart metamaterials, air or hydraulic filtration systems, robots, large-scale inflatable structures, breathable architectural skins, and many more.  

\medskip

Initially, our investigation began with the question when polygonal approximations of classical triply periodic minimal surfaces can be folded into a plane. A simple and well known example is the mucube \cite{coxeter1938}, an approximation of the P-surface of Schwarz. After discovering Theorem \ref{thm:key}, the focus quickly shifted away from minimal surfaces. In fact, some of our examples like the Fractal are distinctively non-minimal (minimal surfaces have quadratic area growth in balls). On the other hand, we observed three paradigms that triply periodic minimal surfaces and  bifoldable polyhedra have in common: First, there are examples where triply periodic minimal surfaces can be collapsed into different planes through a minimal (albeit not isometric) deformation. Secondly, the flexibility one has in constructing examples is similar. Lastly, certain topological paradigms occur in both situations: The Link has a vague resemblance to Scherk's singly periodic surface, one can create a bifoldable version of Scherk's doubly periodic surface, and the surfaces discussed in section \ref{sec:dosequis} can be modified to resemble the doubly periodic Karcher-Meeks-Rosenberg surfaces.

\section{Stars and Foldings}\label{sec:star}

In this section, we introduce a general framework to construct a class of bifoldable Euclidean polyhedral complexes that we call $\Sigma$-complexes.
To make this notion rigorous, we introduce the following concepts.

\begin{definition}
A (geometric) polyhedral complex in $\R^n$ is a locally finite set $P$ of closed convex polytopes in $\R^n$
such that 
\begin{itemize}
\item with each polytope, all of its facets belong to the set;
\item the intersection between two polytopes is either empty or is a facet of both of them;
\item as a topological space, $P$ is connected.
\end{itemize}
The dimension $d$ of a polyhedral complex is the maximal dimension of its polytopes. We then speak of polyhedral $d$-complexes.
\end{definition}

We will only consider 2- and 3-dimensional polyhedral complexes in $\R^3$.

\begin{definition}
A  polyhedral  2-complex in $\R^3$  is {\em biplanar} if all its edges lie  in  one of the two  orthogonal planes which we will assume to be the two vertical coordinate planes of $\R^3$.
\end{definition}

\begin{definition}
A {\em folding}  is a continuous 1-parameter family of  polyhedral 2-complexes $C_t$ so that the facets remain congruent.
A folding {\em collapses} into a plane for $t\to t_0$  if  the limits of all vertices of $C_t$ exist and lie in that plane.
\end{definition}

Our goal is to construct  biplanar polyhedral foldings that collapse at the end points of the parameter interval into the two vertical coordinate planes.

The polyhedral complexes we consider are  generalizations of {\em zonohedra} \cite{coxeter1973}
and are based on the concept of a {\em 4-star}:

\begin{definition}
A 4-star $\Sigma$  is a set of four vectors $v_1,\ldots v_4\in \R^3$ so that no three of them are linearly dependent. A  polyhedral  complex is {\em based} on a star $\Sigma$ if all of its edge vectors lie in $\Sigma$, up to orientation.
\end{definition}

\begin{figure}[H]
   \centering
   \includegraphics[width=3in]{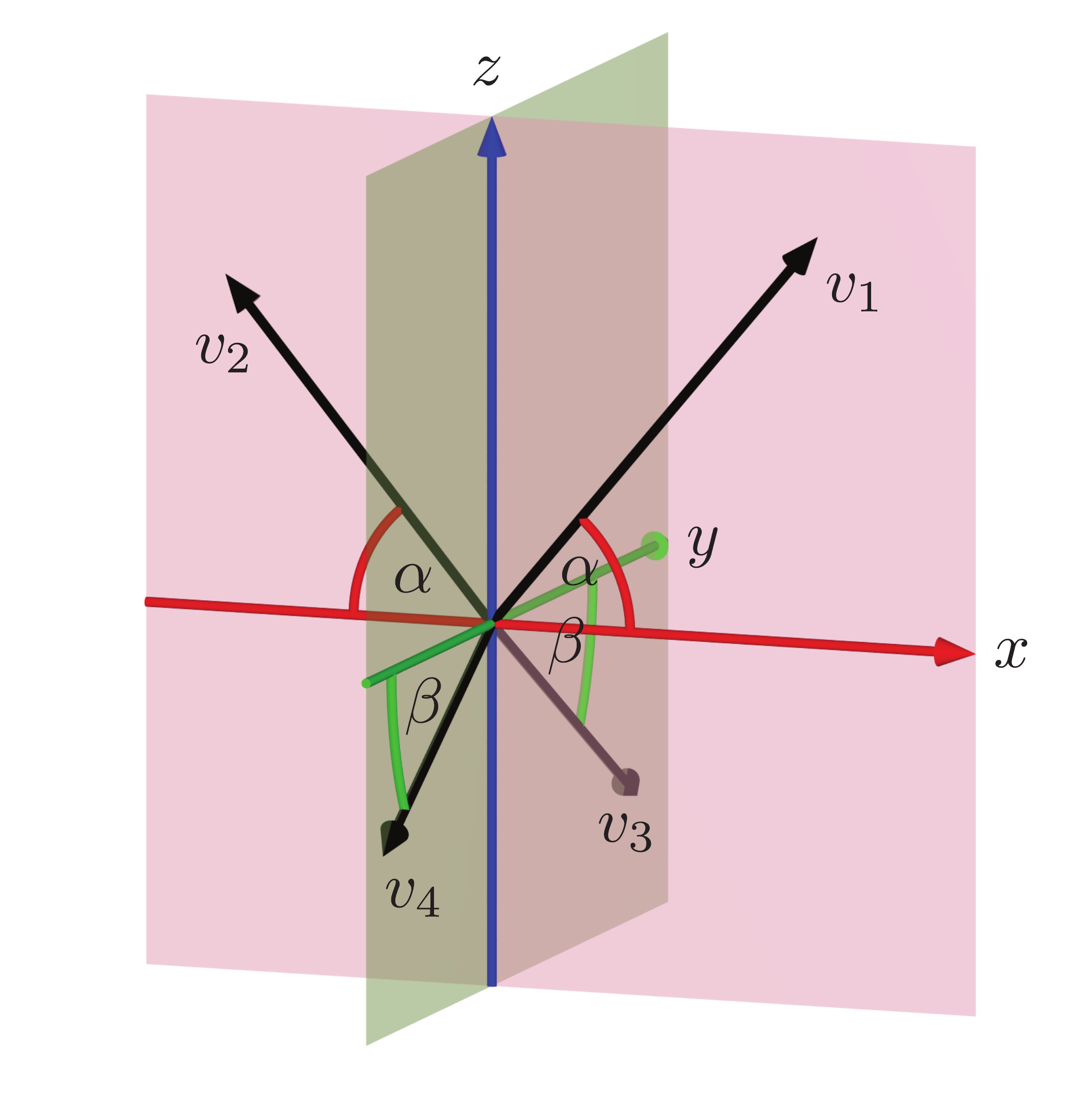}
   \caption{The star $\Sigma$}
   \label{fig:star}
\end{figure}

The most regular star is the {\em tetrahedral star},  given  by 
\begin{align*} 
   v_1 = {}&  (1,1,1)\\
   v_2 = {}&  (1,-1,-1)\\
   v_3 = {}&  (-1,1,-1)\\
   v_4 = {}&  (-1,-1,-1) \ .
\end{align*}
These are the vectors that point from the center to the vertices of a regular tetrahedron placed inside a cube.

There is an abundance of polyhedral complexes based on any given star. To see this, recall that the rhombic dodecahedron tiles space and itself can be tiled in two different ways by four congruent rhomboids. Subdividing each  rhombic dodecahedron of the space tiling one way or the other into four rhomboids gives an infinitude of polyhedral 3-complexes whose 2-facets form polyhedral 2-complexes based on $\Sigma$.
This construction works in fact for any star, not only the tetrahedral star. There are, of course, many more examples, some of which we will encounter below.

We call the six parallelograms $\Pi_{ij}$ (or translational copies of them) that are spanned by $v_i$ and $v_j$,  $i<j$, the {\em facets}.
If a polyhedral complex is based on a star, its facets are necessarily one of these six facets. 
We also denote the plane spanned by $v_i$ and $v_j$ by $v_i\wedge v_j$. The parallelepiped spanned by three different star vectors $v_i$, $v_j$ and $v_k$ will be denoted by $R_{ijk}$.

Our goal is to deform a polyhedral complex that is based on a star by deforming the star. Here we will face two difficulties: The short time {\em existence problem} of deformations, and the long time {\em embeddedness problem} that during a deformation the polyhedral complexes might self-intersect.

We address the existence problem by introducing a homological condition:

\begin{definition}
Let $\eta$ be a path of edges in a polyhedral complex based on a star. We represent this path as a sequence of edges $(\pm v_{i_1},\ldots, \pm v_{i_n})$, where the sign indicates the orientation of the edge. Such a path $\eta$ is closed if $\sum_{k=1}^n \pm v_{i_k} = 0$. We say that $\eta$ is {\em generically closed} if each $v_i$ occurs in $\eta$ equally often with positive and negative orientation. This makes the path closed not only for the star $\Sigma$ but for any choice of star. We call a polyhedral complex {\em generic} if every closed path is generically closed.
\end{definition}

Then we have:

\begin{lemma} 
Suppose a finite polyhedral complex $P$ is generically based on a star $\Sigma$. Let $\Sigma_t$ be a continuous deformation of $\Sigma$. Then, for $t$ small enough, there is a continuous family $P_t$ of polyhedral complexes generically based on $\Sigma_t$.  
\end{lemma}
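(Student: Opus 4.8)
The plan is to build the deformed complex $P_t$ by prescribing the positions of its vertices and then attaching the facets tautologically. I fix a base vertex $p_0$ of $P$ and set $p_0(t)=p_0$. Since $P$ is connected, every vertex $p$ is joined to $p_0$ by a path of edges $\eta=(\pm v_{i_1},\ldots,\pm v_{i_n})$, and the only candidate compatible with "every edge $\pm v_i$ of $P$ becomes the edge $\pm v_i(t)$ of $P_t$" is
\[
p(t)=p_0+\sum_{k=1}^{n}\bigl(\pm v_{i_k}(t)\bigr),
\]
where $v_i(t)$ denotes the $i$-th vector of $\Sigma_t$. Everything hinges on showing this is independent of the chosen path.

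Two paths from $p_0$ to $p$ differ by a closed path, so well-definedness amounts to proving that $\sum_k(\pm v_{i_k}(t))=0$ for every closed path $\eta$. Collecting terms by index, a closed path contributes $\sum_{i=1}^{4}(n_i^{+}-n_i^{-})\,v_i(t)$, where $n_i^{\pm}$ counts the signed occurrences of $v_i$ in $\eta$. This is exactly where the genericity hypothesis is decisive: it forces $n_i^{+}=n_i^{-}$ for each $i$, so the sum vanishes for \emph{any} star, in particular for $\Sigma_t$. I would stress why closedness for $\Sigma$ alone is insufficient. The four star vectors satisfy a single linear relation $\sum_i c_i v_i=0$ (the relation space is one-dimensional precisely because no three of them are dependent), so a path closed for $\Sigma$ only forces $(n_i^{+}-n_i^{-})$ to be a multiple of $(c_1,\ldots,c_4)$. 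A non-generically-closed path would use this accidental closure and would fail to close for a deformed star whose relation has moved; genericity removes exactly this obstruction, and it is the only place the hypothesis enters.

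With the vertex map $p\mapsto p(t)$ in hand, it is continuous in $t$ (a finite sum of continuous vectors), restricts to the identity at $t=0$, and carries each edge $\pm v_i$ to $\pm v_i(t)$ by construction. I then define $P_t$ by declaring its vertices to be the points $p(t)$ and its facets to be the parallelograms spanned by the deformed edge vectors; since the boundary of each facet is a closed path of the form $(v_i,v_j,-v_i,-v_j)$, which is generically closed, these parallelograms indeed close up, and $P_t$ is based on $\Sigma_t$. Moreover the closed paths of $P_t$ are combinatorially identical to those of $P$, hence still generically closed, so $P_t$ is generic.

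Finally I would verify that $P_t$ is an honest polyhedral complex for $t$ small, which is where "for $t$ small enough" is used, and I do not expect this to be the main difficulty. Because $\Sigma_0$ is a star, no three of its vectors are dependent, so by continuity the same holds for $\Sigma_t$ when $t$ is small, and every facet $\Pi_{ij}(t)$ stays a non-degenerate parallelogram. Since $P$ is finite, its facets are finitely many compact sets: two facets meeting in a common face at $t=0$ continue to share exactly that (combinatorially unchanged) face, while two disjoint facets are a positive distance apart and stay disjoint under a sufficiently small perturbation, so the intersection and closure axioms persist. The main obstacle is thus conceptual rather than computational, namely recognizing that the homological genericity condition is precisely what makes the path-sum definition of $p(t)$ single-valued; the remaining steps are routine continuity and compactness arguments.
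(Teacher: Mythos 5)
Your proposal is correct and follows essentially the same route as the paper's proof: fix a base vertex, define each deformed vertex by summing deformed edge vectors along a path, invoke genericity for path-independence, and use finiteness plus an openness/compactness argument to keep the intersection axioms valid for small $t$. Your additions (the one-dimensional relation space explaining why genericity is exactly the needed hypothesis, and the check that $P_t$ is again generic) are sound refinements of the same argument rather than a different approach.
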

\begin{proof}
We begin by defining the vertex set of $P_t$ in $\R^3$.
Let $p$ be a fixed vertex of $P$. We define the corresponding vertex of $P_t$  by letting $p_t=p$. Now let $q$ be an arbitrary vertex of $P$, and $e_1,\ldots e_n$ a path of edges from $p$ to $q$ in $P$. As $P$ is based on $\Sigma$, each edge $e_k$ corresponds to a star vector $\sigma_k=\pm v_k$, where $v_k\in \Sigma$ and the sign depends on the orientation of the edge. Then   $q=p+\sum \sigma_k$. We define $q_t = p_t +\sum \sigma_k(t)$, where $\sigma_k(t)=\pm v_k(t)$,  $v_k(t)\in \Sigma_t$ and the  signs are chosen consistently. This definition is independent of the chosen edge path  from 
$p$ to $q$, as we assumed that $P$ is generic. 

The new vertices depend continuously on $t$. By definition of a polyhedral complex, a facet of $P$ is the convex hull of some of its vertices. We use the same vertex sets to define the facets of $P_t$. This will be possible for small $t$ as the intersection condition for the polytopes is an open condition. Here we use the finiteness assumption: It is conceivable that further and further away polytopes need smaller and smaller open neighborhoods in order to remain disjoint.
\end{proof}

The second problem about the long-term existence of deformations is more subtle, as it can easily happen that vertices or edges from different  polytopes become incident during the deformation (see Remark \ref{rem:embed}). There is a special situation that applies to most of our examples where this can be avoided. This occurs when the 2-complex in question is in fact part of the 2-skeleton of an infinite 3-complex without boundary:

\begin{lemma}\label{lem:embed}
Suppose that for $t\in[0,1)$, $P_t$ is a continuous family of finite polyhedral 3-complexes satisfying the following conditions:
\begin{itemize}
\item Each $P_t$ is a subcomplex of a polyhedral 3-complex without boundary.
\item These 3-complexes without boundary are generically based on a  family $\Sigma_t$ of stars.
\item The stars are continuous even for $t\in[0,1]$.
\end{itemize}
Then $P_1$, defined as above, is a polyhedral 3-complex based on $\Sigma_1$.
\end{lemma}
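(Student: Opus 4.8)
The plan is to first extend the vertex construction of the preceding lemma across the closed interval, and then to promote its short-time existence into a global statement by a continuity/degree argument that exploits the without-boundary hypothesis. First I would define the vertices of $P_1$ exactly as before, using the star $\Sigma_1$: fix a base vertex $p$ with $p_1=p$, and for any other vertex $q$ choose an edge path and set $q_1 = p_1 + \sum \sigma_k(1)$ with $\sigma_k(1) = \pm v_k(1)\in\Sigma_1$. Since each $P_t$ is generic this is independent of the path, and since the stars are continuous on all of $[0,1]$, the positions $q_t$ depend continuously on $t$ up to and including $t=1$, so that $q_1=\lim_{t\to1} q_t$. The combinatorial data (which vertices bound which facets and cells) is the same for every $t$; what must be checked is that the geometric realization at $t=1$ still satisfies the axioms of a polyhedral complex, the only nontrivial one being that two cells meet in a common facet or not at all.

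The key observation is that because $\Sigma_1$ is again a star, no three of the vectors $v_i(1)$ are linearly dependent, so every parallelepiped $R_{ijk}$ retains positive volume: no cell of the ambient 3-complex degenerates at the endpoint. I would make this quantitative by working with the ambient complexes without boundary $Q_t\supset P_t$ and the piecewise-affine realization $f_t\colon |\mathcal{Q}|\to\R^3$ of the fixed abstract complex $\mathcal{Q}$. At $t=0$ the map $f_0$ is an orientation-preserving homeomorphism onto $\R^3$, reflecting that $Q_0$ is an embedded space-filling tiling; in particular $|\mathcal{Q}|\cong\R^3$, an identification that is fixed for all $t$. Assigning to each 3-cell the sign of the determinant of the affine map $f_t$ on it, non-degeneracy on all of $[0,1]$ forces these signs to stay constant, so $f_t$ is orientation preserving on every cell throughout.

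The main step, and the place I expect the real difficulty, is upgrading non-degeneracy into global embeddedness, that is, ruling out precisely the incidences of vertices and edges from different cells described just before the lemma. For this I would run a degree argument: at a regular value the local degree of $f_t$ counts the covering cells with their signs, hence (all signs being $+1$) counts them with multiplicity, and it equals the global mapping degree, a proper-homotopy invariant that therefore stays equal to its value $1$ at $t=0$. Degree $1$ together with positivity of all cell orientations means every regular value is covered exactly once, so $f_t$ is injective on cell interiors and $Q_t$ remains an embedded tiling. Here the without-boundary hypothesis is essential: it forces the cells to close up around every edge and vertex (dihedral angles summing to $2\pi$, solid angles to $4\pi$), turning "local embedding" into a discrete invariant that cannot jump while cells stay non-degenerate, a conclusion that genuinely fails for complexes with boundary, where a cell can fold over freely.

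Restricting the embedded tiling $Q_1$ to the finite subcomplex $P_1$ then yields the intersection axiom, while the incidence and connectivity axioms are inherited from the fixed combinatorics, so $P_1$ is a polyhedral 3-complex based on $\Sigma_1$. The one technical point to handle with care is properness of $f_t$ for the infinite complex $\mathcal{Q}$; since $P_1$ is finite and the complexes are locally finite, it suffices to run the degree count inside a single large ball containing a neighborhood of $|P_1|$ for $t$ near $1$, where only finitely many cells are relevant, so that no cell from outside this region can reach $P_1$ without first violating non-degeneracy or the degree-one count established above.
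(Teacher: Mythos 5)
Your approach is genuinely different from the paper's: the paper gives a short local convergence argument (near any point of $P_1$ the complex is a union of non-degenerate parallelepipeds that pairwise meet in facets or are disjoint, and this configuration persists in the limit because the parallelepipeds built from $\Sigma_1$ are still non-degenerate), whereas you globalize the problem via degree theory on the piecewise-affine realization of the ambient complex. Both arguments rest on the same two pillars --- $\Sigma_1$ is still a star, so no cell degenerates, and the without-boundary hypothesis makes the ambient complex space-filling --- and your observation that degree $1$ together with positively oriented cells yields injectivity on cell interiors is correct in principle, and would deliver the covering property simultaneously, something the paper leaves implicit.

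However, there is a genuine gap at exactly the point you flag as "the one technical point." The mapping degree is a proper-homotopy invariant, and properness of $f_1$ is equivalent to local finiteness of the limit configuration, which is not automatic: the ambient complex is infinite, the displacement of a vertex at combinatorial distance $n$ from the base point is bounded only by $n\max_i|v_i(t)-v_i(1)|$, so convergence of vertices is not uniform, and at $t=1$ the possible vertex positions $p+\sum_i n_i v_i(1)$ (integer combinations of four vectors in $\R^3$) can a priori be dense. Nothing you have established prevents infinitely many far-away cells from accumulating on a neighborhood of $|P_1|$ in the limit, and your proposed localization is circular: a distant cell drifting toward $P_1$ as $t\to 1$ violates neither non-degeneracy nor any degree count, since for $t<1$ the complex is embedded by hypothesis (so no contradiction can arise before the limit), while the degree-one count at $t=1$ is available only after properness is known. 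The repair is elementary: pairwise disjointness of cell interiors survives the limit (if two interiors overlapped at $t=1$, locally uniform convergence of the two non-degenerate affine maps on those cells would force an overlap already for $t$ close to $1$), and since every cell at $t=1$ is a translate of one of four fixed parallelepipeds with volume bounded below and diameter bounded above, only finitely many cells can meet any given ball; this gives local finiteness, hence properness. Note, though, that this pairwise limit argument is essentially the paper's entire proof, so once you supply it the degree machinery becomes optional rather than load-bearing.
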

\begin{proof}
Note that polyhedral 3-complexes based on stars have parallelepipeds as their 3-polytopes. These are non-degenerate, as we have assumed that no three star vectors are coplanar. This holds for the 3-polytopes of the limit star $\Sigma_1$ as well. In order to see that $P_1$ is a polyhedral 3-complex, we have to show that the intersection of any two polytopes is a facet. Near any point $p$ of $P_1$, the polyhedral 3-complexes $P_t$ are the union of non-degenerate parallelepipeds.
These either intersect in facets or are disjoint. The same must hold in for the  limit parallelepipeds, as these are non-degenerate. Hence 
 the polytopes of  $P_1$ are either disjoint or share a facet as well.
\end{proof}

We now come to the construction of bifoldable polyhedral complexes.
Our method will be based on the 1-parameter family of stars $\Sigma= \Sigma(r_1,r_2,r_3,r_4,\alpha,\beta)$ given by the vectors

\begin{align*} 
   v_1 = {}&  r_1 (\cos(\alpha), 0, \sin(\alpha))\\
   v_2 = {}&  r_2 (-\cos(\alpha), 0, \sin(\alpha))\\
   v_3 = {}&  r_3 (0,\cos(\beta), - \sin(\beta))\\
   v_4 = {}&  r_4 (0,-\cos(\beta),  -\sin(\beta)) \ .
\end{align*}
Here the  $r_i$ are arbitrary but fixed positive real numbers, and $\alpha, \beta\in(0,\pi/2)$ are angle parameters that will vary dependent on a single parameter. See Figure \ref{fig:star}.
The tetrahedral star can be recovered (up to similarity) by letting $r_i=1$ and $\alpha=\beta=\arccos(\sqrt{2/3})$.

Usually, we  expect  a polyhedral complex that is based on any given star to be rigid. Our key observation is that if we disallow the usage of two of the six facets, the polyhedral complex  becomes foldable.

More precisely we have:
 
\begin{theorem} 
\label{thm:key}
A polyhedral complex $P$ based on $\Sigma$ is biplanar. If it doesn't have any facets of type  $\Pi_{12}$ and $\Pi_{34}$, it can be collapsed into any of the two vertical coordinate planes.
\end{theorem}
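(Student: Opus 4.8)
The biplanarity is immediate from the definition of $\Sigma$. By construction $v_1$ and $v_2$ have vanishing $y$-coordinate and so lie in the vertical coordinate plane $\{y=0\}$, while $v_3$ and $v_4$ have vanishing $x$-coordinate and lie in $\{x=0\}$. Since every edge of a complex based on $\Sigma$ is a translate of some $\pm v_i$, every edge is parallel to one of the two vertical coordinate planes, which is what biplanarity asserts here. The plan for the collapsing statement is to realize the required folding not by moving the combinatorial complex directly, but by deforming the generating star $\Sigma$ inside its own parameter family and then transporting this to $P$ via the short-time existence lemma.

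The heart of the argument, and the step I would carry out first, is a computation of the pairwise inner products. For the four \emph{admissible} index pairs $(i,j)\in\{(1,3),(1,4),(2,3),(2,4)\}$ one finds $v_i\cdot v_j=-\,r_ir_j\,\sin\alpha\sin\beta$, whereas for the two excluded pairs one gets $v_1\cdot v_2=-r_1r_2\cos2\alpha$ and $v_3\cdot v_4=-r_3r_4\cos2\beta$. Since the side lengths $|v_i|=r_i$ are fixed, each admissible facet $\Pi_{ij}$ is a parallelogram with prescribed sides and with interior angle whose cosine is the single quantity $-\sin\alpha\sin\beta$; hence its congruence class is controlled by $\sin\alpha\sin\beta$ alone. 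I expect this simultaneous-rigidity observation to be the main obstacle conceptually: a priori the four admissible facets could impose four incompatible constraints, and the point is that they all collapse to the one relation $\sin\alpha\sin\beta=\text{const}$. The same computation simultaneously explains the hypothesis, since $\Pi_{12}$ and $\Pi_{34}$ have angles governed by $\cos2\alpha$ and $\cos2\beta$ \emph{separately}, which cannot stay fixed along a path driving $\alpha$ or $\beta$ to $\pi/2$.

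With the crux in hand, I would define the folding. Letting $c=\sin\alpha\sin\beta$ at the initial configuration (so $c\in(0,1)$), I deform $\Sigma$ along the locus $\sin\alpha(t)\sin\beta(t)=c$, with $\alpha$ increasing monotonically from $\arcsin c$ to $\pi/2$ and $\beta=\arcsin(c/\sin\alpha)$ decreasing from $\pi/2$ to $\arcsin c$. By the inner-product computation all four admissible facet types stay congruent to themselves throughout, so this is a folding in the required sense. To promote this deformation of the star to an actual family $P_t$ of complexes I would invoke the short-time existence lemma, using that $P$ is generic, and, where long-time embeddedness is at issue, Lemma \ref{lem:embed}.

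Finally I would verify the two collapses by a limit computation. At the endpoint $\alpha=\pi/2$ every star vector has vanishing $x$-coordinate: $v_1$ and $v_2$ degenerate to $r_1(0,0,1)$ and $r_2(0,0,1)$, while $v_3,v_4$ never had an $x$-component. Writing an arbitrary vertex along an edge path as $p+\sum_k\sigma_k(t)$ then shows that every vertex's $x$-coordinate tends to that of the fixed base vertex, so the limit complex lies in a plane parallel to $\{x=0\}$, i.e. it collapses into the $yz$-plane. By the symmetric degeneration at $\beta=\pi/2$, where all $y$-coordinates become constant, the complex collapses into the $xz$-plane. Once the common cosine $-\sin\alpha\sin\beta$ is recognized, these endpoint limits are routine and complete the argument.
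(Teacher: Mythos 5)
Your proposal is correct and follows essentially the same route as the paper's proof: the same inner-product computation showing that each admissible facet's congruence class is governed solely by $\sin\alpha\sin\beta$, the same folding along the locus $\sin\alpha\sin\beta=\text{const}$ via $\beta=\arcsin(\lambda/\sin\alpha)$, and the same endpoint analysis at $\alpha=\pi/2$ and $\beta=\pi/2$ yielding the two collapses. The differences are only presentational --- you additionally write out the forbidden products $v_1\cdot v_2=-r_1r_2\cos 2\alpha$ and $v_3\cdot v_4=-r_3r_4\cos 2\beta$ and explicitly invoke genericity and the existence/embeddedness lemmas, steps the paper's proof leaves implicit.
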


\begin{proof}
That $P$ is biplanar is trivial, as the star vectors lie in either of the two vertical coordinate planes. As $P$ is based on $\Sigma$, every vertex of it is an integral linear combination of star vectors. We will  deform $P$ by changing the angle parameters $\alpha$ and $\beta$, but keeping both the $r_i$ and the combinatorial information of $P$. We need to verify that under this deformation the facets of $P$ remain congruent.

A facet parallel to the plane $v_i\wedge v_j$ of the deformed complex will be congruent to the original facet if and only if the dot products $v_i\cdot v_i$, $v_j\cdot v_j$ and $v_i\cdot v_j$ remains unchanged. The first two dot products remain unchanged as we do not change the length of the star vectors during the deformation. As  facets in the planes $v_1\wedge v_2$ and $v_3\wedge v_4$ are forbidden, the third dot product will be equal to $-r_i r_j \sin(\alpha)\sin(\beta)$.
Thus in order to have a folding of  $P$, we  need to deform $\Sigma$ so that $\sin(\alpha)\sin(\beta)$ remains constant. As $\alpha, \beta\in(0,\pi/2)$ we have $\lambda=\sin(\alpha)\sin(\beta)\in(0,1)$. This shows that we can fold $P$ by letting 
\[
\beta = \arcsin(\lambda/\sin(\alpha))
\]
for $\alpha \in (\arcsin(\lambda),\pi/2)$. Note that when $\alpha=\pi/2$, $P$ has collapsed into the plane $x=0$. When $\alpha=\arcsin(\lambda)$ we have $\beta=\pi/2$, and $P$ has collapsed into the plane $y=0$.
\end{proof}

\begin{definition}
We call the facets $\Pi_{12}$ and $\Pi_{34}$ {\em forbidden facets} and the facets $\Pi_{13}$, $\Pi_{14}$, $\Pi_{23}$ and $\Pi_{24}$ {\em admissible facets}. 
\end{definition}

\begin{definition}
We call a generic polyhedral complex based on $\Sigma$ that does not contain any forbidden facets a $\Sigma$-complex.
\end{definition}

We note that the four admissible facets of $\Sigma$ will be parallelograms with acute angle $\gamma$ given by
\[
\cos\gamma = \sin\alpha\sin\beta \ .
\]
In case that all star vectors have the same length, i.e. that $r_1=r_2=r_3=r_3$, these parallelograms will therefore be congruent rhombi.
Observe that necessarily $0<\gamma<\pi/2$, so that squares can never be facets, but all other rhombi can occur.

As explained above, it is very easy to construct  polyhedral complexes based on $\Sigma$, and by removing the forbidden facets we obtain 
a large number of bifoldable $\Sigma$-complexes. In the following sections, we will focus on polyhedral examples.

\begin{definition}
A polyhedral 2-complex is called a {\em polyhedron} if  each edge belongs to at most two facets and each vertex is a manifold point, i.e. the intersection of the polyhedron with a small ball centered at that vertex is homeomorphic to a disk. A $\Sigma$-polyhedron is a $\Sigma$-complex that is also a polyhedron. 
\end{definition}

\begin{remark}
Note that we made the assumption that the two planes in which the star vectors lie are orthogonal. This is crucial: If they make another angle, Theorem \ref{thm:key} fails, because the congruence condition for the admissible facets then forces $\alpha$ and $\beta$ to be constant.
 \end{remark}
 
 \begin{remark}\label{rem:embed}
It is possible for $\Sigma$-polyhedra to self-intersect during a deformation, see Figure \ref{fig:Intersect}. The only sufficient condition we know that prevents this from happening is given by Lemma \ref{lem:embed}. 
\begin{figure}[h]
      \centering
      \subcaptionbox{Before \label{fig:Intersect1}}
        {\includegraphics[width=0.45\textwidth]{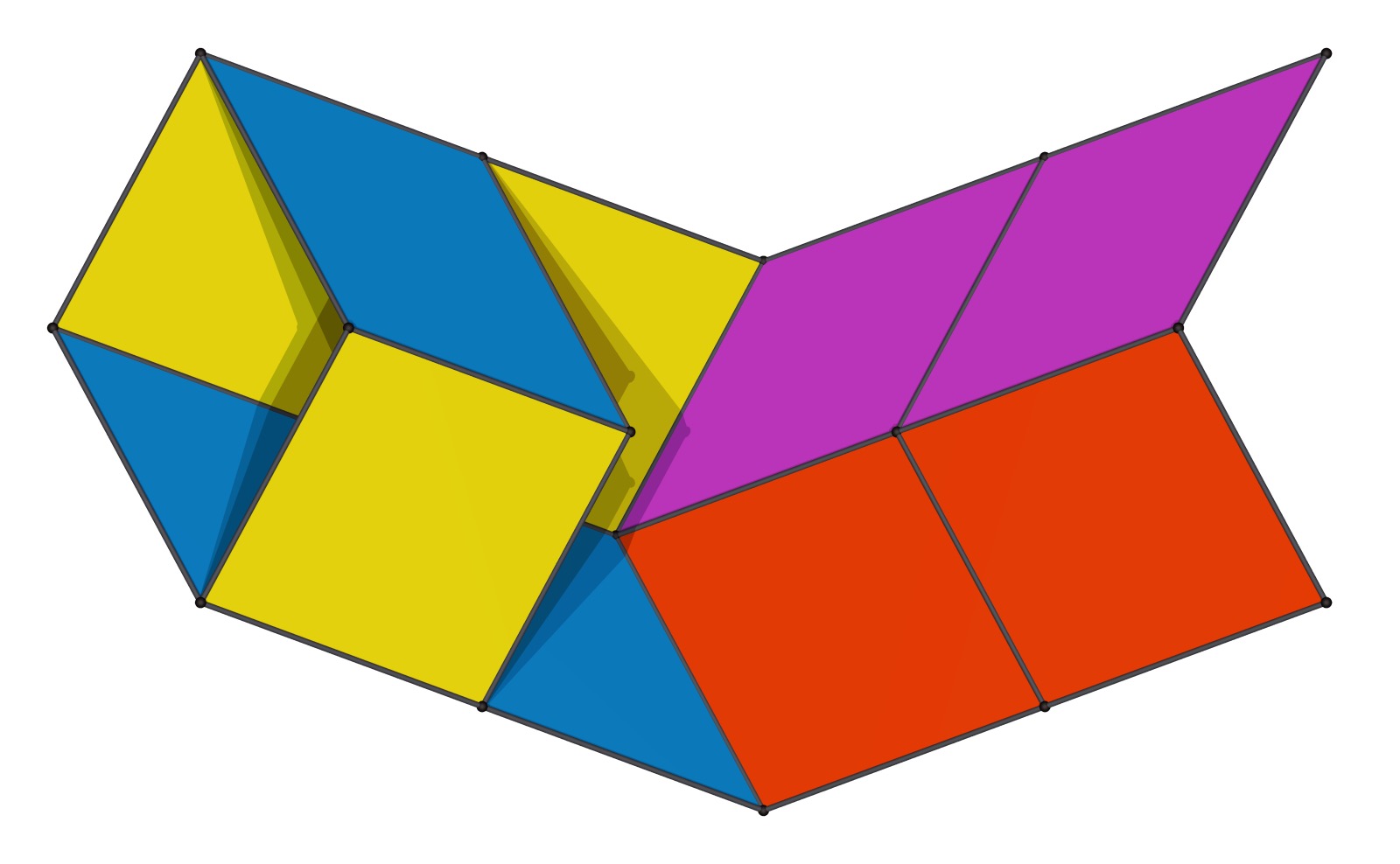}}
      \subcaptionbox{After \label{fig:Intersect2}}
        {\includegraphics[width=0.45\textwidth]{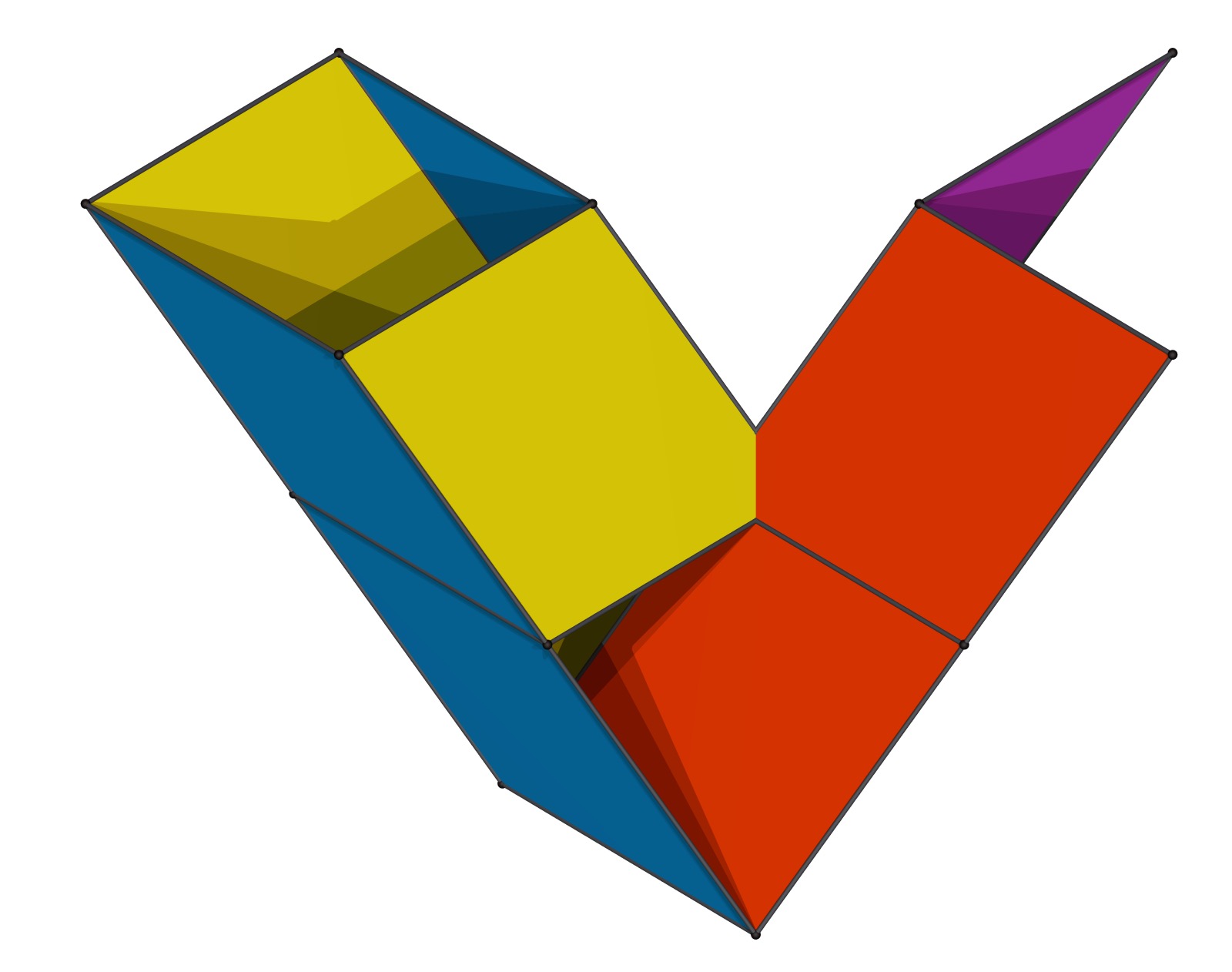}}
      \caption{Intersection during folding}
      \label{fig:Intersect}
    \end{figure}
\end{remark}
 
We conclude this section by showing:

\begin{theorem} \label{thm:finite}
There is no finite $\Sigma$-polyhedron without boundary.
\end{theorem}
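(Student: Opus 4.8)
The plan is to argue by contradiction: assume $P$ is a finite $\Sigma$-polyhedron without boundary. First I would record what the hypotheses buy us. As a finite polyhedral $2$-complex that is a manifold with no boundary, $P$ is a nonempty compact surface; since it sits in $\R^3$ as an embedded polyhedron, it is moreover orientable and, by Jordan--Brouwer separation, bounds a compact solid region $\Omega$ with $\partial\Omega=P$. I would keep this solid $\Omega$ in reserve for the geometric part of the argument, since, as explained below, the combinatorics alone cannot finish the proof.

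The first genuine step is a combinatorial Euler-characteristic bound. Every facet of a $\Sigma$-complex is a parallelogram, so counting edge--facet incidences gives $2E=4F$, i.e. $E=2F$. Call an edge an \emph{$A$-edge} if its direction is $\pm v_1$ or $\pm v_2$ and a \emph{$B$-edge} if it is $\pm v_3$ or $\pm v_4$. At any corner, each admissible facet $\Pi_{ij}$ (with $i\in\{1,2\}$, $j\in\{3,4\}$) contributes exactly one $A$-edge and one $B$-edge; hence around every vertex the cyclically ordered edges must alternate between $A$-type and $B$-type. Consequently every vertex has even degree, and since a manifold vertex of a polyhedron has degree at least $3$, in fact at least $4$. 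Then $2E=\sum_v\deg(v)\ge 4V$ gives $V\le F$, whence $\chi(P)=V-E+F=V-F\le 0$. In particular $P$ is not a sphere.

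I would stress that this bound is all one can extract combinatorially: quotients of the doubly periodic examples constructed later are finite closed surfaces abstractly assembled from admissible facets with alternating vertices, so finite closed $\Sigma$-\emph{complexes} do exist once one leaves $\R^3$ (they live in flat $3$-tori). The theorem must therefore use the embedding in $\R^3$ essentially, and this is where I would bring in the solid $\Omega$ together with the special feature of the star: no admissible facet is parallel to a coordinate plane, because the normal $v_i\times v_j$ of $\Pi_{ij}$ has all three coordinates nonzero for $\alpha,\beta\in(0,\pi/2)$. In particular the coordinate $x$ is nonconstant on every facet, while it is constant along every $B$-edge (since $v_3,v_4$ lie in $x=0$). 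I would exploit this by examining the extreme slice $P\cap\{x=x_{\min}\}$: it contains no two-dimensional piece; any vertex realizing $x_{\min}$ has its two $A$-edges forced to point strictly into $x>x_{\min}$, hence has even degree $4$ with exactly two incident $B$-edges, which again lie at level $x_{\min}$. Thus the leftmost silhouette is a disjoint union of simple closed $B$-polygons lying in the supporting plane $x=x_{\min}$, along which $P$ is tangent to that plane from the side of $\Omega$; the same holds at $x_{\max}$, and symmetrically the extreme slices in $y$ are closed $A$-polygons.

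The main obstacle is the final step: converting this rigid silhouette picture into a contradiction with $\chi\le 0$, i.e. excluding embedded surfaces of positive genus. Here I expect the argument to require the full classification of the fourteen admissible vertex types of Section \ref{sec:vertices}, used to control how the faces meeting a silhouette cycle attach to the solid $\Omega$, together with the orthogonality of the two coordinate planes that Theorem \ref{thm:key} singles out as the defining feature of the construction. The delicate point is that the local configurations at peaks, valleys and saddles are individually compatible with a closed surface --- peaks and valleys even carry positive curvature $2\pi-4\gamma$ --- so neither a Morse-theoretic count for a linear height function nor the mapping degree of the orthogonal projection to a coordinate plane yields a contradiction on its own; both are satisfied by the non-embeddable toroidal quotients, and fold edges preclude the projection from being a local homeomorphism. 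The contradiction must instead come from globally tracking how $\Omega$ is forced to lie relative to the admissible faces as one sweeps between the extreme slices, and showing that closing the surface up in $\R^3$ without self-intersection is impossible. I would regard making this sweeping argument precise as the crux of the proof.
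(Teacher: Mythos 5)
Your proposal does not prove the theorem. Everything up to your last paragraph is correct and even carefully argued: the count $2E=4F$, the alternation of $\{v_1,v_2\}$-edges and $\{v_3,v_4\}$-edges around a vertex forcing even valency at least $4$, hence $\chi(P)\le 0$; and the description of the slice $x=x_{\min}$ as a disjoint union of closed $B$-polygons whose vertices have valency exactly $4$ with edge directions $v_1$ and $-v_2$ pointing up. But no contradiction is ever derived from this, and you say so yourself: ``making this sweeping argument precise'' is left as the crux, and that crux is exactly the content of the theorem. Worse, your extremal-vertex picture is genuinely inconclusive on its own: a valency-$4$ vertex at $x=x_{\min}$ with $v_1$ and $-v_2$ going up and two $B$-edges in the slice is just a Miura-type vertex (Figure \ref{fig:Miura}), which is locally perfectly consistent, so no contradiction can be extracted pointwise from the silhouette; your own observation that toroidal quotients satisfy all the combinatorial constraints shows you were aware the argument was not closed.

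The paper finishes with an idea your write-up never reaches, and it is far more elementary than the machinery you anticipate (no Euler characteristic, no genus exclusion, no use of the fourteen vertex types, no global sweep). Rotate the polyhedron so that the $\Pi_{13}$-facets are horizontal and pick a lowest one. Since $\Pi_{12}$ and $\Pi_{34}$ are forbidden, the zone through this facet along $v_1$ contains only $\Pi_{13}$- and $\Pi_{14}$-facets (and the zone along $v_3$ only $\Pi_{13}$- and $\Pi_{23}$-facets), and minimality plus finiteness forces the lowest facets to form a finite horizontal polygon tiled by copies of $\Pi_{13}$, skirted by $\Pi_{14}$- and $\Pi_{23}$-facets that all point upward. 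The punchline is then local: the two up-pointing $\Pi_{14}$'s over a given $\Pi_{13}$ are side facets of the parallelepiped $R_{134}$, the two $\Pi_{23}$'s are side facets of $R_{123}$; these parallelepipeds share their bottom facet and are distinct (equality would force $v_2=\pm v_4$, violating the star condition), so their side facets must cross. Hence at some convex corner of the bottom polygon the adjacent $\Pi_{14}$ and $\Pi_{23}$ intersect beyond their common vertex (Figure \ref{fig:FourWays}), contradicting the intersection condition in the definition of a polyhedral complex. This is exactly the ``embeddedness must be used'' step you predicted, but it is a one-paragraph argument at an extremal \emph{facet} rather than an extremal vertex; that choice is what makes it work, since at a lowest facet all four neighbors are forced upward simultaneously, creating the clash of the two parallelepipeds, whereas your lowest vertices only ever exhibit admissible local configurations.
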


\begin{figure}[H]
   \centering
   \includegraphics[width=3in]{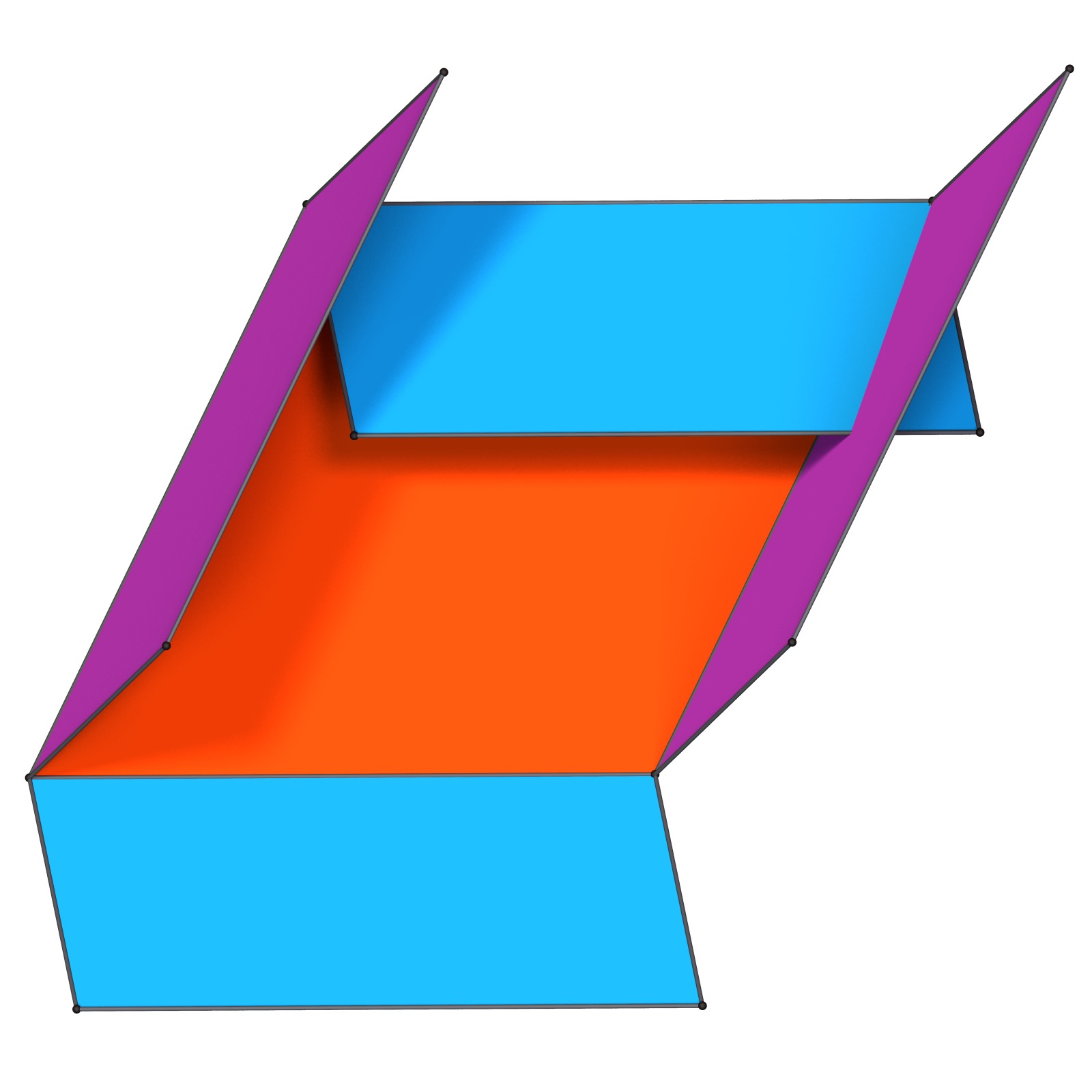}
   \caption{Intersecting side panels}
   \label{fig:FourWays}
\end{figure}

\begin{proof}
We will exploit the fact that $\Sigma$-polyhedra without boundary are (generalized) zonohedra: A {\em zone} is a sequence of facets so that consecutive facets of the sequence share a fixed star vector. Each facet defines two different zones. In general, zones can contain infinitely many facets, but for closed zonohedra without boundary, this number is obviously finite, and the zone is topologically a cylinder. We now assume that we have a finite $\Sigma$-polyhedron, rotate it so that one type of facet is horizontal, and choose  a lowest facet of that type.  We can also assume that the facet in question is of type $\Pi_{13}$.

The zone through this facet with common edge vector $v_1$ consists only of facets of type $\Pi_{13}$ and type $\Pi_{14}$. As there are no facets of type $\Pi_{13}$ below the selected facet, its neighbors are either copies of $\Pi_{13}$, or type $\Pi_{14}$ facets pointing up. A similar statement holds for the other zone with common edge vector $v_3$. Thus near the selected facet, the polyhedron consists of a horizontal finite polygon made up of copies of $\Pi_{13}$ and bounded by facets of type  $\Pi_{14}$ and $\Pi_{23}$ all pointing up.

Now consider a single $\Pi_{13}$  bounded by facets of type  $\Pi_{14}$ and $\Pi_{23}$ all pointing up, as shown in Figure \ref{fig:FourWays} (where ``up'' means ``towards the viewer''). The two $\Pi_{14}$ bound a parallelepiped $R_{134}$ and the two $\Pi_{234}$ another parallelepiped $R_{123}$. The two parallelepipeds share their bottom facets and hence some of their other facets intersect.
They cannot be equal, because this would imply $v_2=v_4$. Thus, for at least for one vertex of $\Pi_{13}$, the two adjancent $\Pi_{14}$ and $\Pi_{23}$ facets intersect. Such a vertex must also occur for the bottom polygon made up of copies of $\Pi_{13}$ and bounded by facets of type  $\Pi_{14}$ and $\Pi_{23}$ all pointing up. Hence the polyhedron has self-intersections, a contradiction.
\end{proof}

\section{Vertex Types}
\label{sec:vertices}

In this section, we list the 14 possible vertex types, i.e. the ways in which the four admissible facets can be grouped around a single vertex in a $\Sigma$-polyhedron.
We have arrived at this list through an exhaustive and systematic enumeration, which we do not reproduce here. 

For each case, we denote by $(a,b)$ the number $a$ of acute angles and the number $o$ of obtuse angles that occur. The sum $a+o$ is the valency of the vertex, and $\kappa = 2\pi -a\gamma -o(\pi-\gamma)$ is the (Gauss) curvature associated to the vertex. The latter quantity is useful to determine the genus $g$ of a closed  polyhedral surface, because the Gauss-Bonnet formula states that
\[
2-2g = \sum_v \kappa_v \ ,
\]
where the sum is taken over all vertices $v$. 

The only possible valencies that can occur are 4, 6, and 8. 
We prove without relying on the enumeration below:

\begin{proposition}
For a $\Sigma$-polyhedron, the valency of a vertex is even.
\end{proposition}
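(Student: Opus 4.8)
The plan is to exploit a two-coloring of the edges at the vertex that is forced by the pairing structure of the admissible facets. First I would record what the manifold condition buys us: since the vertex $p$ is a manifold point, the intersection of the polyhedron with a small sphere centered at $p$ is a circle --- the \emph{link} of $p$. Traversing this link, one alternately crosses a point where an edge of the polyhedron pierces the sphere and an arc lying in a single facet incident to $p$. Consequently the edges and facets incident to $p$ form one cyclic sequence $e_1, F_1, e_2, F_2, \ldots, e_k, F_k$ (indices mod $k$), in which $F_m$ is the facet bounded by the consecutive edges $e_m$ and $e_{m+1}$ at $p$, and the valency of $p$ equals $k$, the number of edges (equivalently facets) in this cycle.

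Next I would introduce the coloring. Assign to each edge at $p$ its \emph{type}, namely the index $i\in\{1,2,3,4\}$ of the star vector $\pm v_i$ along it, and color the edge $A$ if $i\in\{1,2\}$ and $B$ if $i\in\{3,4\}$. The key observation is that every admissible facet $\Pi_{ij}$ has one index in $\{1,2\}$ and one in $\{3,4\}$: by definition the admissible facets are exactly $\Pi_{13}$, $\Pi_{14}$, $\Pi_{23}$, $\Pi_{24}$, while the forbidden $\Pi_{12}$ and $\Pi_{34}$ --- the ones with both indices on the same side --- are excluded. Since the two edges of $F_m$ at $p$ run along the two spanning vectors $v_i$ and $v_j$ of that facet, and they cannot both be of the same type (a facet is nondegenerate), one of $e_m, e_{m+1}$ is colored $A$ and the other $B$.

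Finally I would conclude. The previous step shows that consecutive edges in the cyclic sequence $e_1,\ldots,e_k$ always receive opposite colors, i.e.\ the coloring is a proper $2$-coloring of the cycle on $k$ nodes. Such a proper $2$-coloring exists precisely when this cycle is bipartite, which forces $k$ to be even. Hence the valency $k = a+o$ is even.

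The main point to be careful about is the bookkeeping of the link rather than any deep difficulty: one must check that the link really is a single circle (guaranteed by the manifold condition) and that the alternation is strict even when several edges at $p$ share a type --- for instance when both $+v_1$ and $-v_1$ occur. This is harmless, because no facet can be bounded at $p$ by two edges of the same type (that would make the facet degenerate), so adjacent edges in the link never share a color regardless of how the types repeat, and the bipartite conclusion is insensitive to such repetitions.
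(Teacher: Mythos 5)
Your proof is correct, but it takes a genuinely different route from the paper's. The paper encodes the cyclic sequence of facets around the vertex as ordered index pairs $(ij)$ subject to the adjacency rule that consecutive pairs share an index, then cancels backtracking pairs of the form $(ij)(ji)$ (removing an even number of facets) and argues that what remains is forced to be a periodic repetition of $(13)(32)(24)(41)$, its reversal, or a cyclic permutation, hence of length divisible by four. You instead observe that admissibility is exactly a bipartition statement: every allowed facet $\Pi_{ij}$ has one spanning vector from $\{v_1,v_2\}$ and one from $\{v_3,v_4\}$, so the edge cycle in the link of the vertex (a circle, by the manifold condition) is properly $2$-colored and therefore even. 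Your argument is shorter, avoids the iterated-cancellation bookkeeping and the "successors are uniquely determined" claim, and makes transparent precisely where the exclusion of $\Pi_{12}$ and $\Pi_{34}$ enters; the paper's argument proves strictly more, namely a normal form for the facet cycle that implicitly supports the enumeration of the fourteen vertex types in that section. One small slip on your side: in two places you attribute the color alternation to nondegeneracy of facets, but nondegeneracy only rules out two edges along the \emph{same} $v_i$; a facet spanned by $v_1$ and $v_2$ is perfectly nondegenerate and would break the alternation. What rules it out is admissibility (the forbidden-facet hypothesis), which you do state correctly in your second paragraph, so the proof stands once that parenthetical is corrected.
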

\begin{proof}
The sequence $\Pi_{ij}$ of facets around a vertex gives rise to a sequence of pairs $(ij)$ of indices of edges. The pairs that can occur belong to the set 
\[
\Wcal = \{(13), (31), (14), (41), (23), (32), (24), (42) \} \ .
\]
We distinguish here for once $(ij)$ from $(ji)$ to indicate the order in which the edges occur when following the facets around the vertex. Two pairs can be adjacent in such a sequence if and only if the last index of the first pair equals the first index of the second pair. From a given sequence, we now eliminate pairs of the form $(ij)(ji)$, thereby reducing the number of pairs by an even number. We are then left with a sequence that periodically contains  repetitions of $(13)(32)(24)(41)$, its reversion,  or any of its cyclic permutations, because when  $(ij)(ji)$ pairs are eliminated, any pair determines is successors uniquely. 
\end{proof}

The fact that the valency has to be even follows from the genericity of the stars: The boundary of a vertex figure has as edges the vectors $v_i$, and as the sum has to be zero, each $v_i$ has to occur an even number of times.

We begin with valency 4:

\begin{figure}[H]
\centering
\subcaptionbox{Unfold $(2,2)$, $\kappa=0$  \label{fig:Unfold}}
{\includegraphics[width=.32\linewidth]{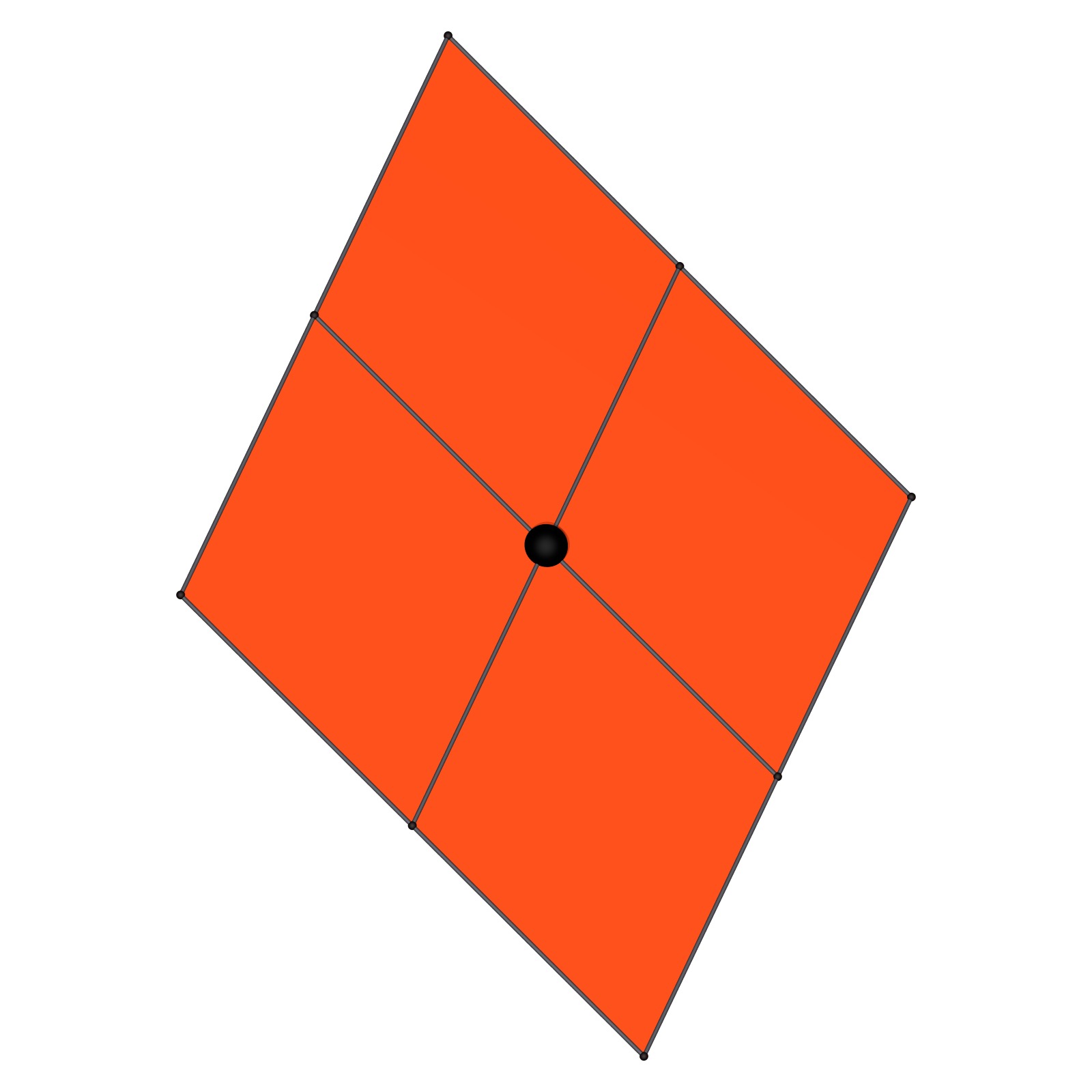}}
\subcaptionbox{Obtuse $(2,2)$, $\kappa=0$\label{fig:FlatObtuse}}
{\includegraphics[width=.32\linewidth]{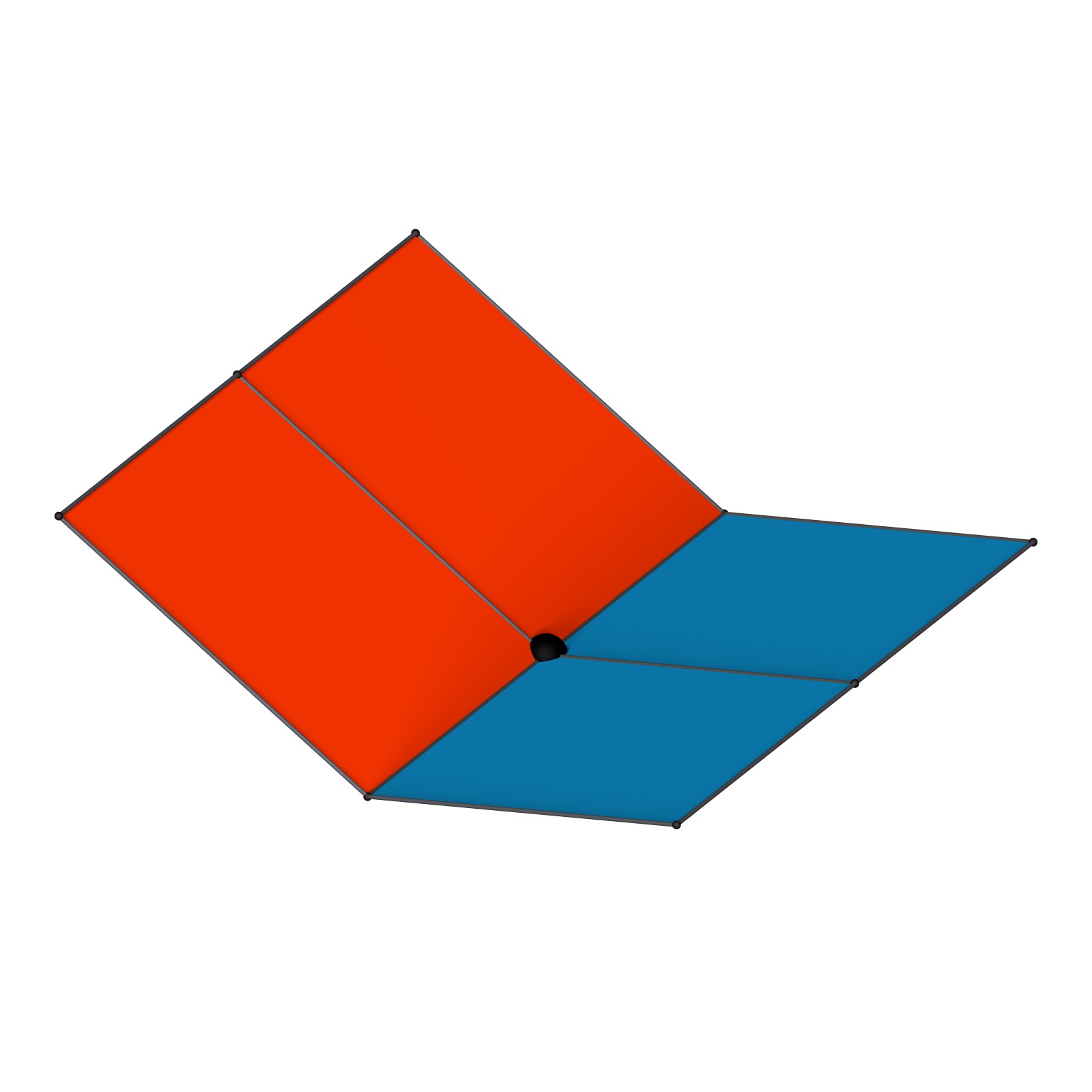}}
\subcaptionbox{Acute $(2,2)$, $\kappa=0$\label{fig:FlatAcute}}
{\includegraphics[width=.32\linewidth]{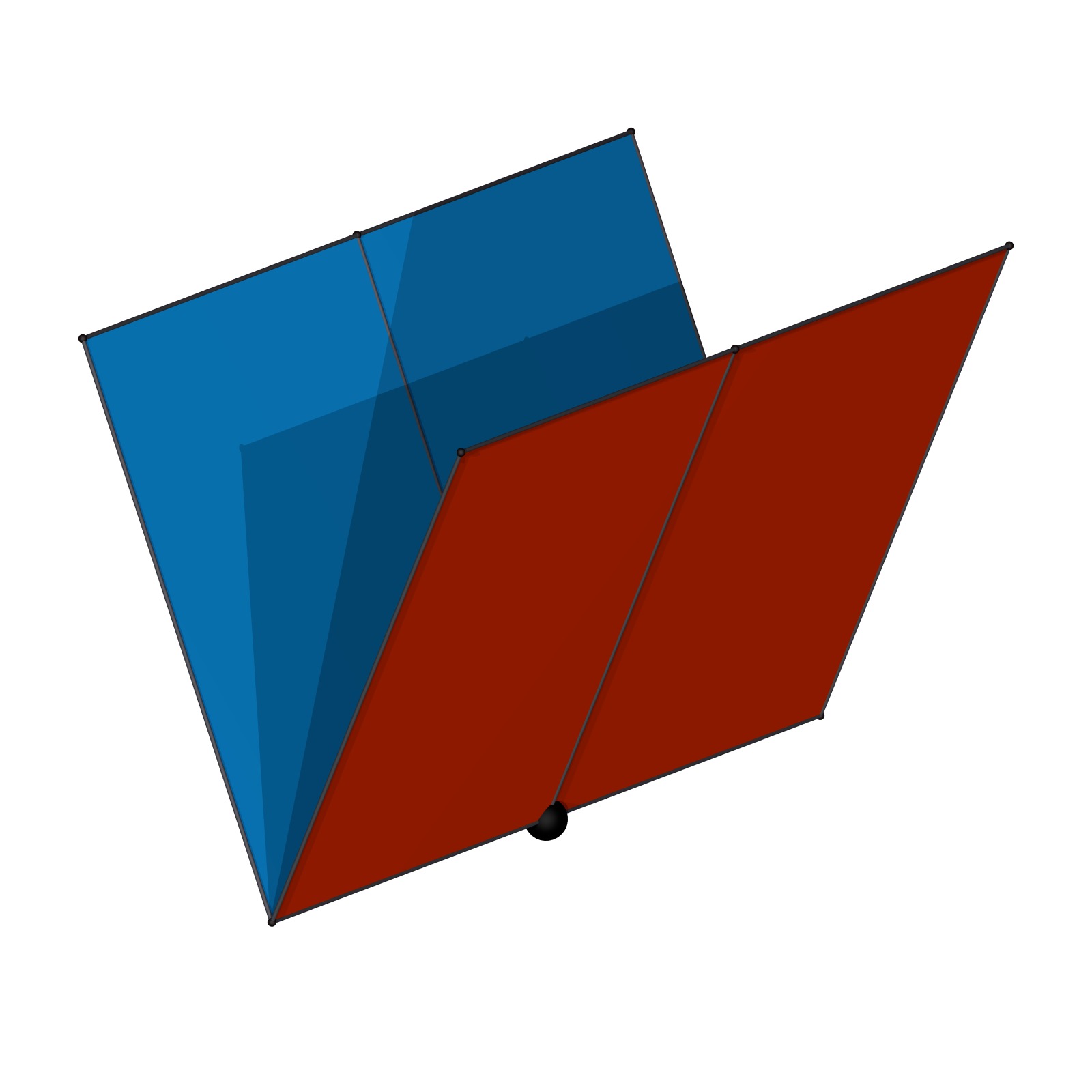}}
\subcaptionbox{Saddle $(0,4)$, $\kappa=4\gamma-2\pi$\label{fig:Saddle}}
{\includegraphics[width=.32\linewidth]{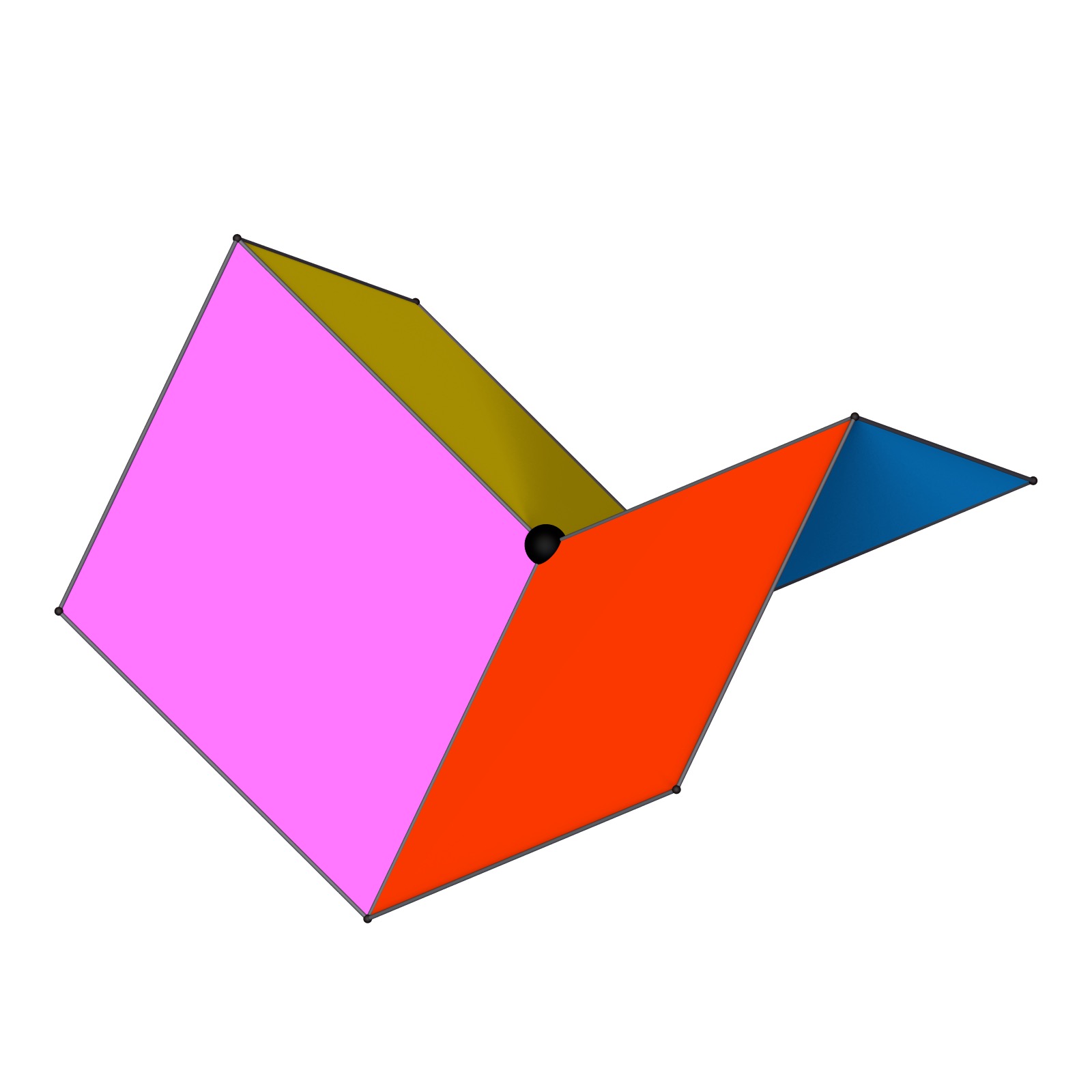}}
\subcaptionbox{Peak $(4,0)$, $\kappa=2\pi-4\gamma$\label{fig:Peak}}
{\includegraphics[width=.32\linewidth]{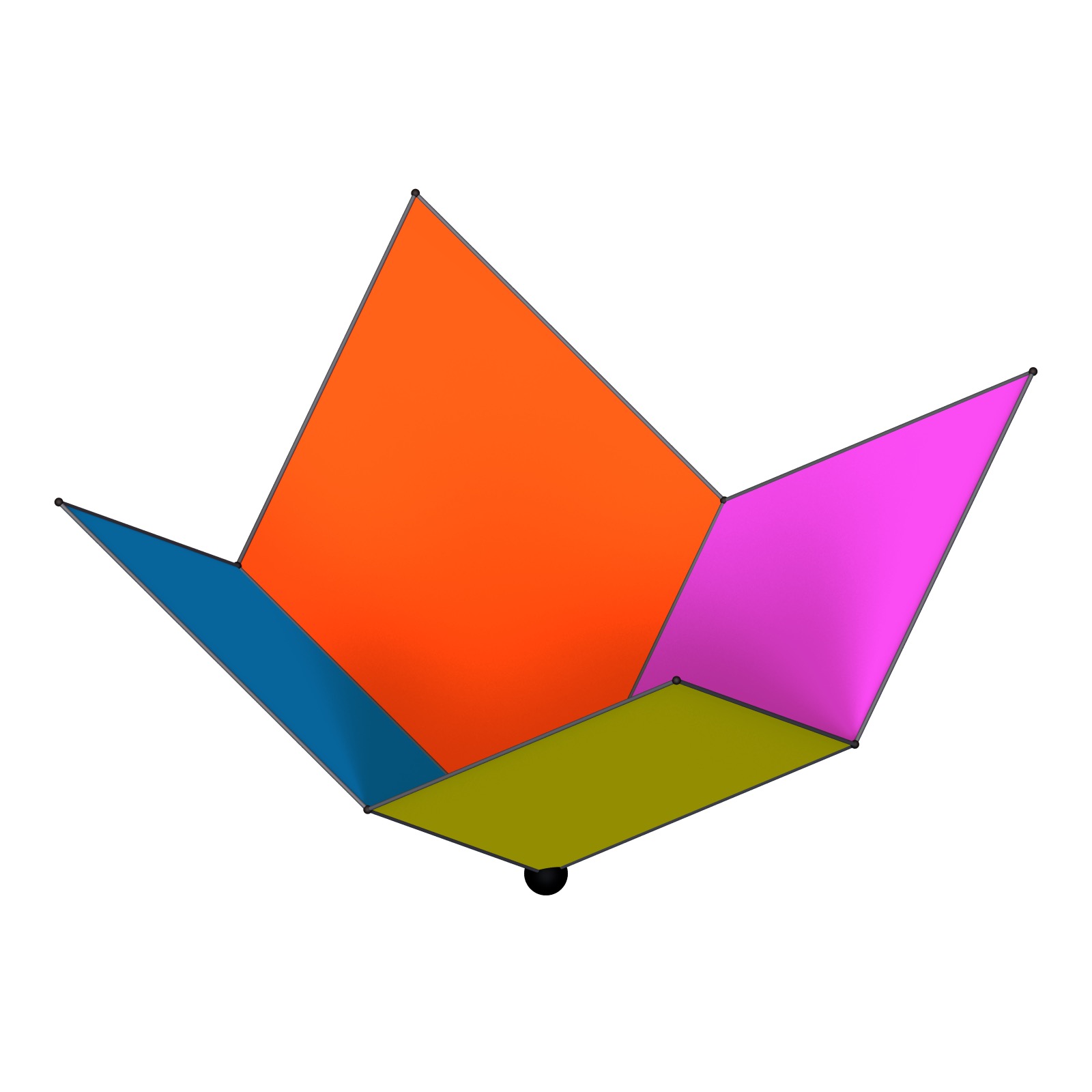}}
\subcaptionbox{Miura $(2,2)$, $\kappa=0$\label{fig:Miura}}
{\includegraphics[width=.32\linewidth]{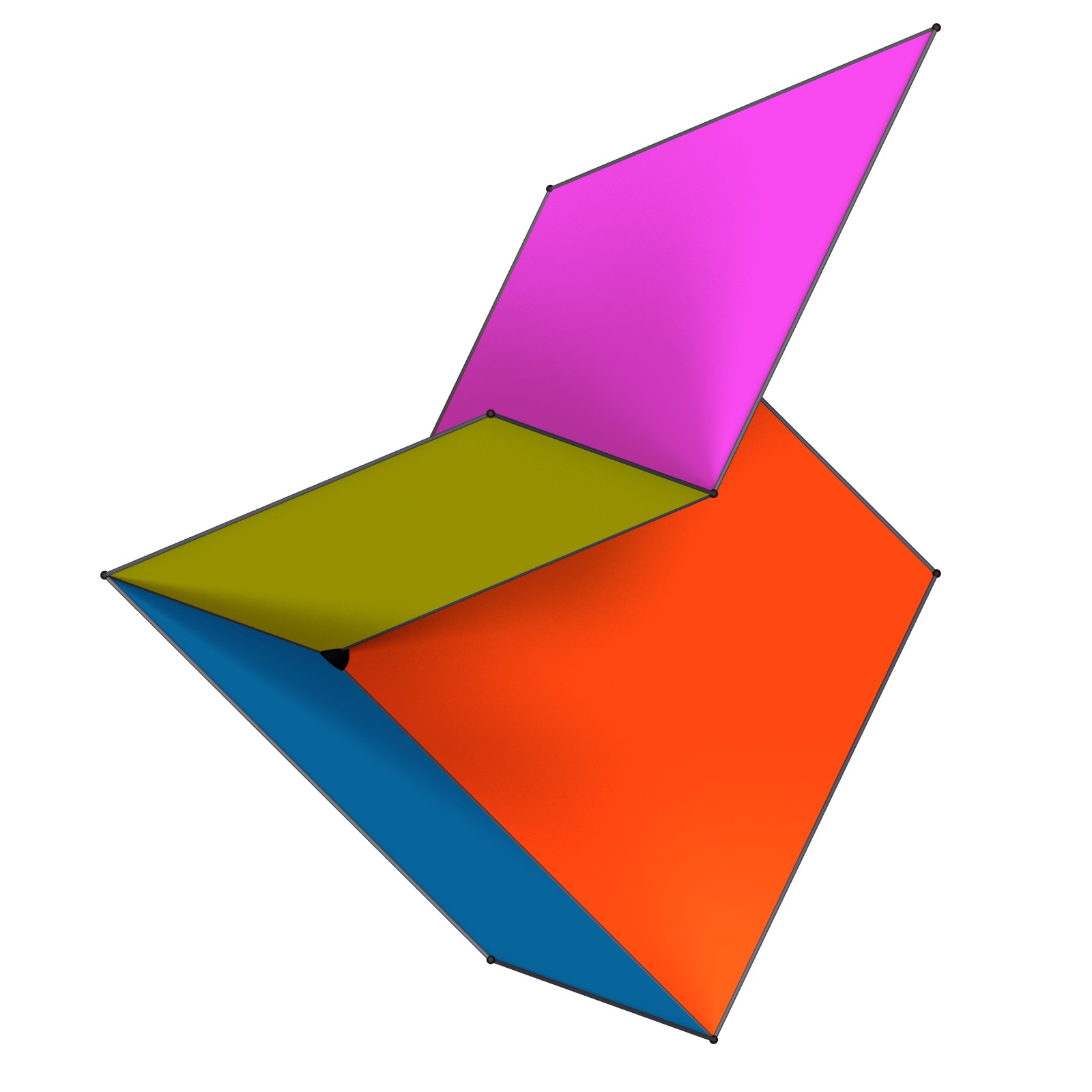}}
\caption{Vertices of valency 4}\label{fig:valency4}
\end{figure}

Next, there are five possible vertex types with valency 6:
    
\begin{figure}[H]
\centering
\subcaptionbox{Crown  $(4,2)$, $\kappa=-2\gamma$  \label{fig:Crown}}
{\includegraphics[width=.32\linewidth]{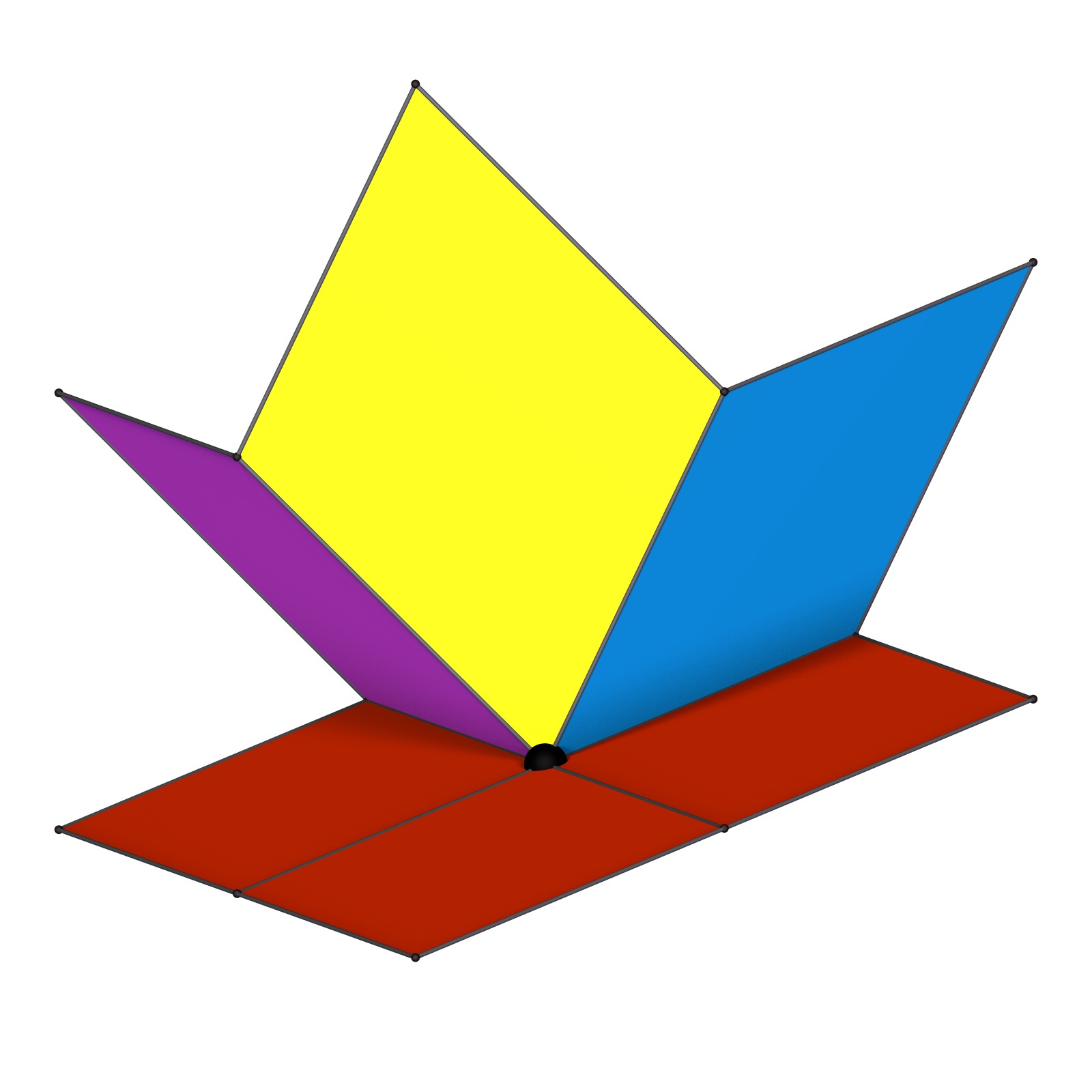}}
\subcaptionbox{Acute X $(4,2)$, $\kappa=-2\gamma$\label{fig:AcuteX}}
{\includegraphics[width=.32\linewidth]{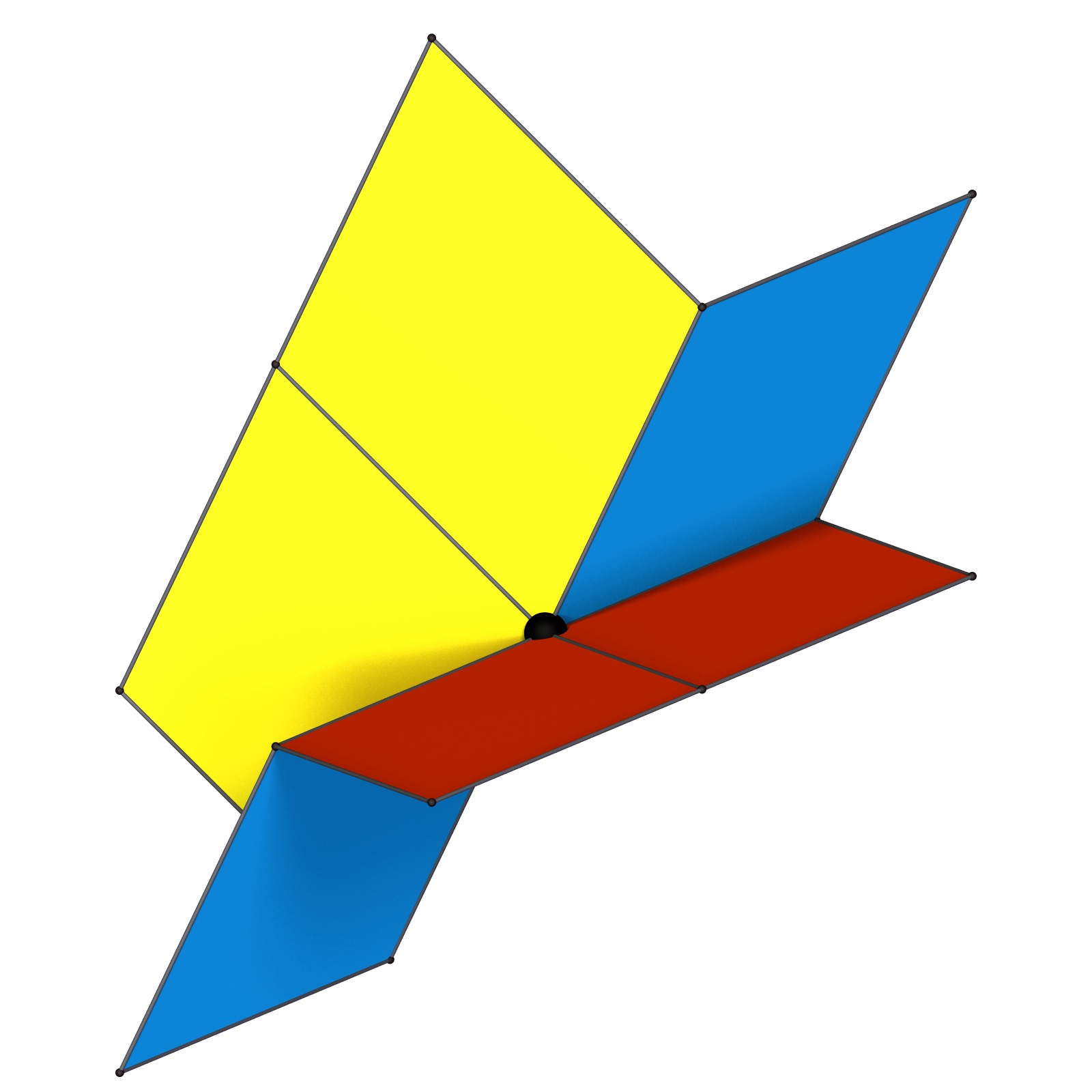}}
\subcaptionbox{Obtuse L $(4,2)$, $\kappa=-2\gamma$\label{fig:ObtuseL}}
{\includegraphics[width=.32\linewidth]{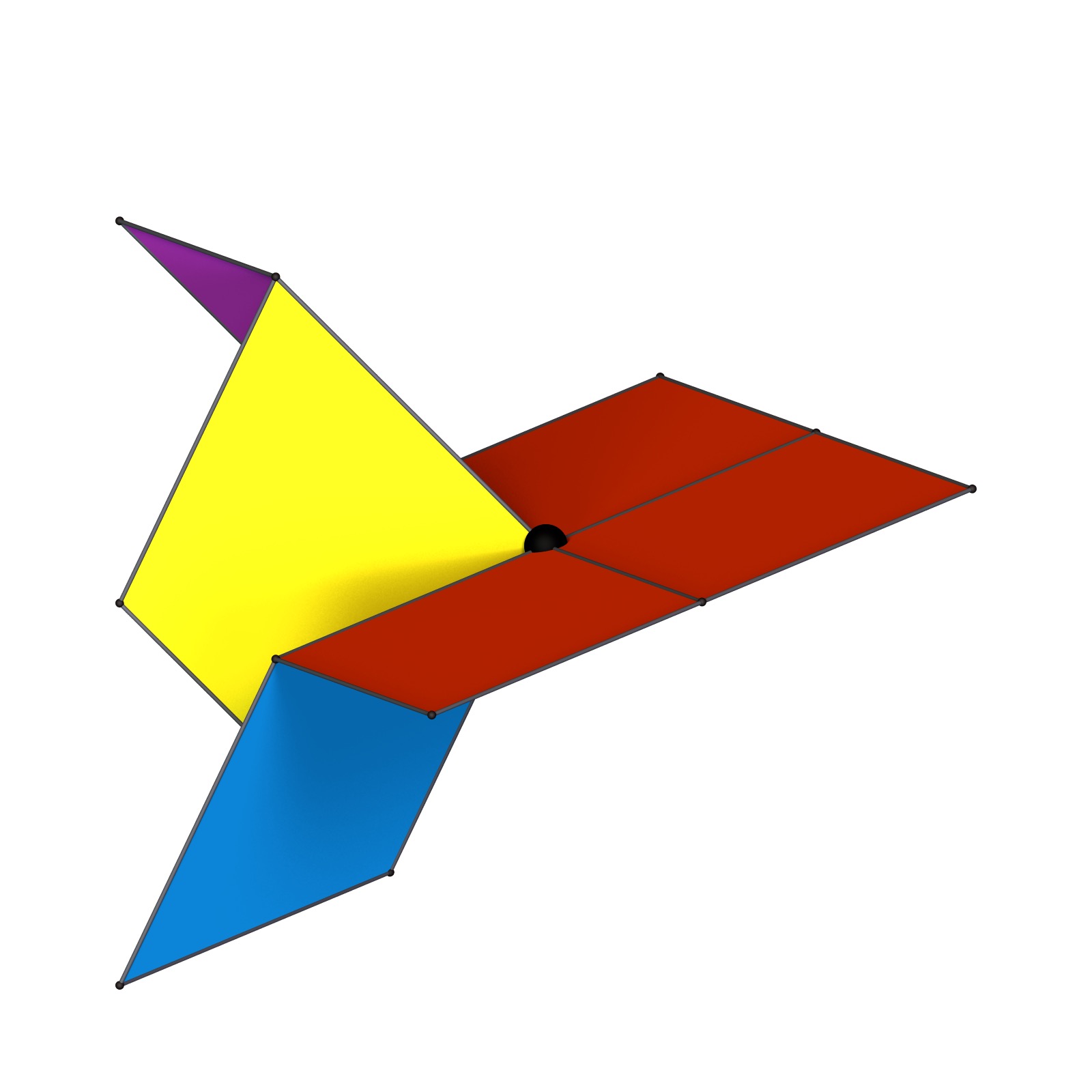}}
\subcaptionbox{Broken Crown $(4,2)$, $\kappa=-2\gamma$\label{fig:BrokenCrown}}
{\includegraphics[width=.32\linewidth]{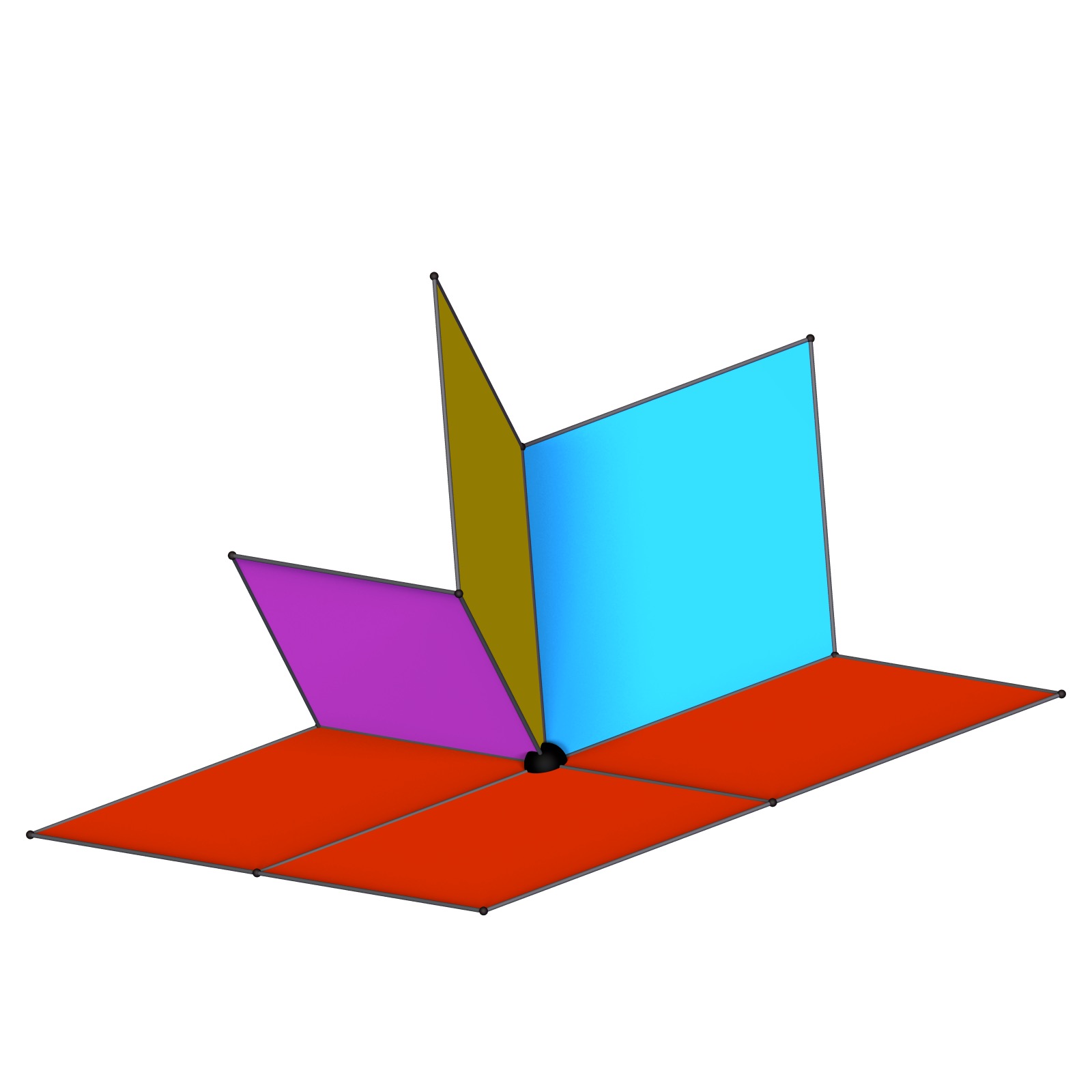}}
\subcaptionbox{Obtuse X $(2,4)$, $\kappa=2\gamma-2\pi$\label{fig:ObtuseX}}
{\includegraphics[width=.32\linewidth]{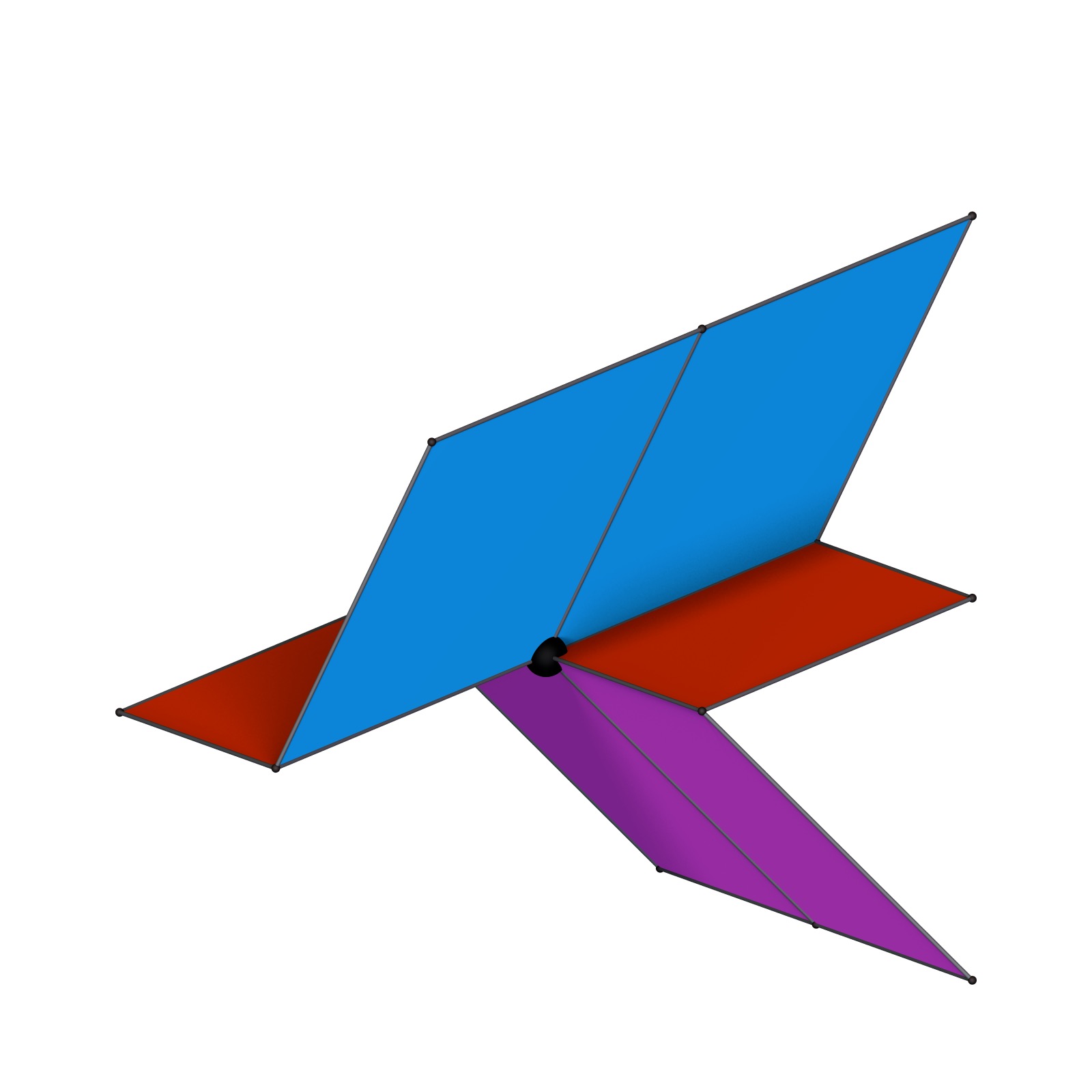}}
\end{figure}

 Finally, there are three possible vertex types with valency 8:
 
\begin{figure}[H]
\centering
\subcaptionbox{Double X  $(6,2)$, $\kappa=-4\gamma$  \label{fig:X}}
{\includegraphics[width=.32\linewidth]{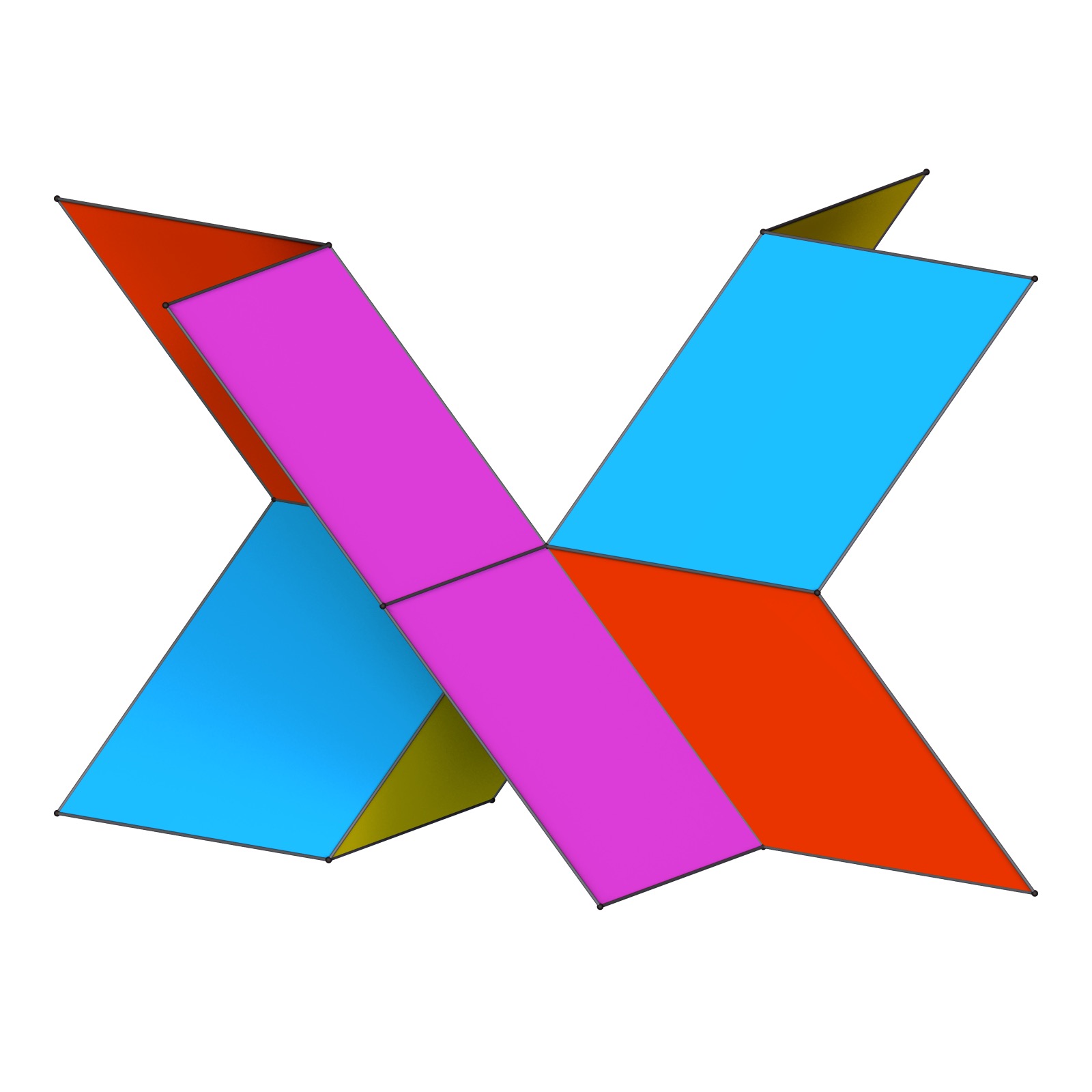}}
\subcaptionbox{Star $(6,2)$, $\kappa=-4\gamma$\label{fig:Star}}
{\includegraphics[width=.32\linewidth]{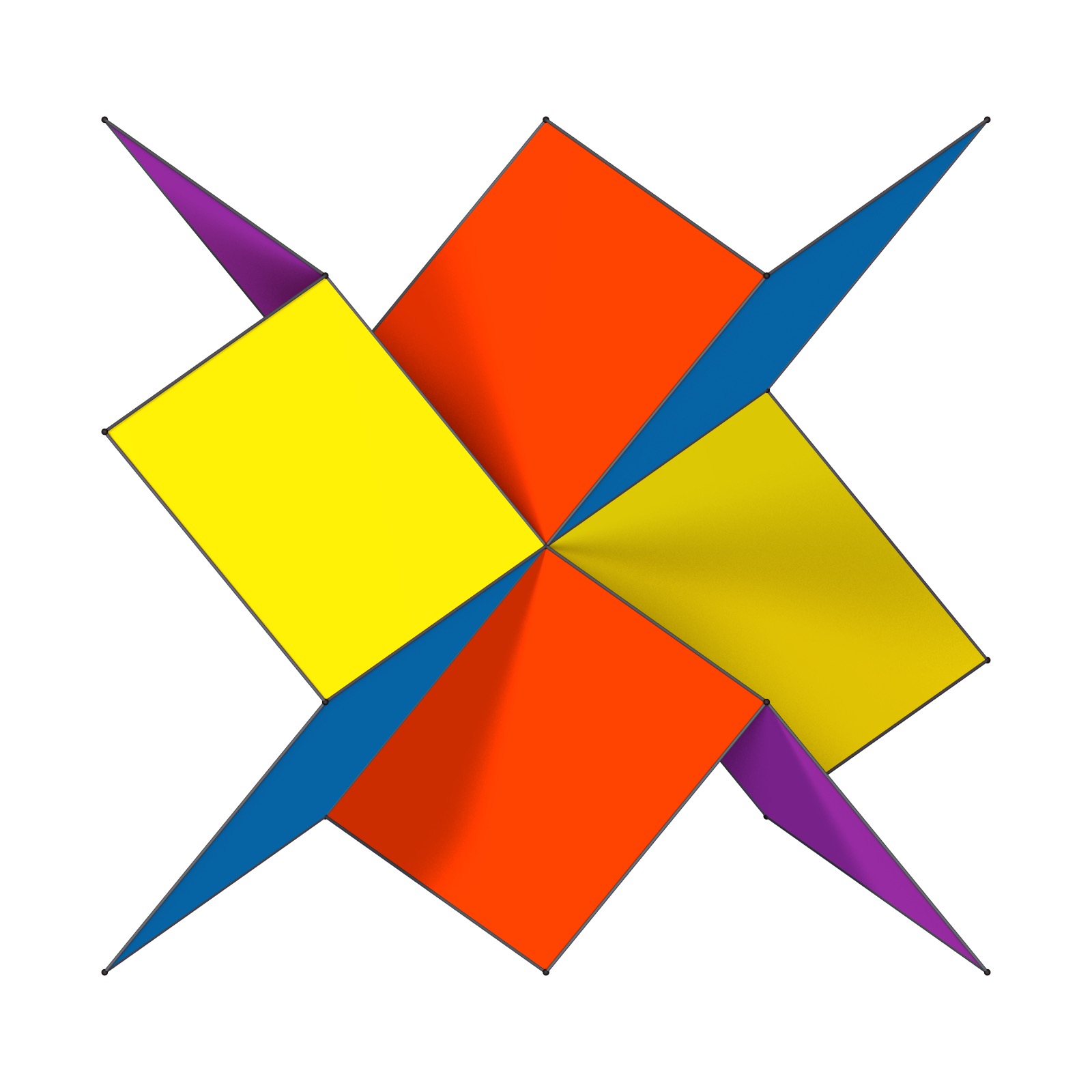}}
\subcaptionbox{Double L $(4,4)$, $\kappa=-2\pi$\label{fig:LL}}
{\includegraphics[width=.32\linewidth]{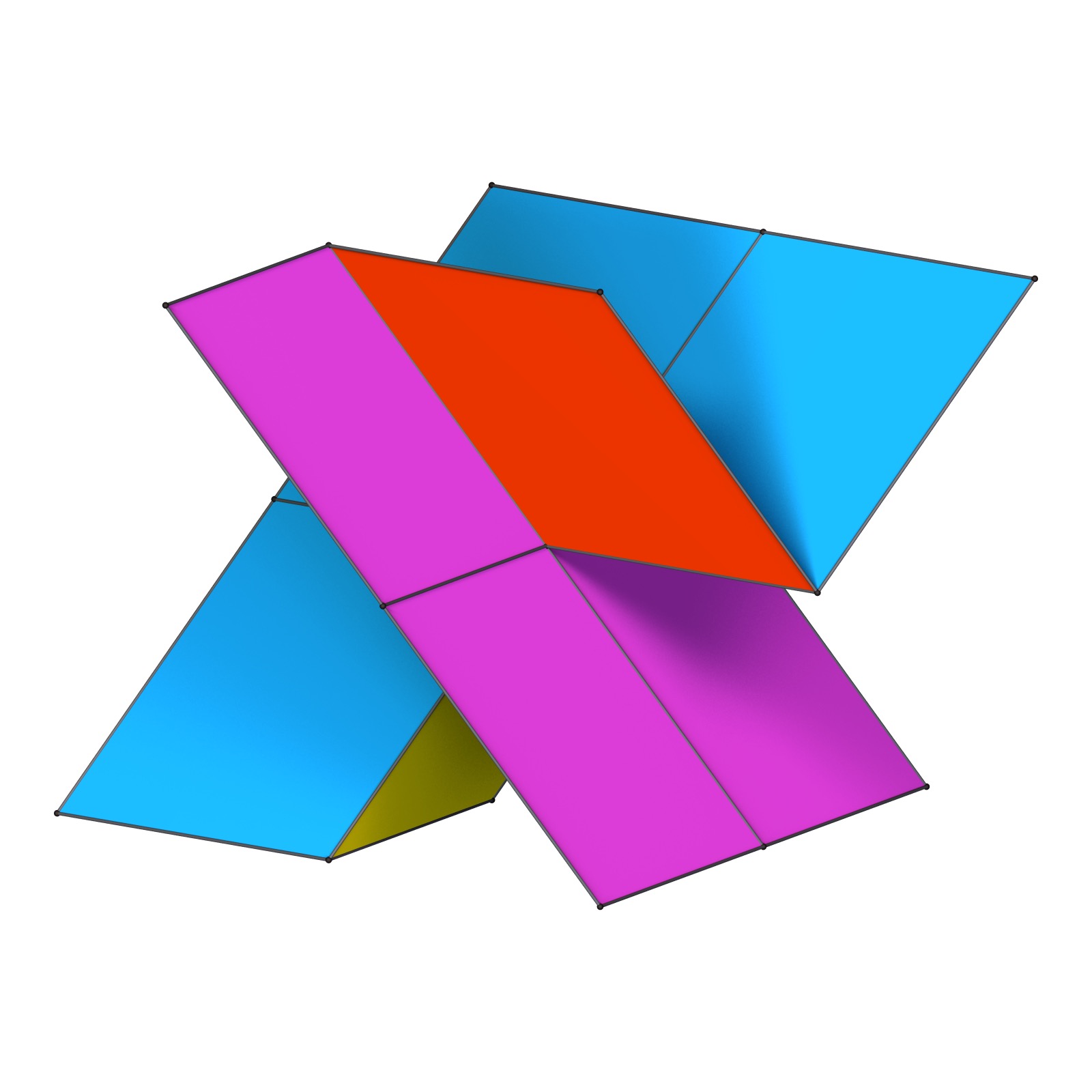}}
\end{figure}

We have found that any of these vertex types can be used in an infinite doubly or triply periodic $\Sigma$-polyhedron.
 
\section{Simple Doubly Periodic Examples}
\label{sec:simple}

In this section, we review two  well known doubly periodic examples. The first is the {\em Eggbox pattern} \cite{alfred1968}.

\begin{figure}[H]
      \centering
      \subcaptionbox{$\alpha=25^\circ$}
        {\includegraphics[width=0.3\textwidth]{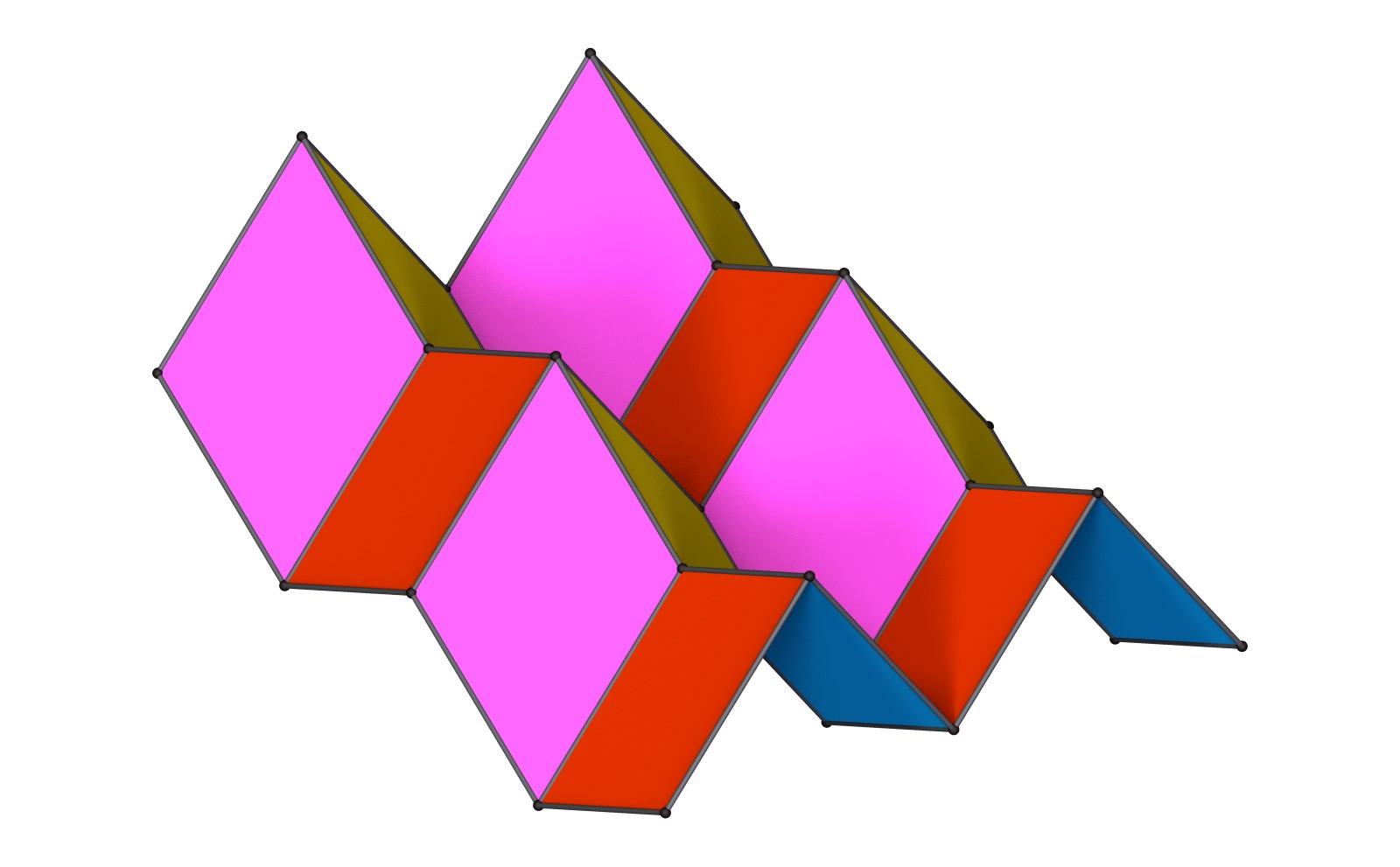}}
      \subcaptionbox{$\alpha=35^\circ$}
        {\includegraphics[width=0.3\textwidth]{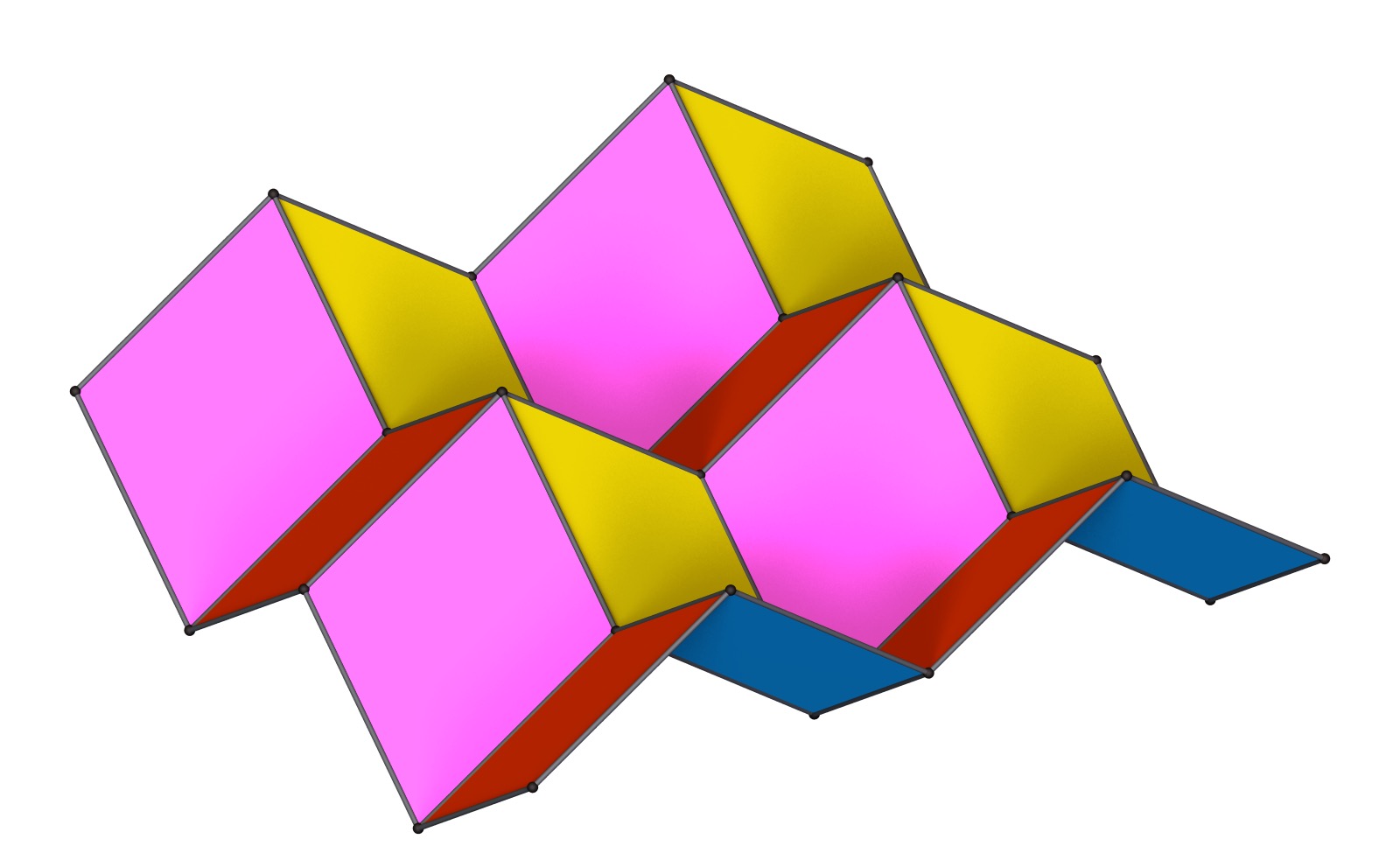}}
        \subcaptionbox{$\alpha=65^\circ$}
        {\includegraphics[width=0.3\textwidth]{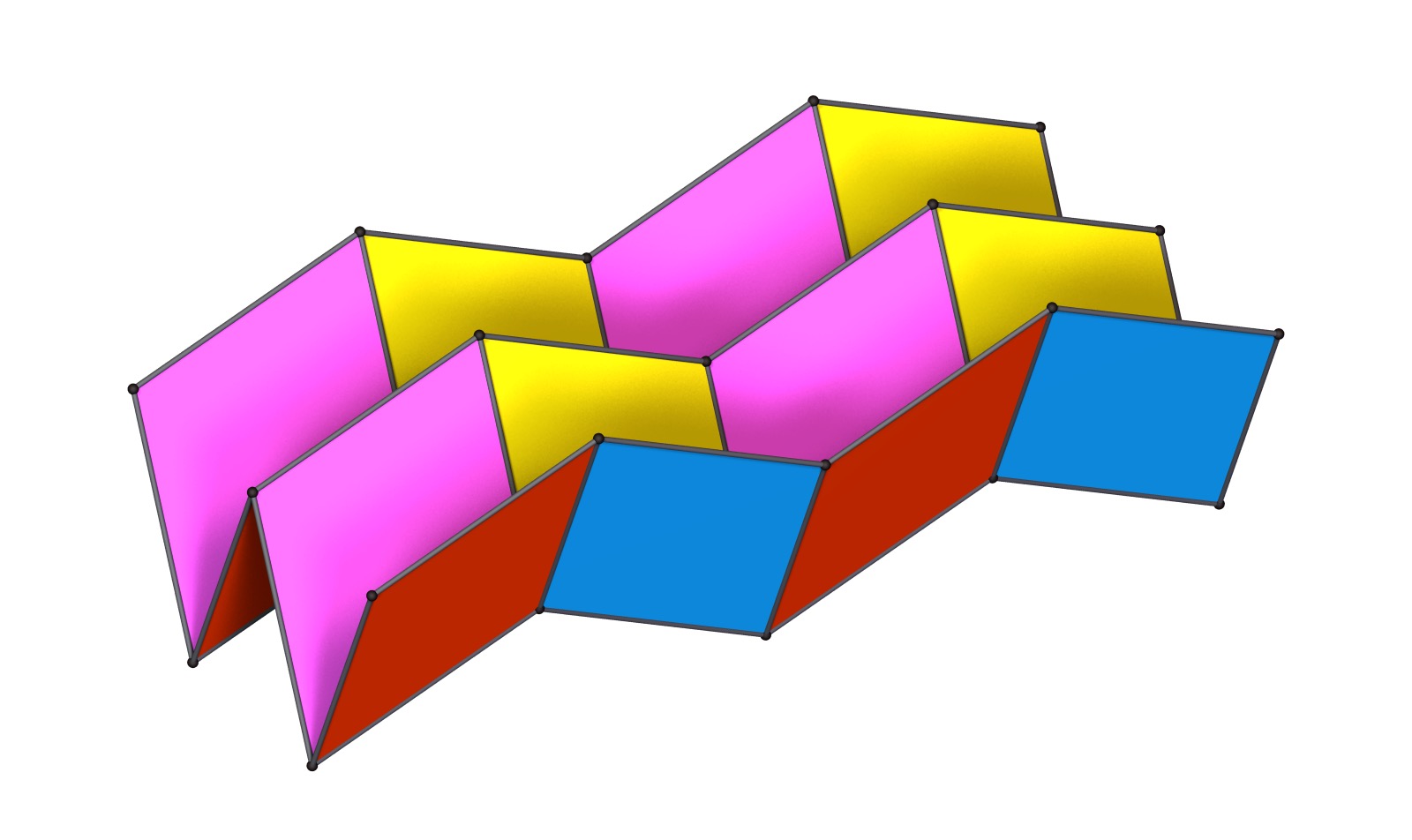}}
      \caption{Three states of the Eggbox pattern}\label{fig:Eggbox}
    \end{figure}
    
In our context a translational fundamental piece is the saddle consisting of  $\Pi_{13}$, $\Pi_{14}$, $\Pi_{23}$ and $\Pi_{24}$, and the translation vectors are
$v_1-v_3$ and $v_3-v_4$.

Another example is the {\em Miura pattern} \cite{miura1969}.

\begin{figure}[H]
      \centering
      \subcaptionbox{$\alpha=25^\circ$}
        {\includegraphics[width=0.3\textwidth]{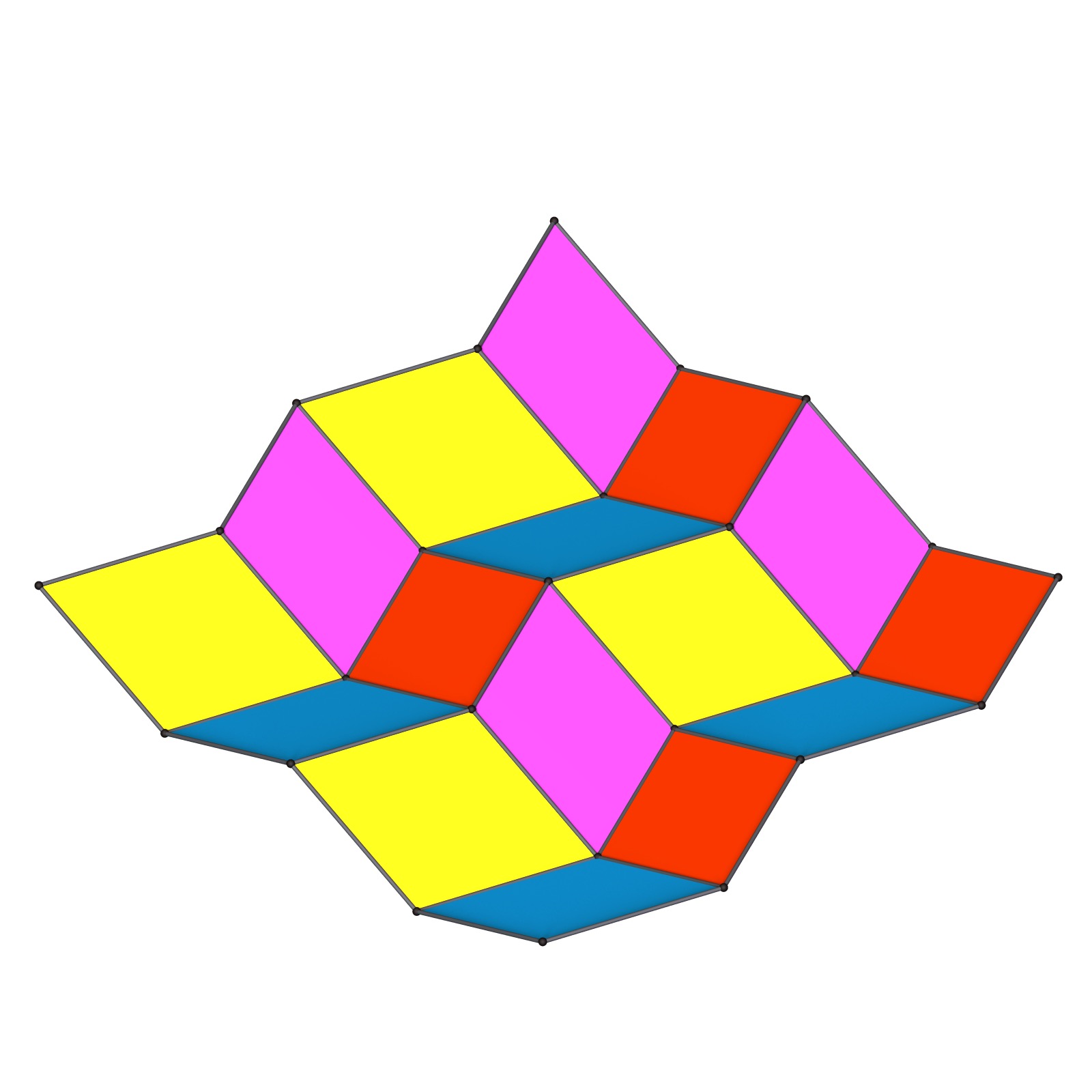}}
      \subcaptionbox{$\alpha=35^\circ$}
        {\includegraphics[width=0.3\textwidth]{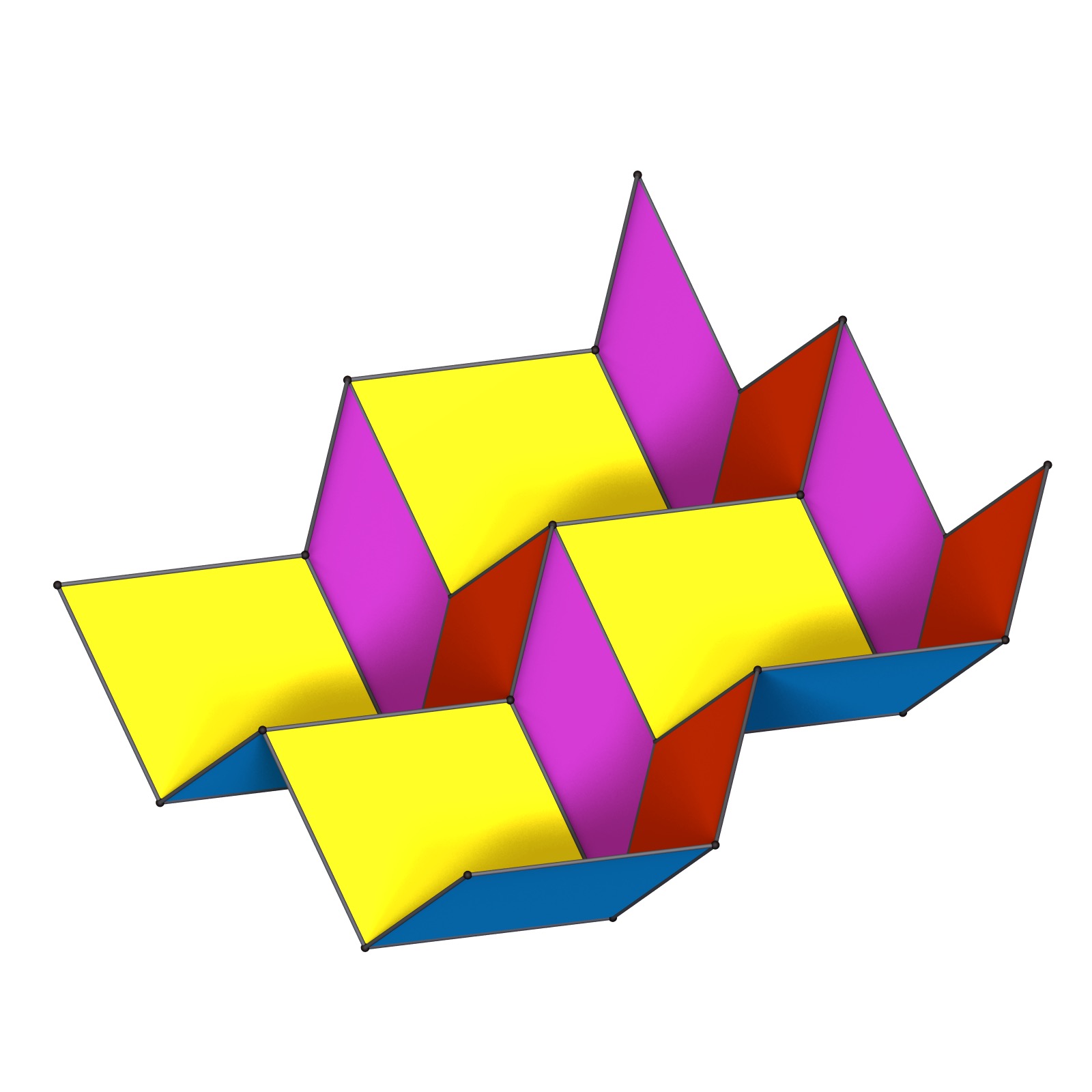}}
        \subcaptionbox{$\alpha=65^\circ$}
        {\includegraphics[width=0.3\textwidth]{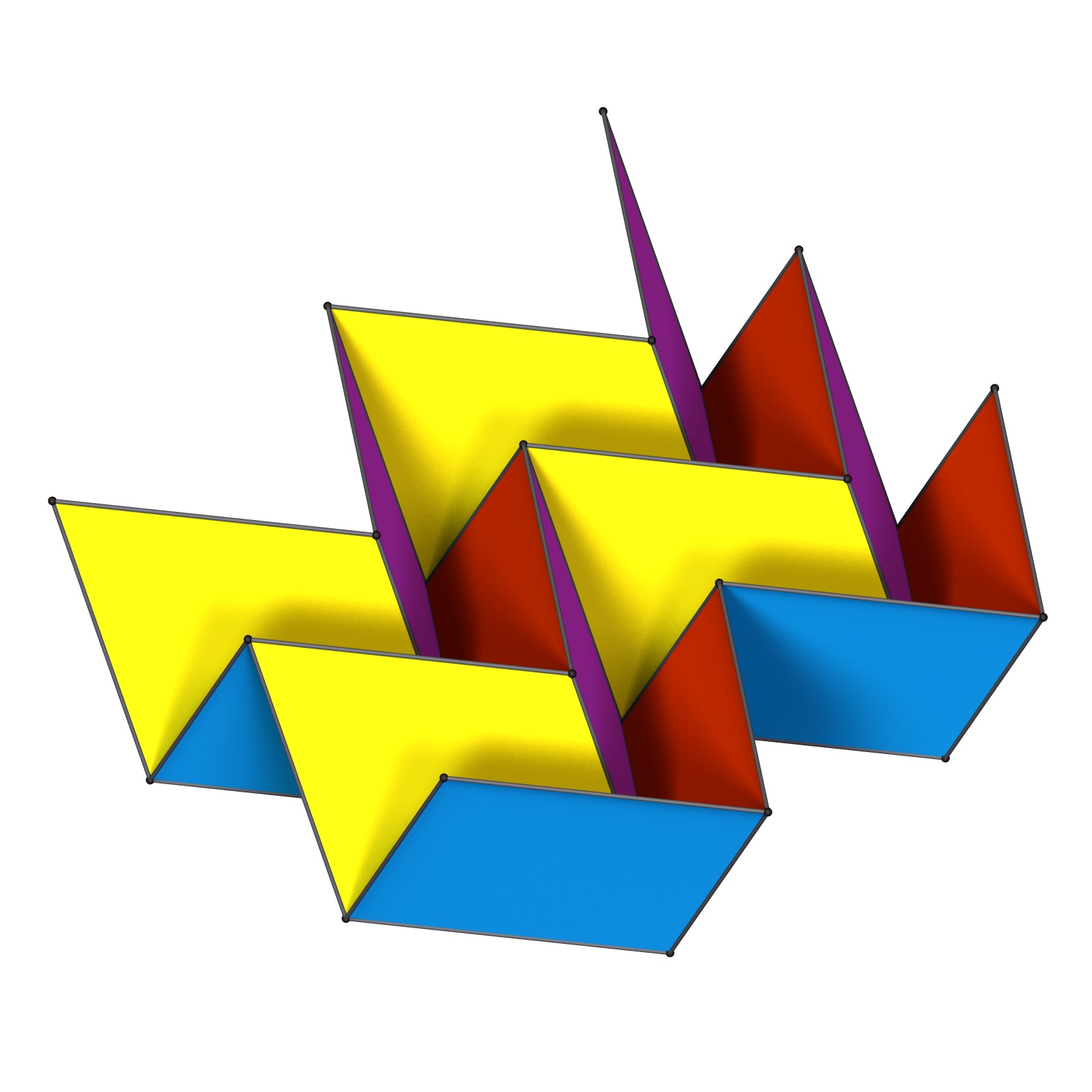}}
      \caption{Three states of the Miura pattern}\label{fig:MiuraPattern}
    \end{figure}
    
  Here, a   translational fundamental piece is also the union of the four admissible facets, but arranged differently, namely as
   $\Pi_{13}$, $\Pi_{14}$, $\Pi_{23}-v_2$ and $\Pi_{24}-v_2$, where we use addition of a vector to denote translation.
   The translational periods are given by $v_1+v_3$ and $v_3-v_4$.

\section{A Bifoldable Fractal}
\label{sec:fractal}

Let $H_i$ be the 2-skeleton of the parallelepiped spanned by the four star vectors except $v_i$, with the forbidden facets removed.
We call $H_i$ a {\em hollowped}. Each $H_i$ is a polyhedral cylinder bounded by four parallelograms. Note that the  four corresponding {\em solid} parallelepipeds tile a solid dodecahedron, which we call the {\em standard dodecahedron}. In the case of the tetrahedral star this is the well-known decomposition of the  rhombic dodecahedron into four congruent rhomboids.

We now use the hollowpeds to inductively construct a fractal bifoldable polyhedron.

The generation 0 fractal $\fF_0$ is the union $H_1\cup H_2\cup H_2\cup H_2$ of all four hollowpeds, see Figure \ref{fig:fractal0}.

\begin{figure}[H]
      \centering
      \subcaptionbox{$\alpha=24^\circ$}
        {\includegraphics[width=0.3\textwidth]{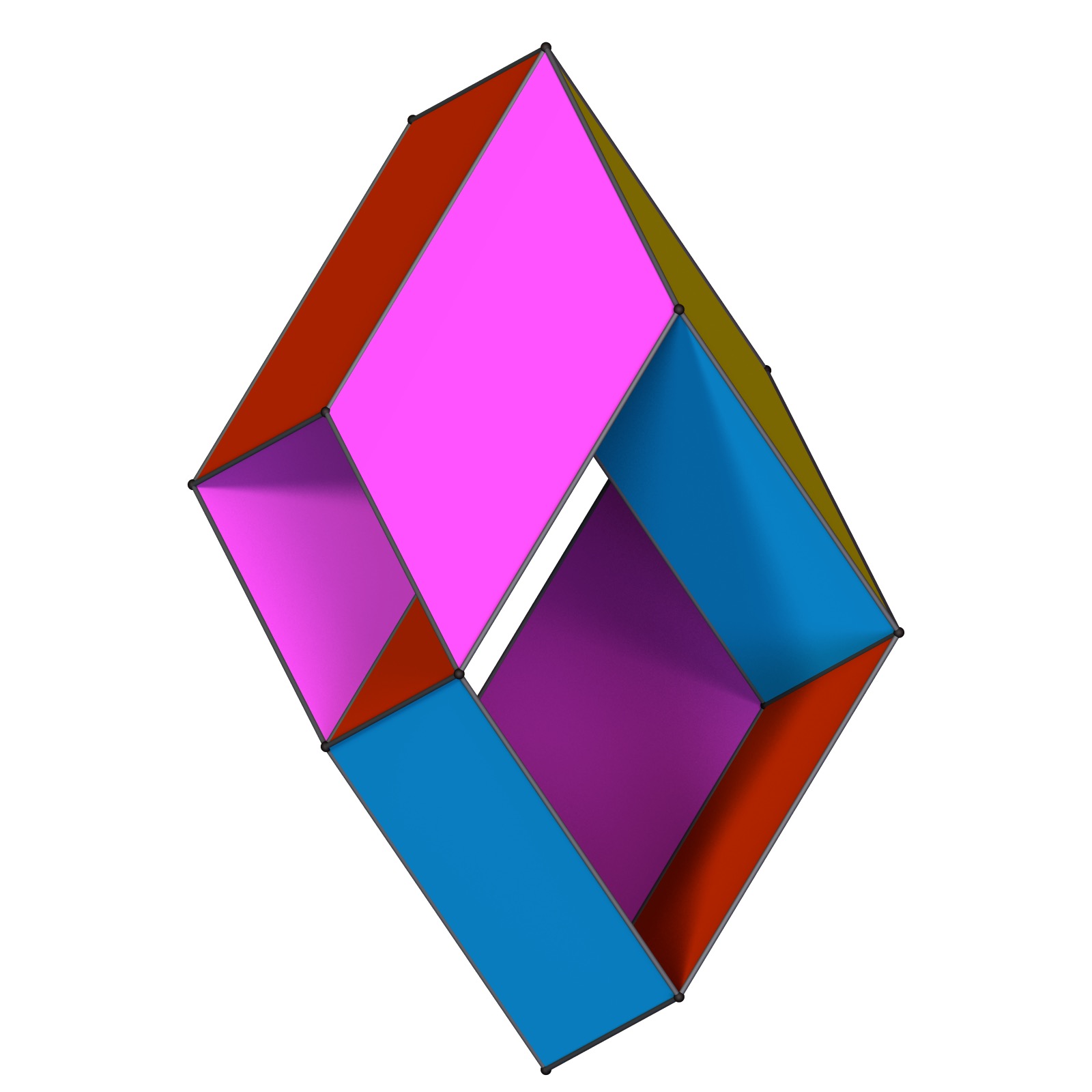}}
      \subcaptionbox{$\alpha=35^\circ$}
        {\includegraphics[width=0.3\textwidth]{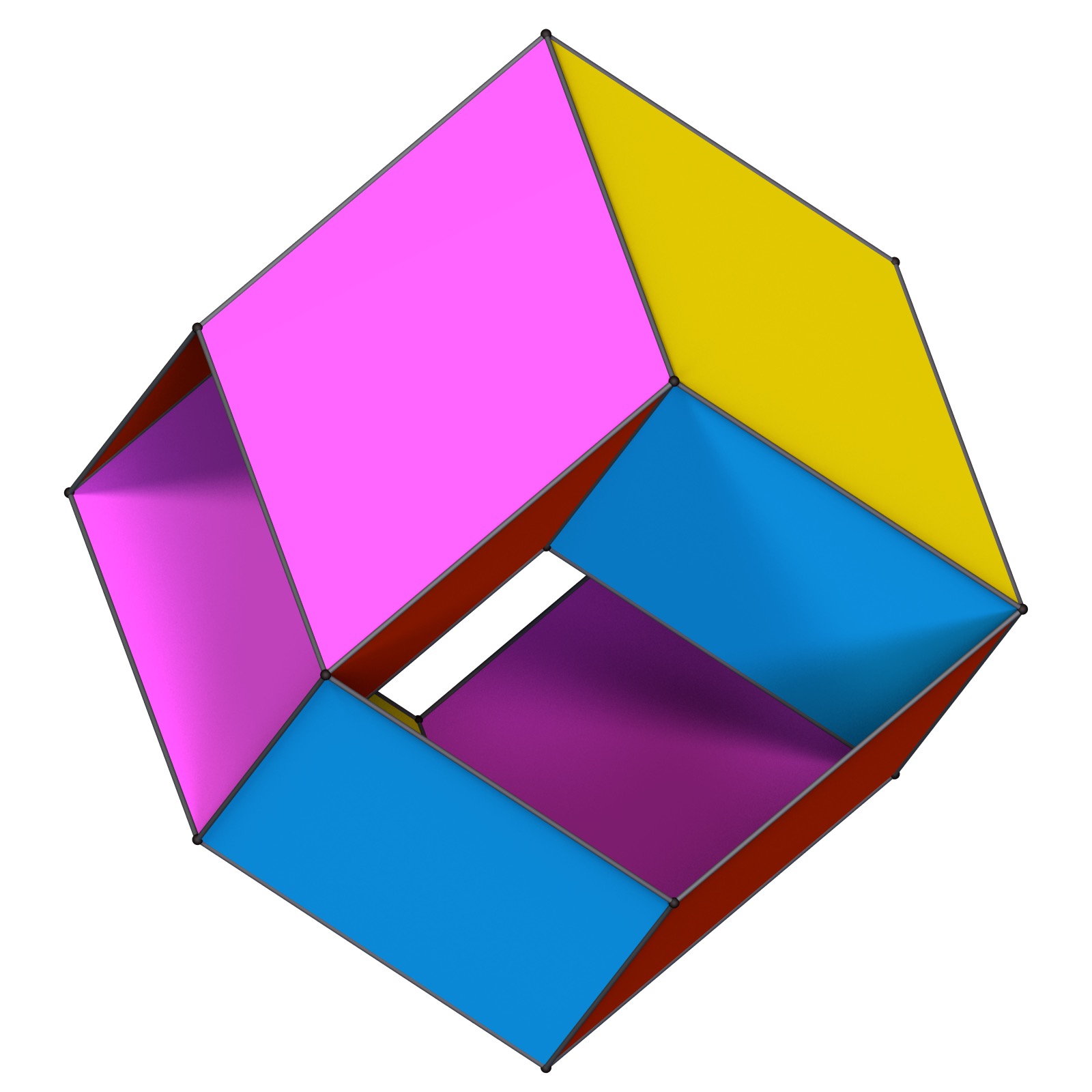}}
        \subcaptionbox{$\alpha=65^\circ$}
        {\includegraphics[width=0.3\textwidth]{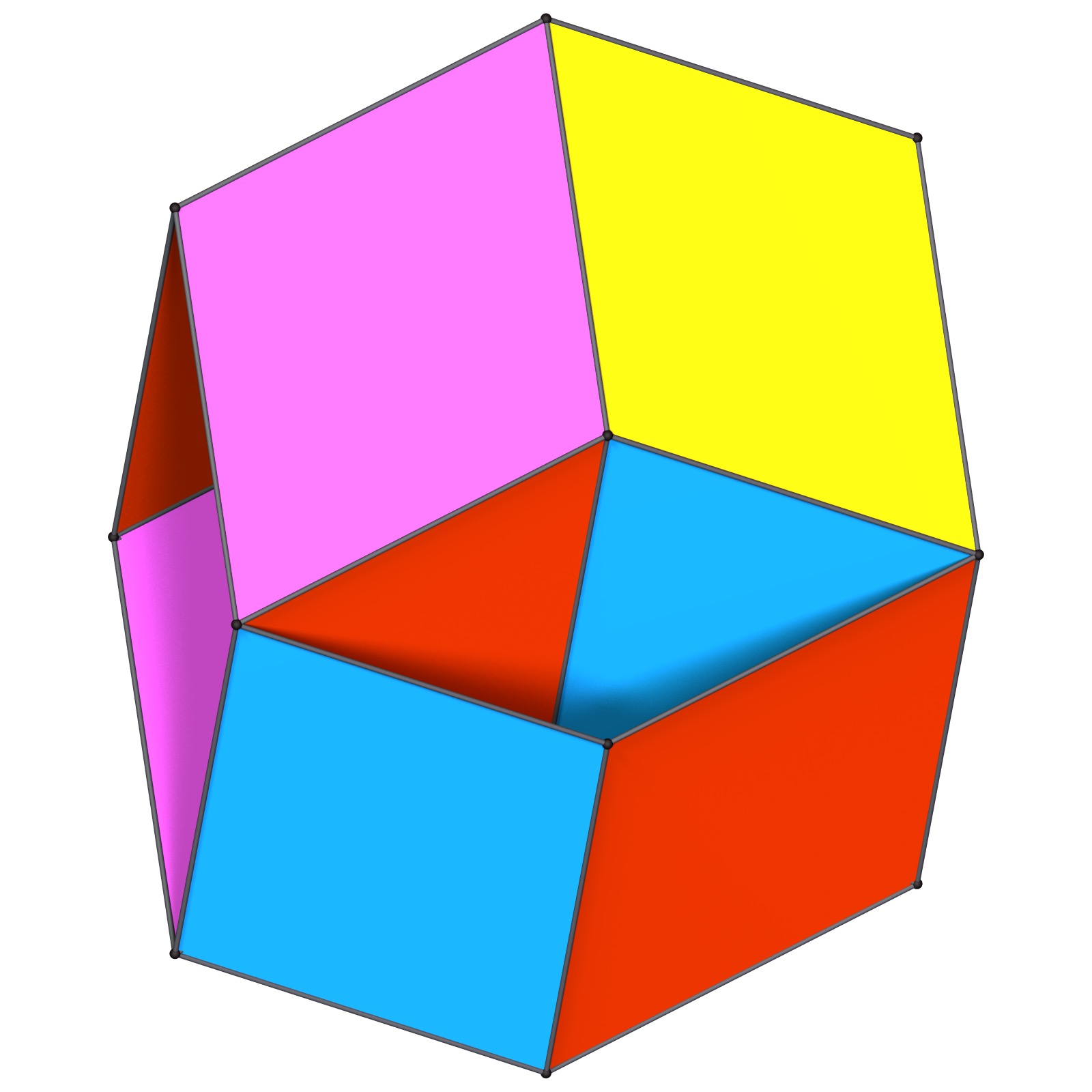}}
      \caption{Three states of the generation 0 fractal $\fF_0$}
      \label{fig:fractal0}
    \end{figure}
    
This shape and its deformability has been discussed before in \cite{cheung2014}, where translational copies of it are used to build a polyhedral complex.

We can think of $\fF_0$  as consisting of a roof, a congruent bottom, and a central saddle. Note that this is a polyhedron with octagonal boundary which is up to similarity identical to the boundary of the saddle. This suggests that whenever we cut a smaller saddle out of a larger saddle, we can fill the hole with a scaled copy of $\fF_0$. 

To make this concrete, we subdivide each parallelogram of $\fF_0$ into four smaller parallelograms, and eliminate the four small parallelograms at the center of the saddle. This turns the saddle into a polyhedral annulus, with two saddle shaped boundary octagons. We can fit a scaled copy of $\fF_0$ into the inner boundary and arrive at the generation 1 fractal $\fF_1$, shown in Figure \ref{fig:Fractal-1}.

\begin{figure}[h]
      \centering
      \subcaptionbox{$\fF_1$ \label{fig:Fractal-1}}
        {\includegraphics[width=0.45\textwidth]{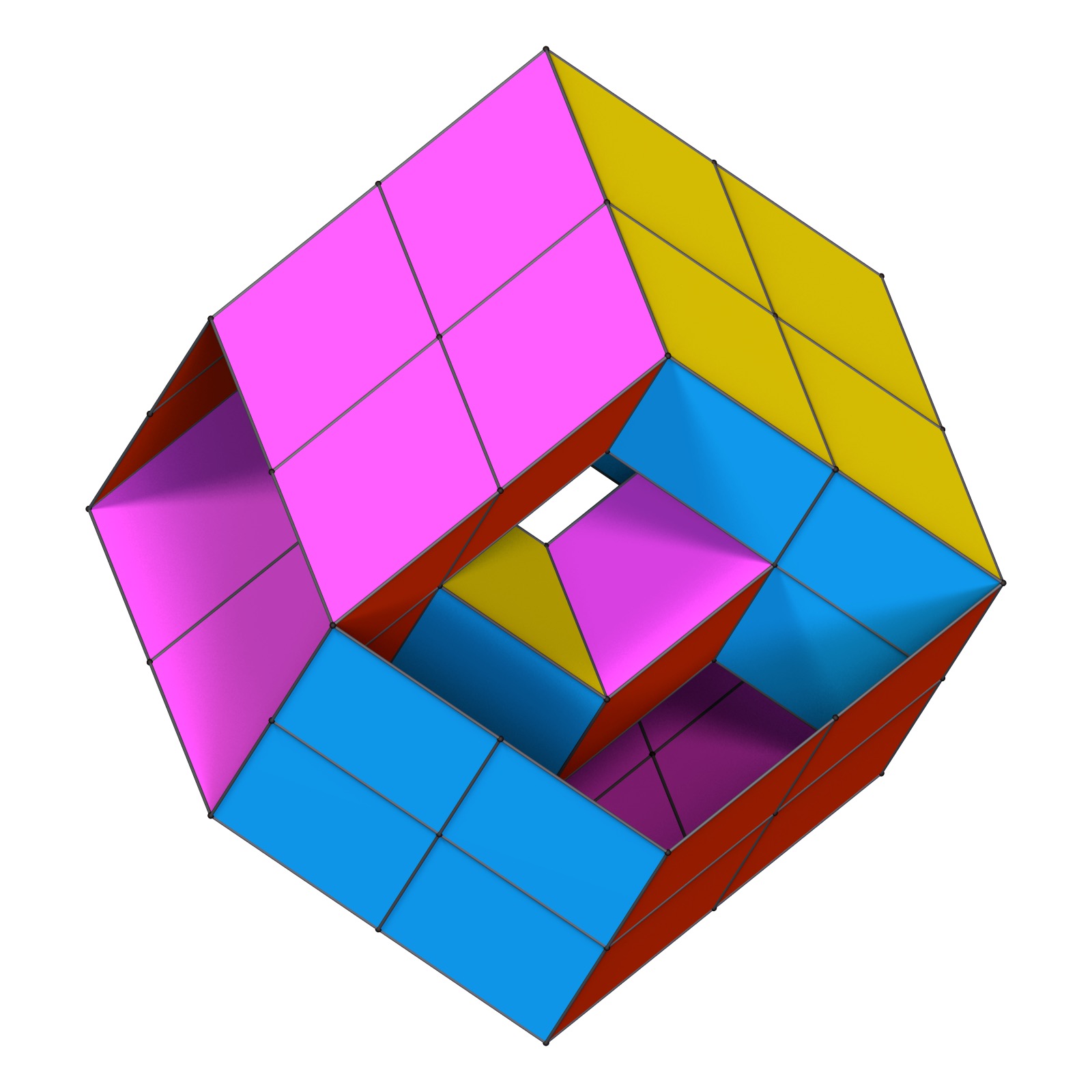}}
      \subcaptionbox{$\fF_2$ \label{fig:Fractal-2}}
        {\includegraphics[width=0.45\textwidth]{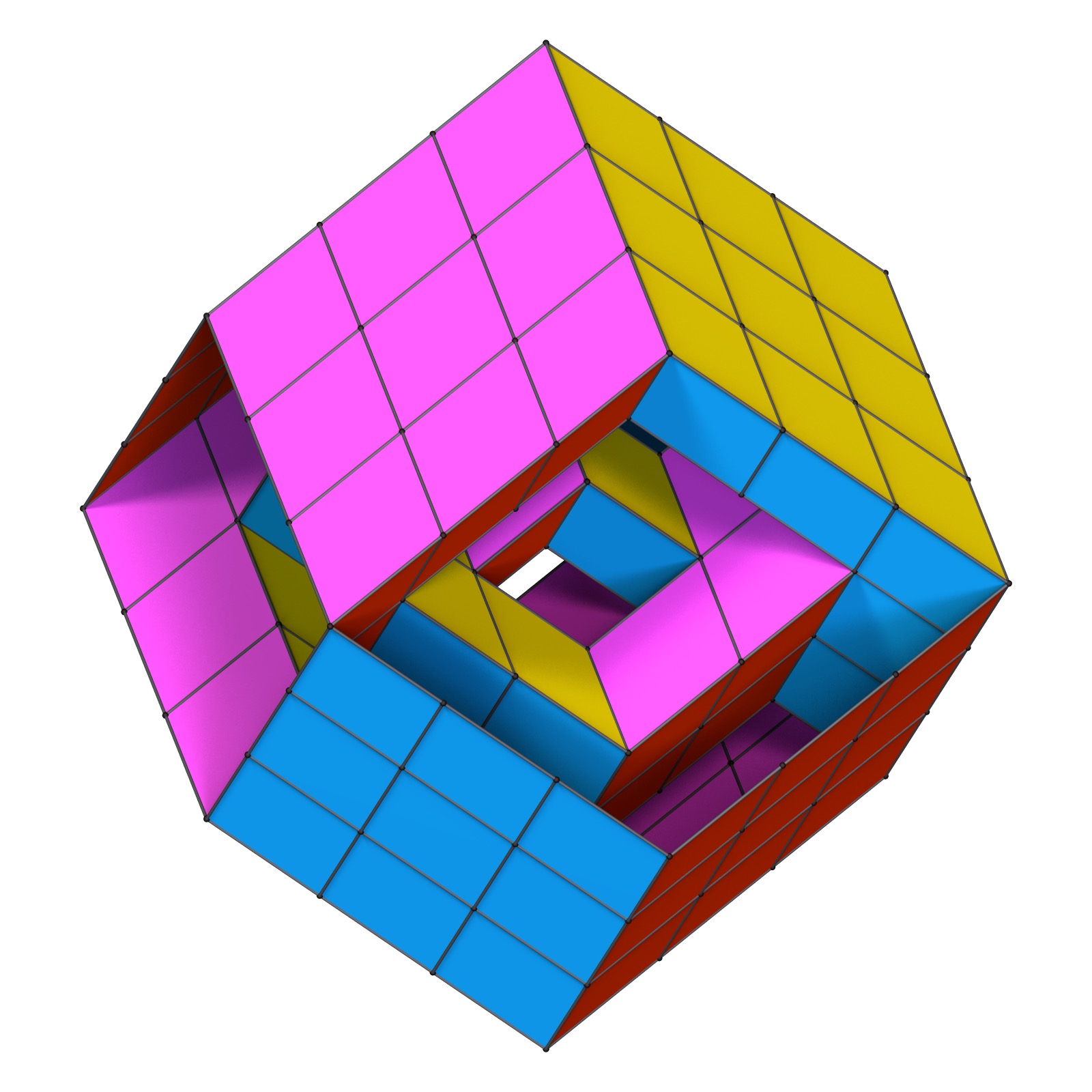}}
      \caption{Fractal generations}
      \label{fig:fractal12}
    \end{figure}

This process can be repeated indefinitely, creating an infinite polyhedron that is the union of polyhedral annuli. By letting the size of these annuli   grow exponentially instead of linearly, one can obtain a  $\Sigma$-polyhedron that is self-similar by a homothety and that has a singular point at the origin.

\begin{remark}
This fractalization procedure can be carried out whenever a bifoldable polyhedral complex contains a vertex of saddle type. It is one of the few examples of local surgery that we found possible with $\Sigma$-polyhedra.
\end{remark}

\section{The Design of Bifoldable Polyhedral Complexes}
\label{sec:design}

To design a $\Sigma$-complex, the following construction method can be used: 

Use a small $\Sigma$-polyhedron like the generation 0 fractal as a template that shows how the facets can be attached. 
Distinguish the  four admissible facets by color,  both for the template and the polyhedron you are building. Thus each color also represents one of the four planes spanned by $v_1\wedge v_3$, $v_1\wedge v_4$, $v_2\wedge v_3$, and $v_2\wedge v_4$. Note that the facets parallel to $v1 \wedge v2$ and to  $v_3 \wedge v_4$ are the  forbidden facets. 

Then begin with any of the fourteen vertex types from section \ref{sec:vertices} and build the vertex figure by placing a colored facet at an existing edge so that it is parallel to the facet of the same color in your  template. 

Notice that at any given edge of a facet, there are at only four facets that can use this edge, of only two different colors. Thus to extend a $\Sigma$-polyhedron across one edge of a specific facet, there are at most three different ways to do so. Some of these might be impossible as they would cause the added facet to intersect with other parts of the polyhedron, as shown in Figure \ref{fig:FourWays}.

This procedure will ensure that the polyhedral complex will be built using only star vectors as edges and using only the admissible facets.

Whenever your polyhedron or polyhedral complex closes up (i.e. has vertices are edges meet), you need to test its flexibility by verifying the homological condition. This arises when adding a facet creates a new closed loop of edges, and you need to verify that this loop uses each type of edge as often forward as backward, when followed once around.

We also mention a second way to design $\Sigma$-complexes. You can begin with a tiling of a portion of space by the four parallelepipeds $R_{ijk}$, where $i,j,k$ designate three different numbers from $\{1,2,3,4\}$, and $R_{ijk}$ is spanned by  $v_{i_1}$, $v_{i_2}$ and $v_{i_3}$. Then remove from all parallelepipeds the forbidden facets to obtain the four hollowpeds. The result is always a $\Sigma$-complex. By removing more facets where more than two facets meet at the same edge, you can then try to make the $\Sigma$-complex a $\Sigma$-polyhedron.

A $\Sigma$-complex becomes periodic when it becomes invariant under translations  in one ore more dimensions. We will present examples of bifoldable  periodic $\Sigma$-complexes in the following sections.

\section{The Origami Tube and Weave}
\label{sec:weave}

In this section, we create a doubly periodic version of the Origami tube.
The Origami tube \cite{miura1969,filipov2015} is the union of $\Pi_1$ and $\Pi_3$ and invariant under translations by $v_3-v_4$, see Figure \ref{fig:MiuraTube}.

\begin{figure}[H]
   \centering
   \includegraphics[width=4in]{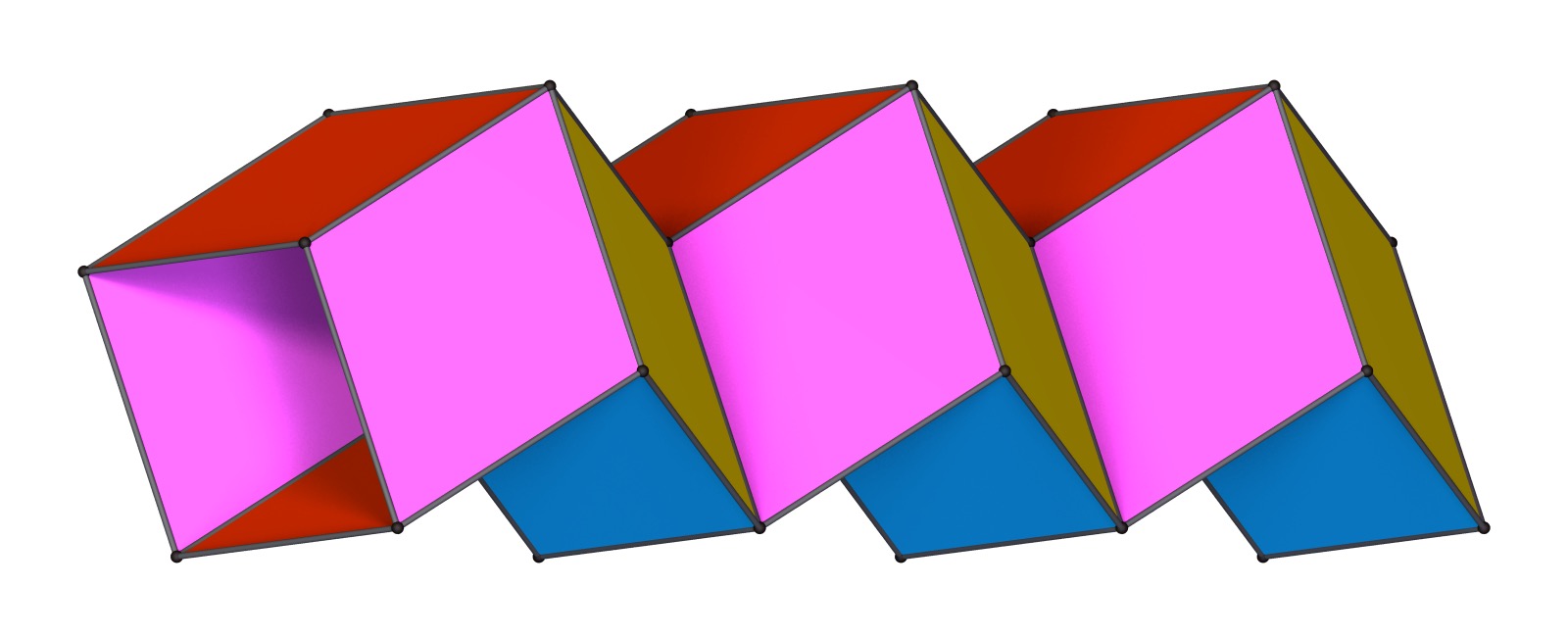}
   \caption{The Origami Tube}
   \label{fig:MiuraTube}
\end{figure}

Using our framework, it is easy to create a doubly periodic pattern of interwoven Origami tubes. To see this, we begin by arranging three facets of type $\Pi_{14}$ into an  L-shape, and we do likewise  with  three facets of type $\Pi_{23}$. These two $L$-shapes are then joined using two single facets $\Pi_{13}$ and $\Pi_{24}$ to create what we called the {\em Double L}, an arrangement of eight facets around a single vertex, see Figure \ref{fig:LL}. 

Four of the Double L shapes can then be combined (using mirrored copies) to create a translational fundamental piece of the {\em Miura Weave}.

\begin{figure}[H]
   \centering
   \includegraphics[width=3in]{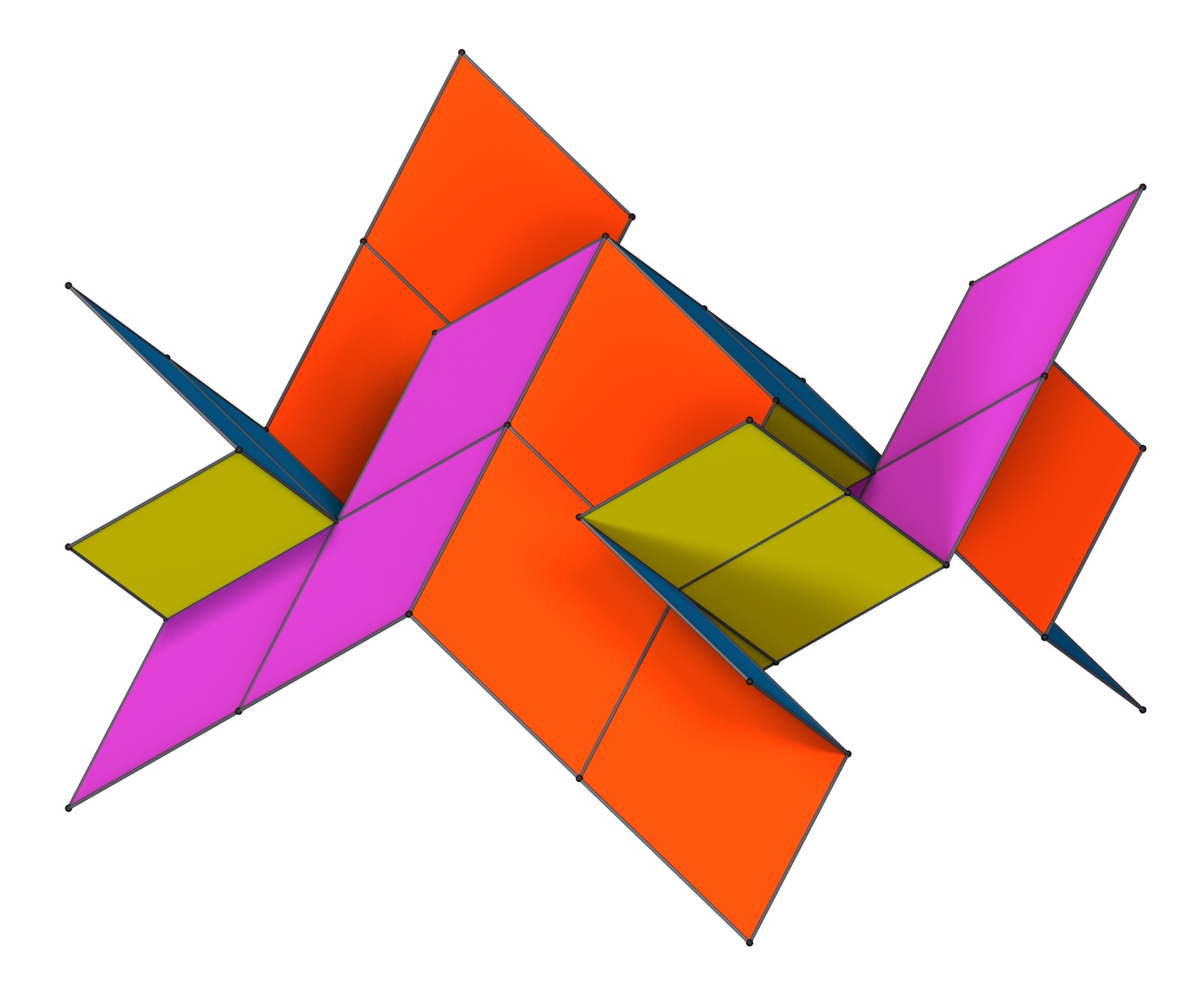}
   \caption{A translational fundamental piece}
   \label{fig:MiuraWeave-2}
\end{figure}

Replicating the fundamental piece using translations by $2(v_3-v_4)$ and $2(v_1-v_2)$ creates a doubly periodic polyhedral carpet without boundary.
Remarkable, the folding happens in the two translational directions, allowing to compress the entire carpet into a thin strip (Figure \ref{fig:MiuraWeave-4}).

\begin{figure}[H]
      \centering
      \subcaptionbox{Symmetric state\label{fig:MiuraWeave-3}}
        {\includegraphics[width=0.49\textwidth]{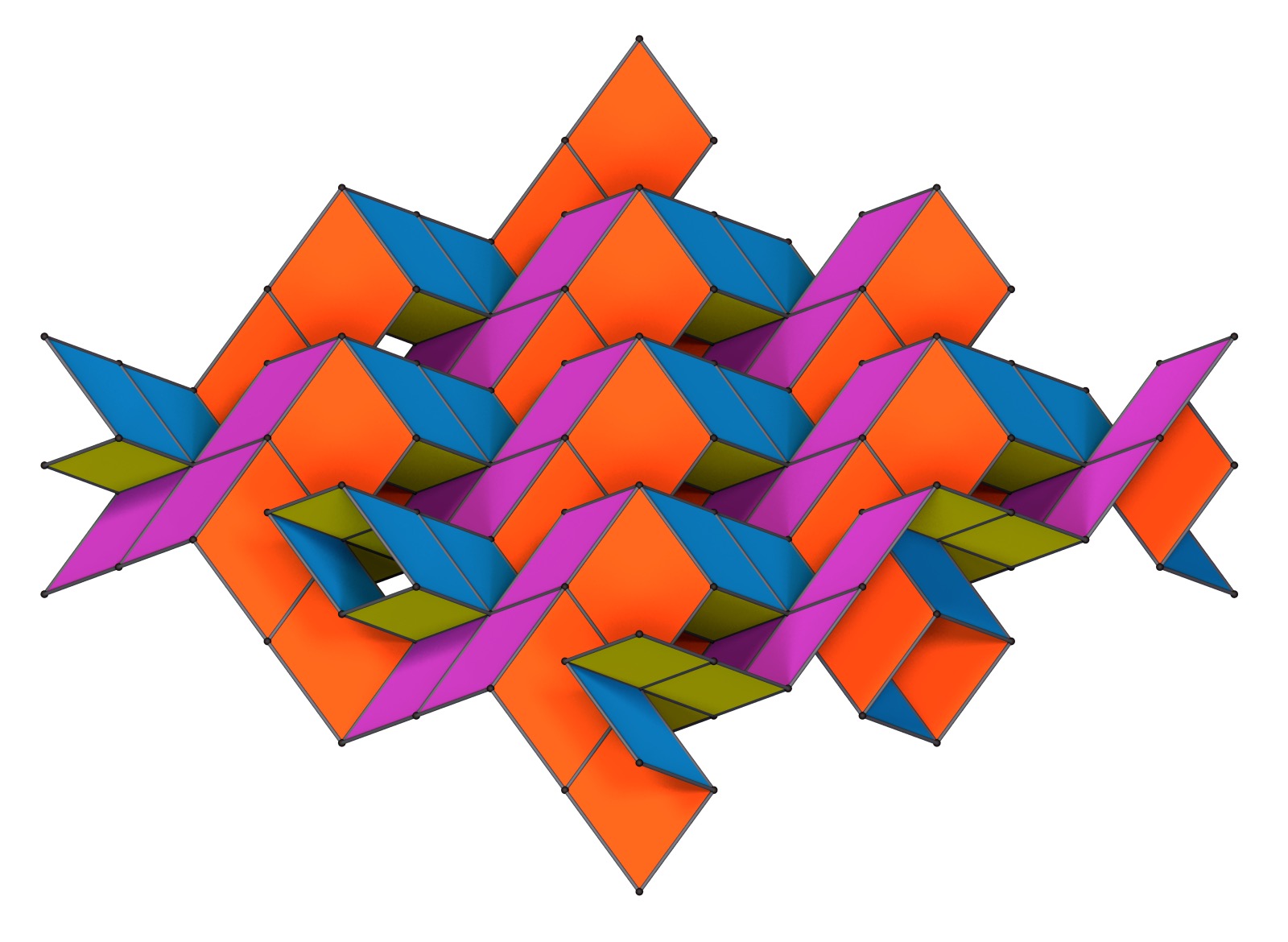}}
      \subcaptionbox{Compressed state\label{fig:MiuraWeave-4}}
        {\includegraphics[width=0.49\textwidth]{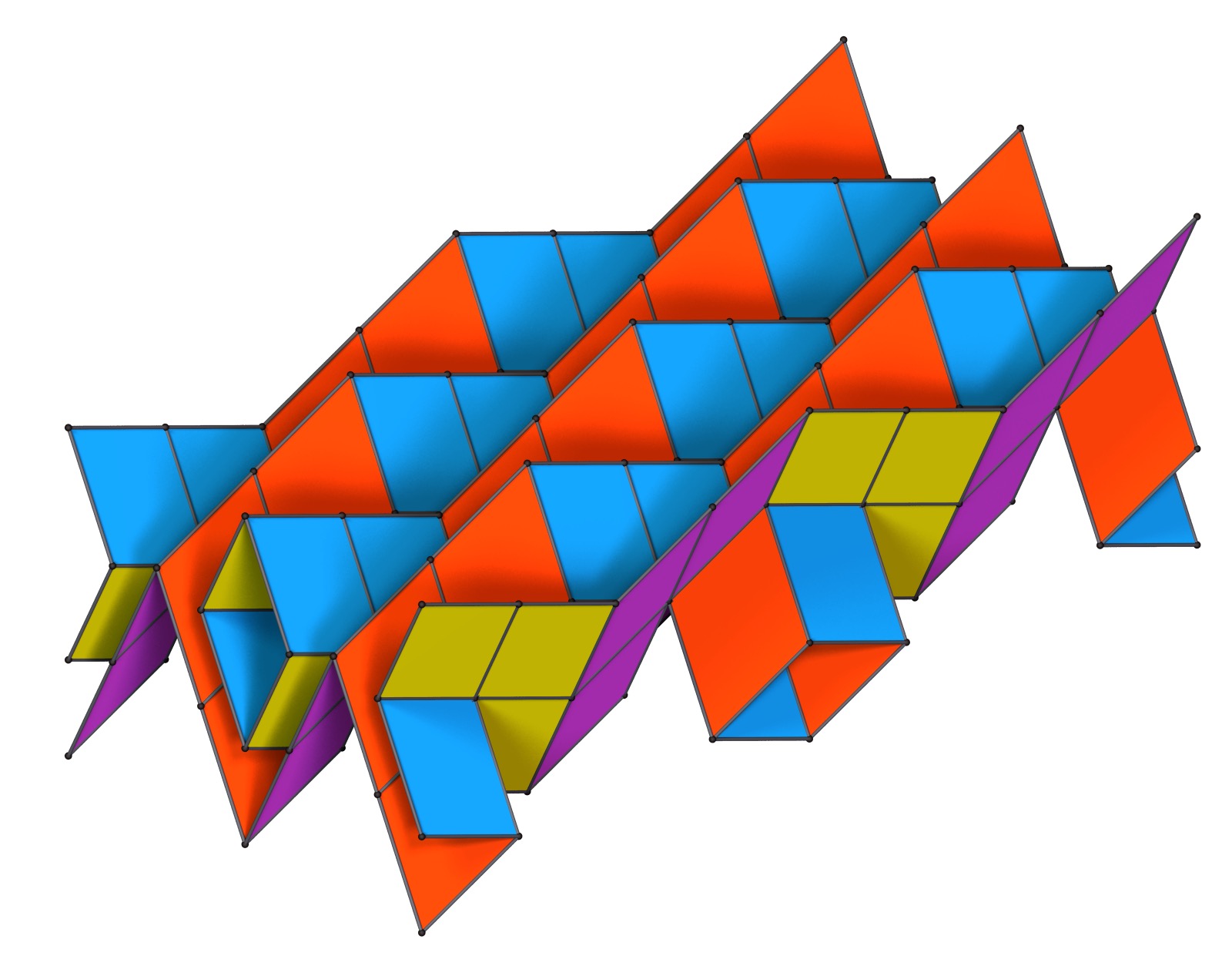}}
      \caption{Folding States of the Miura Weave}
      \label{fig:MiuraWeave}
    \end{figure}

\section{Doubly and Triply Periodic Links}
\label{sec:links}

In this section, we describe examples of doubly and triply periodic $\Sigma$-polyhedra. The basic building block consists of  two different hollowpeds  sharing an admissible facet. For the sake of concreteness and reproducibility, we use the hollowpeds $H_2$ and $H_4$ with common facet $\Pi_{13}$, which we call the {\em link}, see Figure \ref{fig:linksingle}. Note that the link only utilizes three of the four admissible facets, all except for $\Pi_{24}$.

\begin{figure}[H]
      \centering
      \subcaptionbox{the link\label{fig:linksingle}}
        {\includegraphics[width=0.49\textwidth]{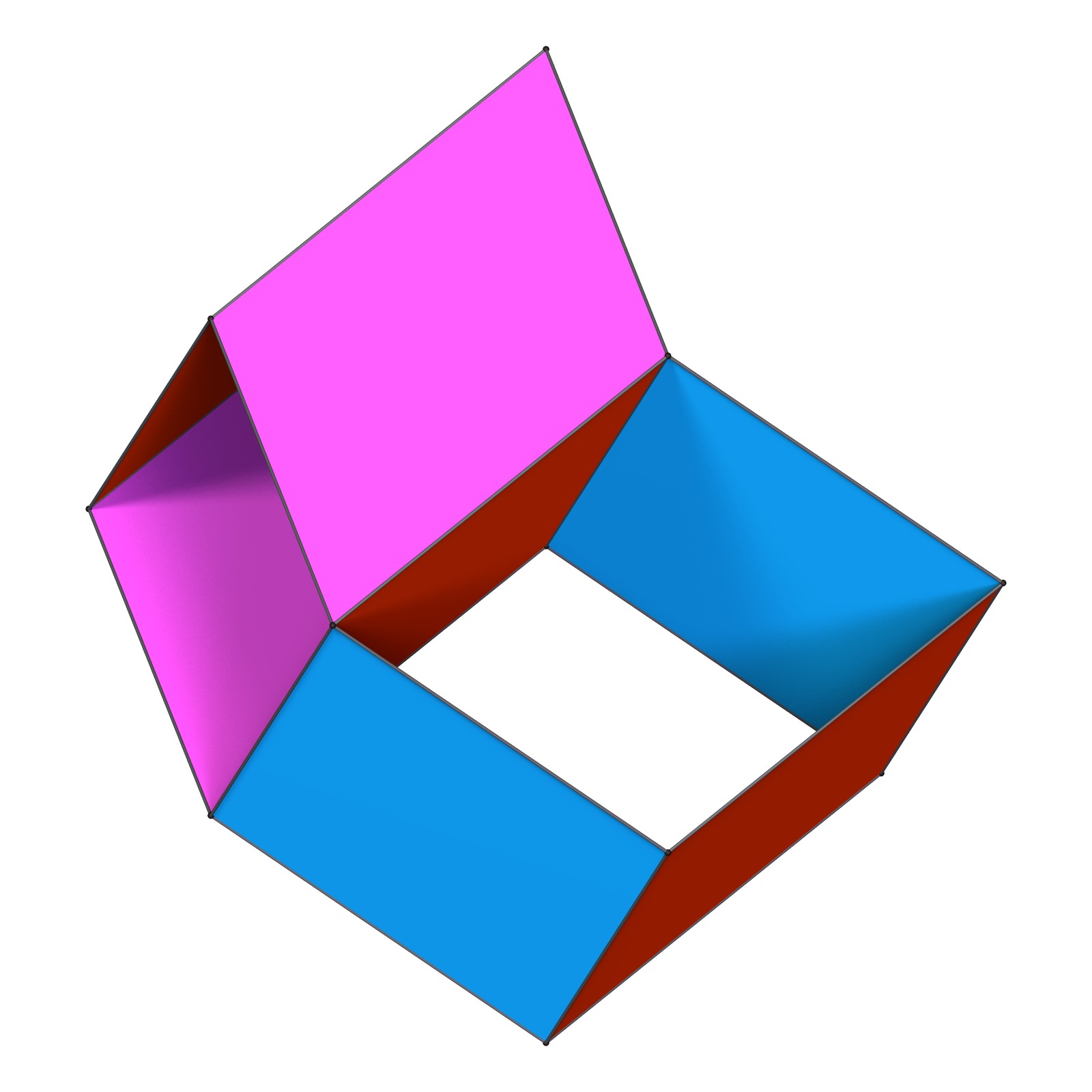}}
      \subcaptionbox{translated copies\label{fig:linktriply}}
        {\includegraphics[width=0.49\textwidth]{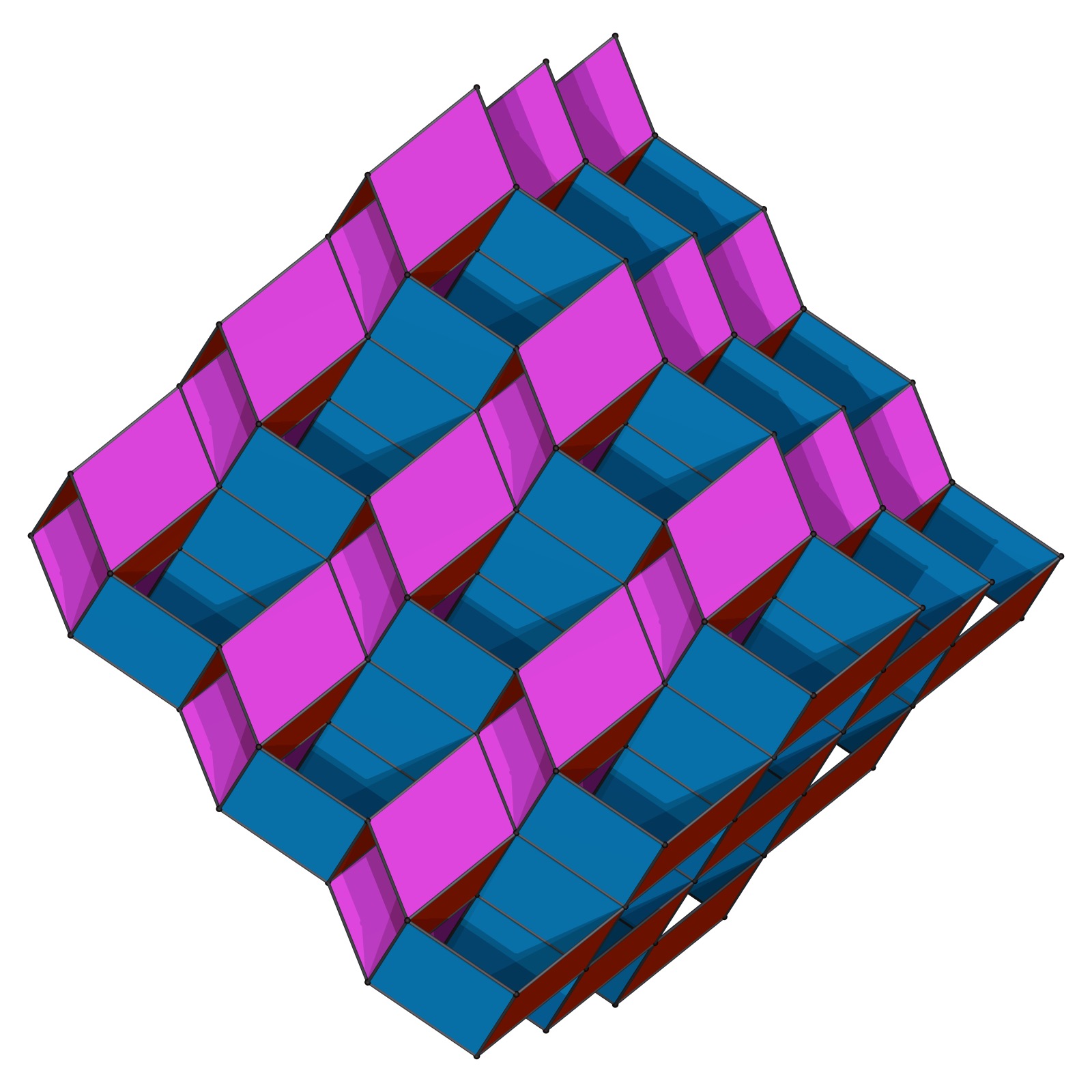}}
      \caption{The triply periodic link}
      \label{fig:link}
    \end{figure}

Let $\Lambda$ be the lattice spanned by $v_1-v_3$, $v_2-v_4$ and $v_1+v_3$. Then translating the link by lattice vectors produces a triply periodic bifoldable polyhedron supported by $\Sigma$, see Figure \ref{fig:linktriply}. The lack of the fourth facet further increases the flexibility of this polyhedron, which might be an undesirable feature.

The genus of a  triply periodic surface is commonly defined as the genus of the quotient by the maximal group of orientation preserving translations that leave the surface invariant. Note that in this case, the translation by $v_2-v_4$ is orientation reversing, so two links are needed for a fundamental domain.
The quotient surface then has eight vertices with valency 6, four of type \ref{fig:AcuteX} and four of type \ref{fig:ObtuseX}, with total curvature $-16\pi$ so that the genus is 5.
 
\medskip

There is a more interesting variation of this construction that also employs the fourth facet $\Pi_{24}$: We first combine a link and its mirror image (which also consists of $\Pi_1$  and $\Pi_3$ but translated differently) and connect them with copy of the facet $\Pi_{24}$. We call this piece the {\em butterfly}:

\begin{figure}[H]
      \centering
      \subcaptionbox{The butterfly\label{fig:Butterfly}}
        {\includegraphics[width=0.49\textwidth]{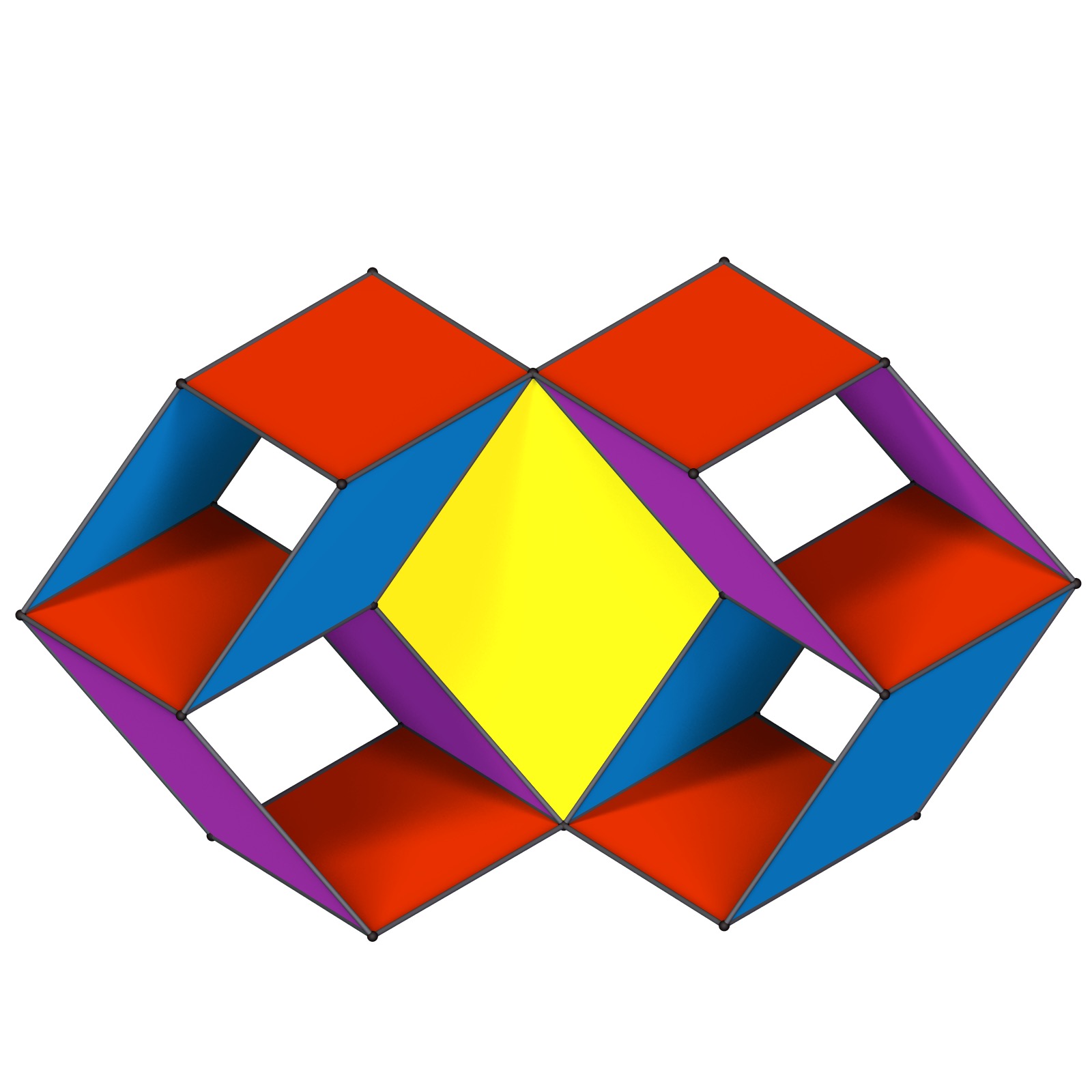}}
      \subcaptionbox{A linked dodecahedron\label{fig:ButterflyDodeca}}
        {\includegraphics[width=0.49\textwidth]{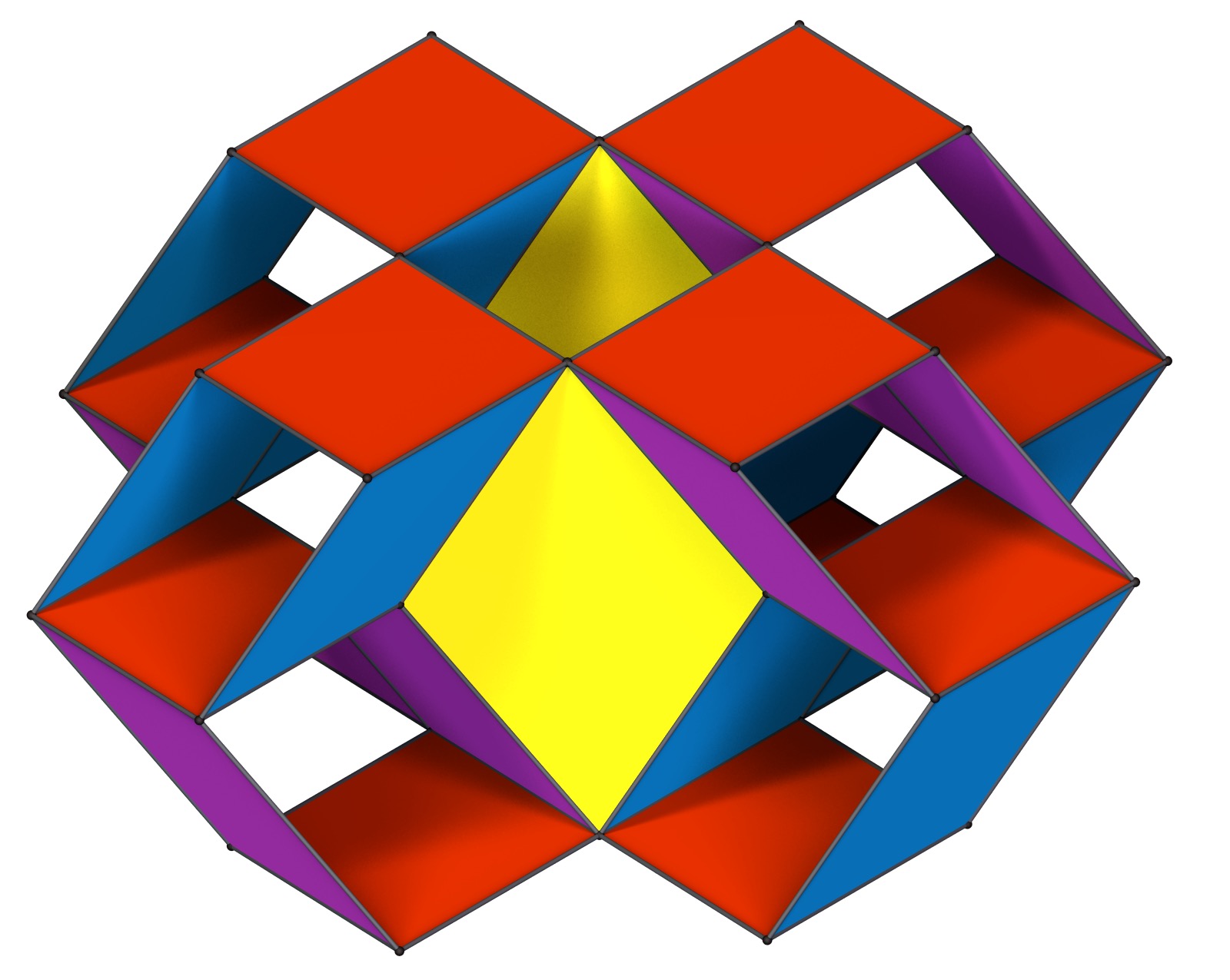}}
      \caption{The triply periodic butterfly}
      \label{fig:triplebutter}
    \end{figure}
    
 Translating the butterfly by $v_1-v_3$ leaves room for a standard dodecahedron between the four links.

The butterfly can be translated further to create the following triply periodic example:
\begin{figure}[H]
   \centering
   \includegraphics[width=4in]{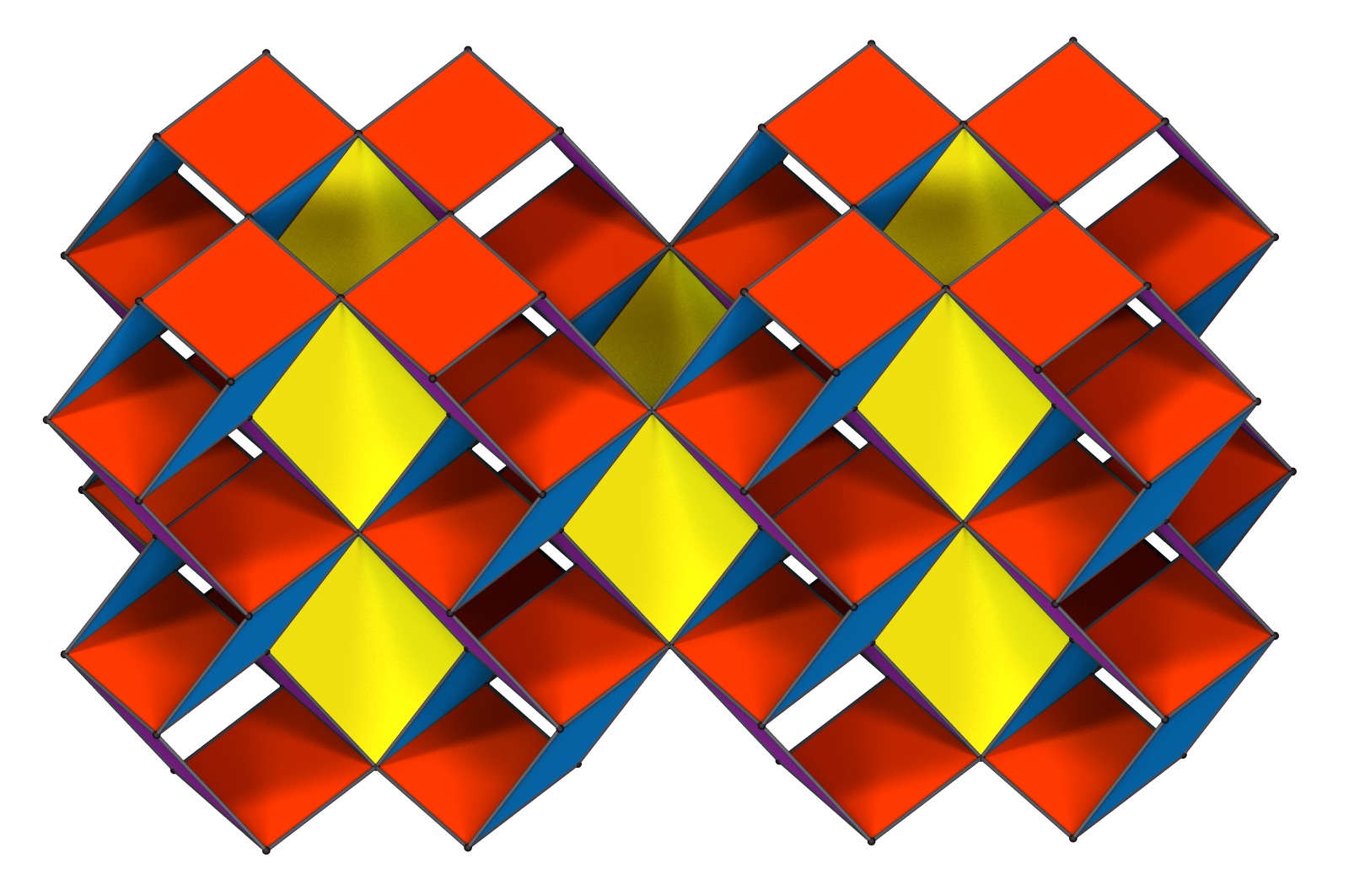}
   \caption{The triply periodic butterfly}
   \label{fig:ButterflyTriply}
\end{figure}

This surface has also genus 5. One can see this either similar as above, or by looking at the standard dodecahedra which have six facets missing. Two of them appear in the quotient, and the missing facets are used to form six handles, giving genus 5, as one of them is used to connect the dodecahedra.

Finally, in Figure \ref{fig:LinkDoubly} is a simple doubly periodic $\Sigma$-polyhedron where links are first translated into chains that are then connected by two ``snakes'' of parallelograms.

\begin{figure}[H]
   \centering
   \includegraphics[width=2.5in]{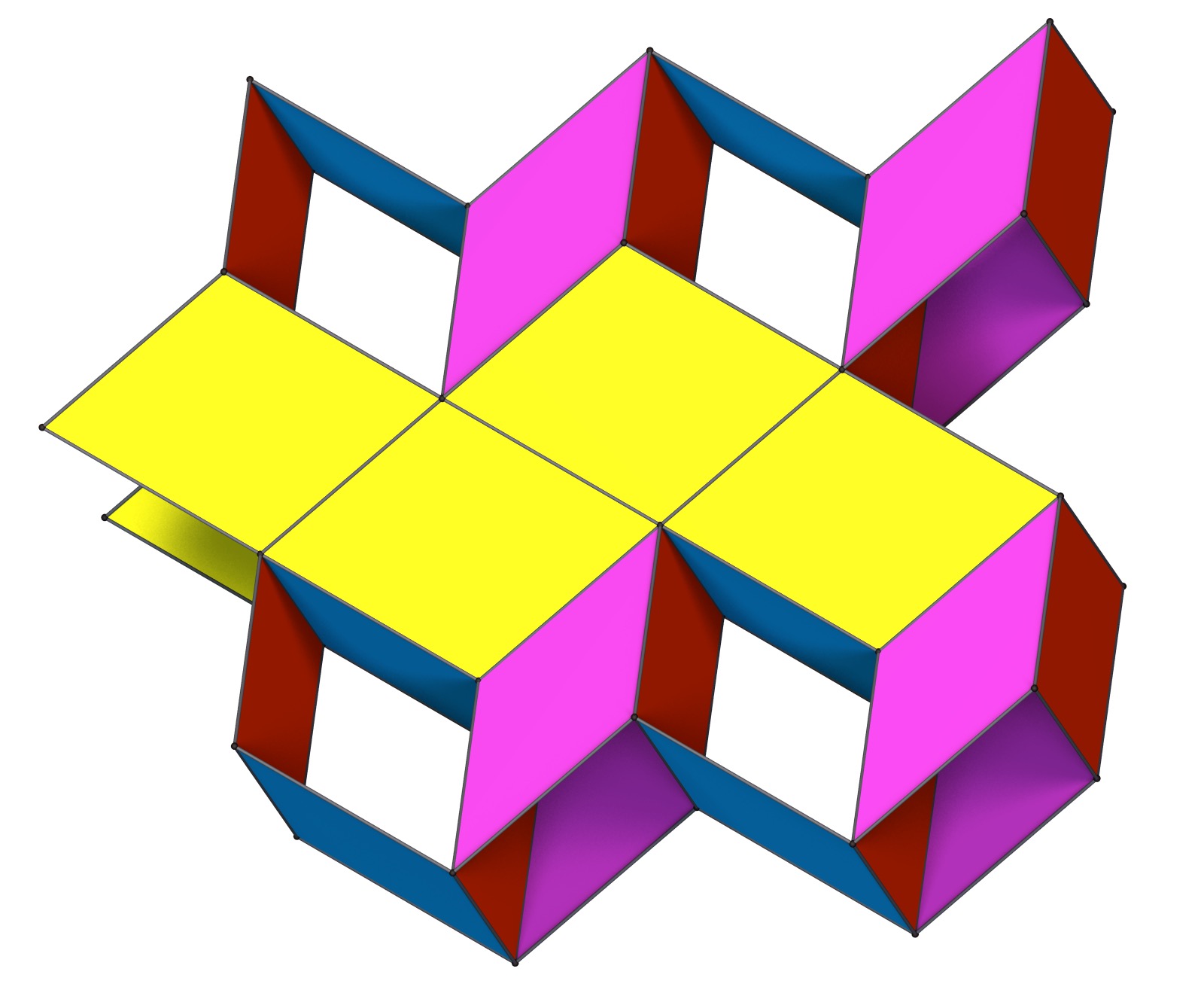}
   \caption{A doubly periodic $\Sigma$-polyhedron built with links}
   \label{fig:LinkDoubly}
\end{figure}

\section{The Dos Equis Pattern}
\label{sec:dosequis}

In this section, we construct a doubly periodic bifoldable $\Sigma$-polyhedron with boundary, called a ``layer''. Two such layers can be combined  in two different ways by parallel translation. Repeating this  allows to construct an infinitude of doubly periodic polyhedral surfaces without boundary. If the combination of layers is carried out periodically, the result will be a triply periodic $\Sigma$-polyhedron. In  its simplest form, this surface has the appearance of a triply periodic Miura pattern.

We begin with a vertex of valency eight arranged in an $X$-shape, see Figure \ref{fig:X}. On top of it we put a mirror image of the $X$. The result is clearly  translation invariant in the vertical direction, see Figure \ref{ig:dosequis}.

\begin{figure}[H]
   \centering
   \includegraphics[width=2.5in]{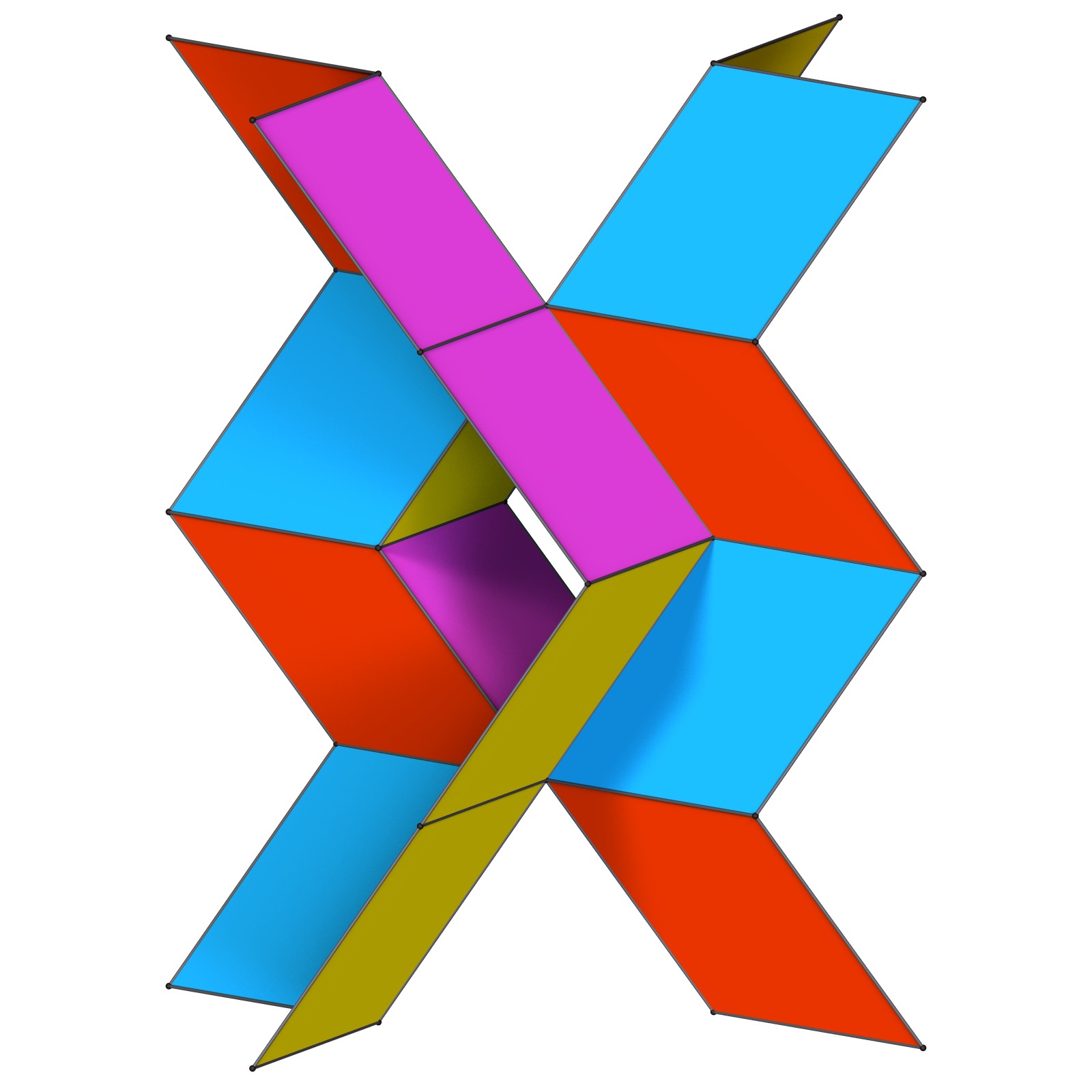}
   \caption{Two stacked X}
   \label{ig:dosequis}
\end{figure}

We add a second such double $X$ with the order of the two $X$s switched and obtain a translational fundamental piece of the doubly periodic layer.   
In Figure \ref{fig:XX3}, the translational directions are up/down and forward/backward.
\begin{figure}[H]
      \centering
      \subcaptionbox{$\alpha=24^\circ$}
        {\includegraphics[width=0.3\textwidth]{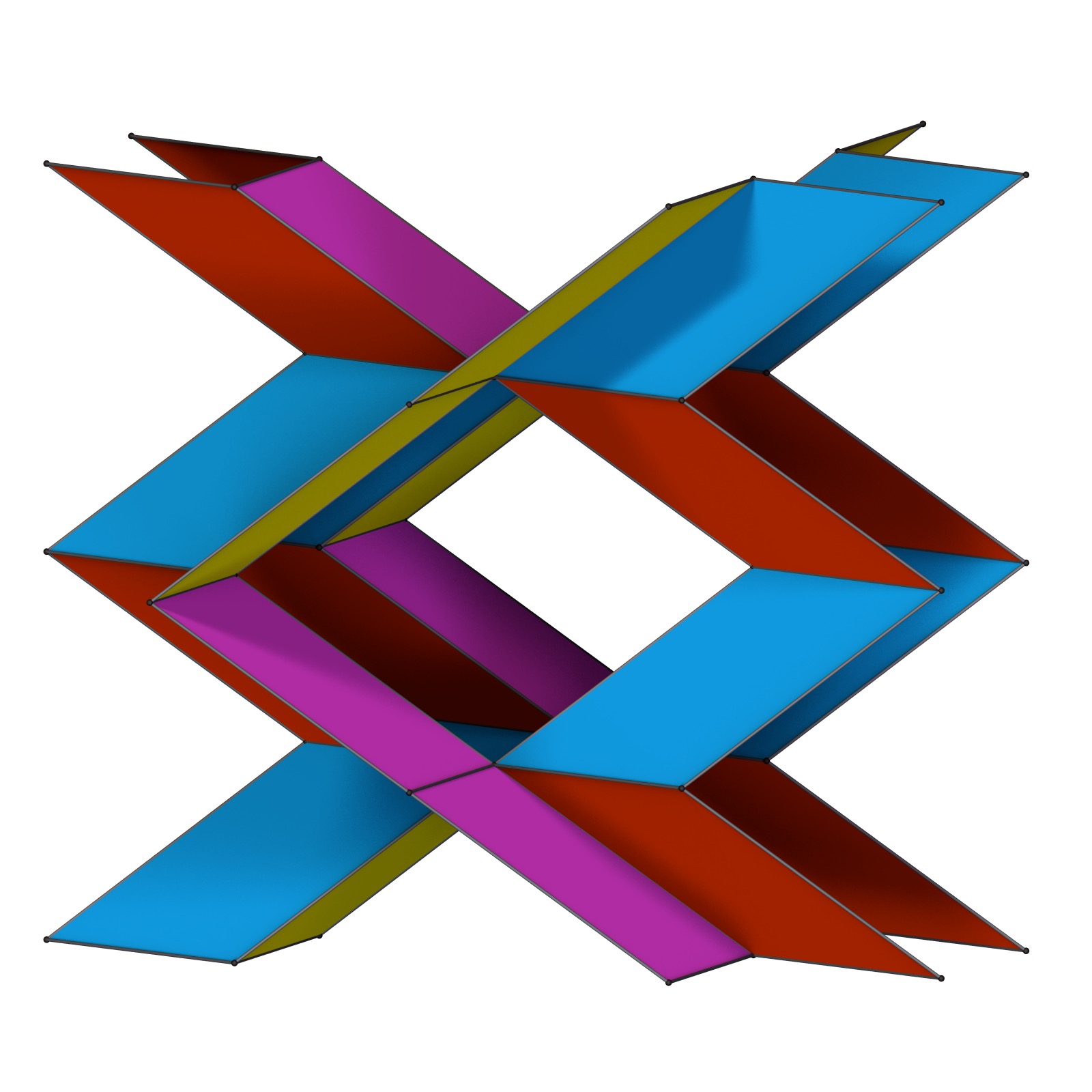}}
      \subcaptionbox{$\alpha=35^\circ$}
        {\includegraphics[width=0.3\textwidth]{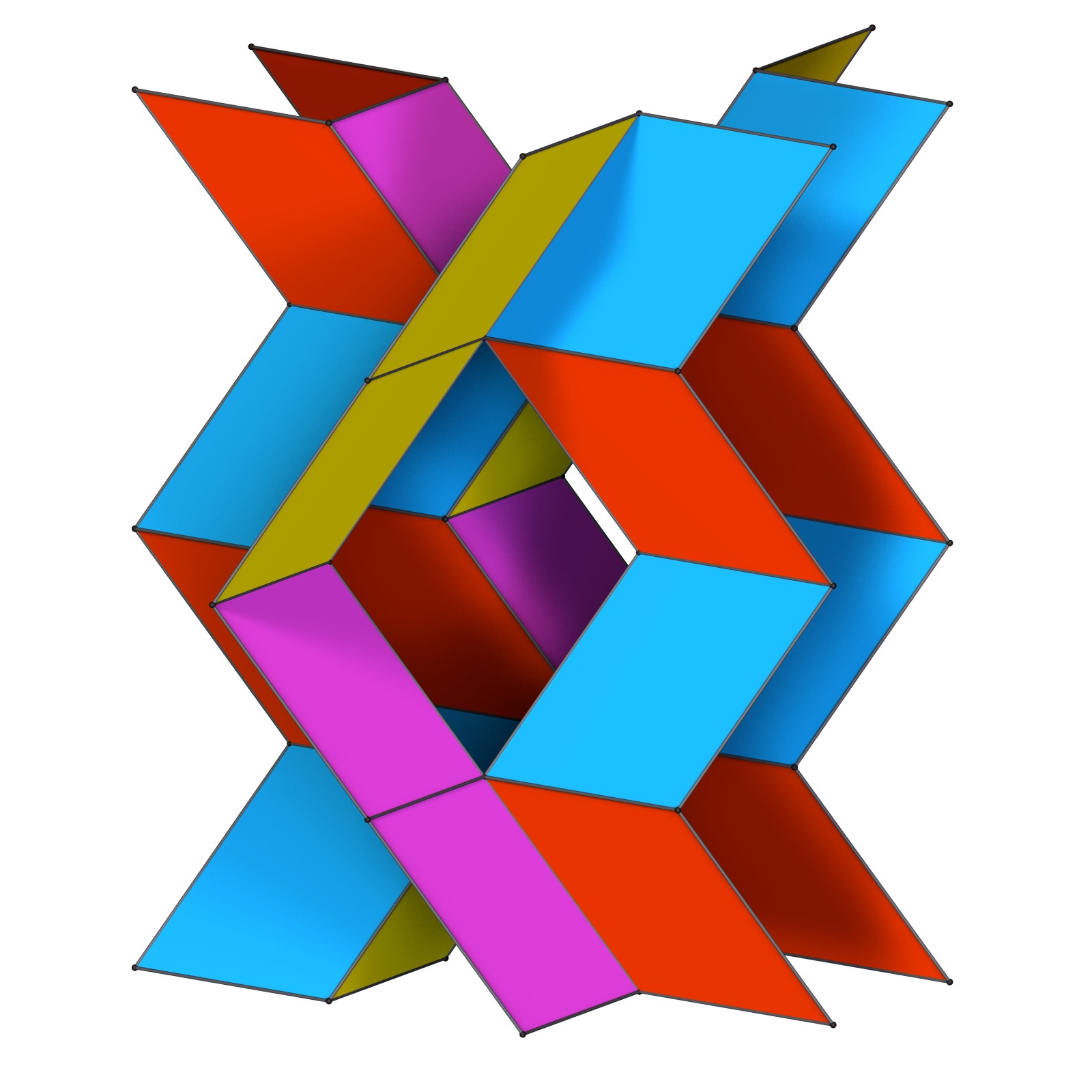}}
        \subcaptionbox{$\alpha=65^\circ$\label{fig:TripleX3carrow}}
        {\includegraphics[width=0.3\textwidth]{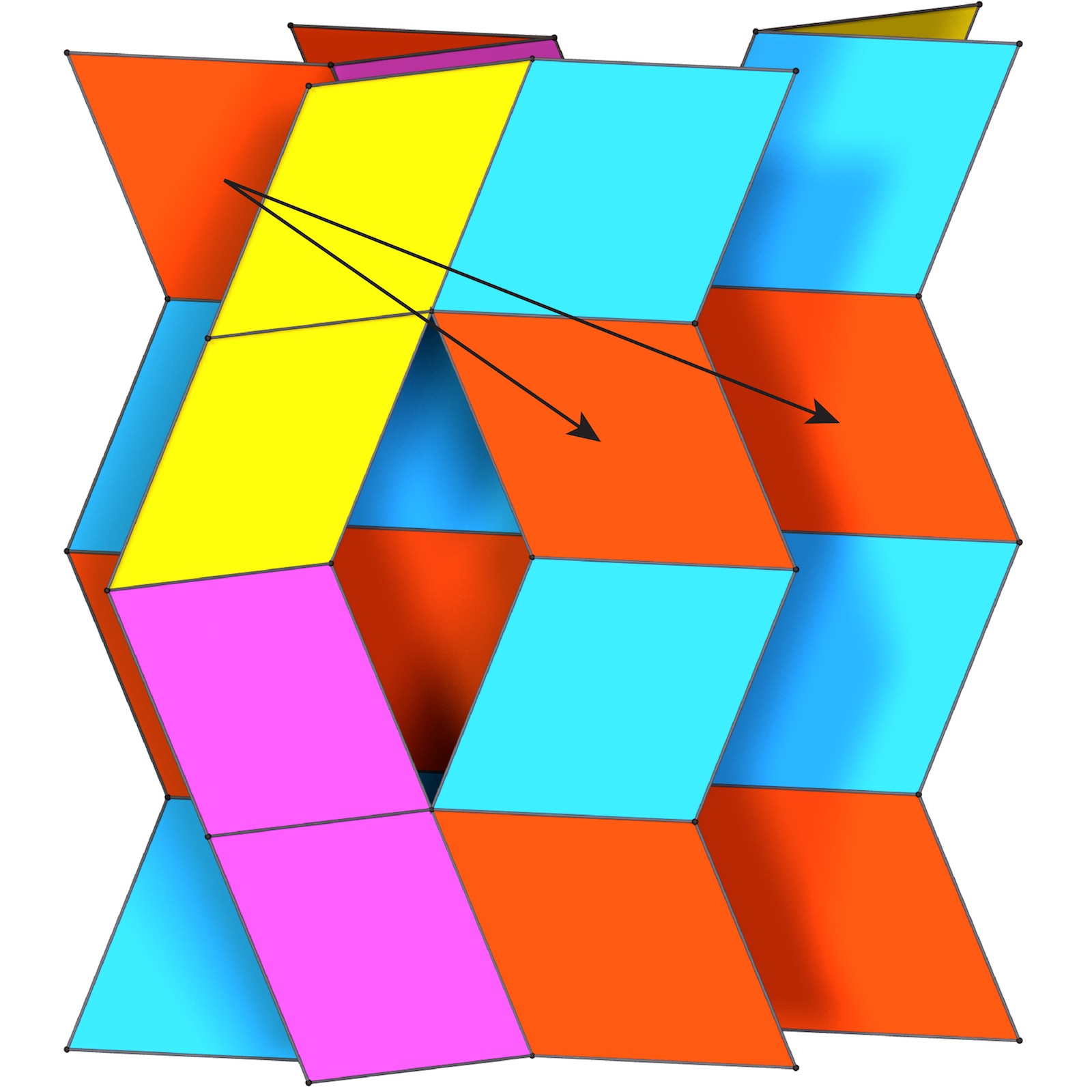}}
      \caption{Three states of the Dos Equis layer}
      \label{fig:XX3}
    \end{figure}    
    
  We see now that there are two different ways to identify one such layer with a translational copy (left/right) as indicated by the arrows in Figure \ref{fig:TripleX3carrow}.  One of these translations is orientation preserving, the other reverses orientation. Repeating this allows for an (uncountable) number of possible surfaces.

%\section{Conclusion (draft)}
%
%Open questions: 
%
%\begin{itemize}
%\item Classify singly/doubly/triply periodic $\Sigma$-polyhedra of small genus.
%\item Is there an example of a $\Sigma$-polyhedra that exists only for a short time, i.e. self-obstructs before collapsing into a plane?
%\item What are the possibilities for local surgery besides saddle replacements?
%\item Can one classify  bifoldable polyhedra in general by their mechanics? (We have found bifoldable polyhedra that are not $\Sigma$-polyhedra.)
%\end{itemize}

\bibliography{foldings}
\bibliographystyle{alpha}

\end{document}